\documentclass[10pt]{article}

\usepackage[english]{babel}
\usepackage[utf8]{inputenc}
\usepackage[T1]{fontenc}

\usepackage{amsmath, amsthm, amssymb, mathtools}

\usepackage[shortlabels]{enumitem}

\usepackage{cases}

\usepackage{sectsty}

\usepackage[a4paper]{geometry}

\usepackage[displaymath, mathlines]{lineno}

\AtBeginDocument{\def\MR#1{}}

\usepackage{hyperref}

\providecommand{\keywords}[1]{\textbf{Keywords.} #1}
\providecommand{\MSC}[1]{\textbf{2010 Mathematics Subject Classification.} #1}

\newtheorem{theorem}{Theorem}[section]
\newtheorem{corollary}[theorem]{Corollary}
\newtheorem{lemma}[theorem]{Lemma}
\newtheorem{proposition}[theorem]{Proposition}
\newtheorem{example}[theorem]{Example}

\theoremstyle{definition}
\newtheorem{definition}[theorem]{Definition}
\newtheorem{remark}[theorem]{Remark}

\usepackage{tikz}
\usetikzlibrary{positioning}
\usetikzlibrary{decorations.pathreplacing}
\usepackage{caption}

\renewcommand\epsilon{\varepsilon}
\renewcommand\mapsto{\longmapsto}

\usepackage{color}

\usepackage{xspace}

\newcommand{\R}{\field{R}\xspace}

\newcommand{\N}{\field{N}\xspace}

\newcommand{\field}[1]{\ensuremath{\mathbb{#1}}}

\newcommand{\ens}[1]{ \left\{#1\right\} }

\newcommand\diag{\mathrm{diag} \,}

\newcommand{\Tinf}{T_{\mathrm{inf}}}

\newcommand\pt[1]{\frac{\partial #1}{\partial t}}
\newcommand\px[1]{\frac{\partial #1}{\partial x}}
\newcommand\pxx[1]{\frac{\partial^2 #1}{\partial x^2}}
\newcommand\pxi[1]{\frac{\partial #1}{\partial \xi}}

\newcommand\pxixi[1]{\frac{\partial^2 #1}{\partial \xi^2}}

\newcommand\Tau{\mathcal{T}}

\newcommand{\rank}{\mathrm{rank} \,}
\newcommand{\Span}{\mathrm{span} \,}

\newcommand\Id{\mathrm{Id}}

\def\norm#1{\left\|#1\right\|}
\newcommand\abs[1]{\left|#1\right|}
\newcommand\st{\quad \middle| \quad}

\newcommand{\ps}[3]{ {\left\langle #1 , #2 \right\rangle}_{#3} }

\newcommand\dds{\frac{d}{ds}}

\usepackage{ifthen}
\newcommand{\syst}[2]{
\ifthenelse{\equal{#2}{}}{\left(\Lambda,#1,Q\right)}
{\ifthenelse{\equal{#2}{b}}{\left(\Lambda,-,Q,#1\right)}{}}
{\ifthenelse{\equal{#2}{c}}{\left(\Lambda,-,Q^0,#1\right)}{}}
}

\newcommand\clos[1]{\overline{#1}}

\newcommand\newc{\bar{c}}
\newcommand\newr{\bar{r}}

\newcommand\kal[1]{\mathrm{Kal}\left(#1\right)}

\newcommand\opP{\mathcal{P}}
\newcommand\opQ{\mathcal{Q}}

\newcommand\newm{\alpha}
\newcommand\newq{\beta}

\newcommand\Mset{\R^{n \times n}_0}

\newcommand\newk{k^{\mathfrak{c}}}
\newcommand\newn{n^{\mathfrak{c}}}
\newcommand\seq{\psi}
\newcommand\seqset{(\seq^r )_{r \in \N}}

\newcommand\knoi{k^{\mathfrak{c}}}
\newcommand\fnoi{f^{\mathfrak{c}}}
\newcommand\sigmanoi{\sigma^{\mathfrak{c}}}
\newcommand\zetanoi{\zeta^{\mathfrak{c}}}
\newcommand\Mnoi{M^{\mathfrak{c}}}
\newcommand\mnoi{m^{\mathfrak{c}}}

\newcommand\tr{{\mathsf{T}}}

\newcommand\qshift{\hat{q}}

\newcommand\Ezero{E_0}
\newcommand\Eone{E_1}
\newcommand\Etwo{E_2}

\newcommand\lambdan{\bar{\lambda}}
\newcommand\mn{\bar{m}}

\title{Minimal null control time of some 1D linear hyperbolic balance laws with constant coefficients and properties of related kernel equations}

\author{
Long Hu\thanks{School of Mathematics, Shandong University, Jinan, Shandong 250100, China.  E-mail: \texttt{hul@sdu.edu.cn}}
\and
Guillaume Olive\thanks{Faculty of Mathematics and Computer Science, Jagiellonian University, ul. {\L}ojasiewicza 6, 30-348 Krak\'{o}w, Poland. E-mail: \texttt{math.golive@gmail.com} or \texttt{guillaume.olive@uj.edu.pl}}
}

\date{}

\begin{document}

\maketitle

\begin{abstract}
In this work, we study the null controllability by one-sided boundary controls of one-dimensional hyperbolic balance laws with constant coefficients.
Our first result shows that, when the system has only one negative or positive speed, the minimal null control time of such systems depends on some orthogonality conditions for a particular sequence.
This sequence is explicit in function of the coefficients of the system but it is defined by a nonlinear recurrence relation.
Our second result then completes the previous one by giving explicit bounds on the number of orthogonality conditions that have to be checked in two nontrivial situations.
The proofs rely on a careful analysis of the so-called kernel equations associated with the system, including a new well-posedness result.
Our results are also valid for the finite-time stabilization property.
\end{abstract}

\keywords{Hyperbolic systems; Minimal control time; Backstepping method; Nonlinear recurrence relation}

\vspace{0.2cm}
\MSC{35L40, 93B05, 93D15, 11B}

\section{Introduction and main results}

\subsection{Problem description}

In this paper, we are interested in the null controllability properties of a class of one-dimensional (1D) hyperbolic system with constant coefficients (see e.g. \cite[Chap. 1]{BC16} for applications).
The equations describing such phenomenons are
\begin{subequations}\label{syst}
\begin{equation}\label{syst:equ}
\pt{y}(t,x)+\Lambda \px{y}(t,x)=My(t,x).
\end{equation}
Above, $t \in (0,T)$ is the time variable, $T>0$, $x \in (0,1)$ is the space variable and the state is $y:(0,T) \times (0,1) \to \R^n$ $(n \geq 2$).
The matrix $\Lambda \in \R^{n \times n}$ will always be assumed diagonal $\Lambda =\diag(\lambda_1,\ldots,\lambda_n)$, with $m \geq 1$ negative speeds and $p \geq 1$ positive speeds ($m+p=n$):
\begin{equation}\label{hyp speeds}
\lambda_1<\cdots<\lambda_m <0<\lambda_{m+1}<\cdots<\lambda_{m+p}.
\end{equation}
The matrix $M \in \R^{n \times n}$ couples the equations of the system inside the domain and will be called the internal coupling matrix.
We will consider an initial condition at time $t=0$:
\begin{equation}\label{syst:ID}
y(0,x)=y^0(x).
\end{equation}

Let us now discuss the boundary conditions.
The structure of $\Lambda$ induces a natural splitting of the state into components corresponding to negative and positive speeds, denoted respectively by $y_-$ and $y_+$.
For the above system to be well-posed in $(0,T) \times (0,1)$ with an initial condition at time $t=0$, we then need to add boundary conditions at $x=1$ for $y_-$ and at $x=0$ for $y_+$.
We will consider the following type of boundary conditions, motivated by its numerous applications (see again \cite{BC16}):
\begin{equation}\label{syst:BC}
y_-(t,1)=u(t), \quad y_+(t,0)=Qy_-(t,0).
\end{equation}
The function $u$ is the so-called control, it will be at our disposal.
It only acts on one part of the boundary and, on the other part of the boundary, the equations are coupled by the matrix $Q \in \R^{p \times m}$. This matrix will be called the boundary coupling matrix.
\end{subequations}
In what follows, \eqref{syst:equ}, \eqref{syst:ID} and \eqref{syst:BC} together will be referred to as system \eqref{syst}.

We recall that system \eqref{syst} is well-posed in the following functional setting: for every $T>0$, $y^0 \in L^2(0,1)^n$ and $u \in L^2(0,T)^m$, there exists a unique solution $y$ to system \eqref{syst} with
$$
y \in C^0([0,T];L^2(0,1)^n) \cap C^0([0,1];L^2(0,T)^n).
$$
By solution we mean ``solution along the characteristics''.
We refer for instance to \cite{CHOS21} for a proof of this well-posedness result in such a setting (see also \cite[Appendix A]{BC16} when $u=0$).

The regularity $C^0([0,T];L^2(0,1)^n)$ of the solution allows us to consider control problems in the space $L^2(0,1)^n$:

\begin{definition}
Let $T>0$ be fixed.
We say that system \eqref{syst} is null controllable in time $T$ if, for every $y^0 \in L^2(0,1)^n$, there exists $u \in L^2(0,T)^m$ such that the corresponding solution $y$ to system \eqref{syst} satisfies
$$y(T,\cdot)=0.$$
\end{definition}

Since controllability in time $T_1$ implies controllability in any time $T_2 \geq T_1$, it is natural to try to find the smallest possible control time, the so-called ``minimal control time''.

\begin{definition}
For any $\Lambda,M$ and $Q$ as above, we denote by $\Tinf(\Lambda,M,Q) \in [0,+\infty]$ the minimal null control time of system \eqref{syst}, that is
\begin{equation}\label{def min cont time}
\Tinf(\Lambda,M,Q)=
\inf\ens{T>0 \st \text{System \eqref{syst} is null controllable in time $T$}}.
\end{equation}
\end{definition}

The time $\Tinf(\Lambda,M,Q)$ is named ``minimal'' null control time according to the current literature, despite it is not always a minimal element of the set.
We keep this naming here, but we use the notation with the ``inf'' to avoid eventual confusions.
The goal of this article is to characterize $\Tinf(\Lambda,M,Q)$ in some new situations.

In order to state our results and those of the literature, we need to introduce the following times:
$$
T_i=\frac{1}{-\lambda_i} \quad \text{ if } i \leq m, \quad
T_i=\frac{1}{\lambda_i} \quad \text{ if } i \geq m+1.
$$
The time $T_i$ is the time needed for the controllability of a single equation (the transport equation) with speed $\lambda_i$.
Note that the assumption \eqref{hyp speeds} implies in particular the following order relation:
\begin{equation}\label{order times}
\begin{dcases}
T_1 \leq \cdots \leq T_m, \\
T_n \leq \cdots \leq T_{m+1}.
\end{dcases}
\end{equation}

\subsection{Literature}

Here, we briefly describe the results of the literature about the null controllability of system \eqref{syst}.
All the results below are also valid for space-dependent versions of this system.

\begin{itemize}
\item
It was first proved in the celebrated survey \cite{Rus78} that system \eqref{syst} is null controllable in any time $T \geq T_{m+1}+T_m$.
A strength of this result is that it is valid for any $M$ and $Q$.
However, it was also observed in that paper that the minimal control time can be smaller than $T_{m+1}+T_m$.
Finding the minimal control time even in the simpler case $M=0$ was then left as an open problem.

\item
For $M=0$, the minimal null control time was eventually found in \cite{Wec82}.
The author gave an explicit expression of this time in terms of some indices related to $Q$.

\item
Finding the minimal null control time for arbitrary $M$ and $Q$ is still an open challenging problem.
There has been a recent resurgence on the characterization of this time.
A first result in this direction was obtained in \cite{CN19} and then completed in \cite{CN21}.
Therein, the authors introduced a class of boundary coupling matrices $Q$ for which they showed that the minimal null control time is smaller than $T_{m+1}+T_m$, whatever the internal coupling matrix $M$ is.

\item
For full row rank boundary coupling matrices ($\rank Q=p$), the minimal null control time was found in \cite{HO21-JMPA}.
In this case, it has been shown that this time is the same as for the system without internal coupling ($M=0$).

\item
For systems of $n=2$ equations, the minimal null control time was found in \cite{CVKB13} and \cite{HO21-COCV}.
In particular, it has been shown in the second reference that this time depends on the internal coupling matrix $M$ when the boundary coupling matrix is zero.
This is a feature that was not observed nor highlighted in all the other works and that the results of the present paper will also share.

\item
Finally, the smallest and largest values that the minimal null control time can take with respect to the internal coupling matrix $M$ were found in \cite{HO22-JDE}.

\end{itemize}

Other related works include for instance \cite{CHOS21,CN21-pre,MAK22} about time-dependent versions of system \eqref{syst} and \cite{Li10,LR10,Hu15,CN20,CN22} for quasilinear versions of this system (in a $C^1$ framework).

\subsection{Notations and important definitions}\label{sect nota 1}

To state the main results of this article we first need to introduce some notations.

All along this article, we denote by $A^\tr$ the transpose of a matrix $A$.
For any integer $N \geq 1$, $\R^{N \times N}_0$ denotes the set of matrices of size $N \times N$ with diagonal entries all equal to zero.
The matrix $\Id_N$ denotes the $N \times N$ identity matrix.
A matrix (or matrix-valued function) of size $N_1 \times N_2$ will simply be denoted using the corresponding lowercase letter when $N_2=1$ (e.g. $Q \in \R^{p \times m}$ will be denoted by $q \in \R^p$ when $m=1$).
The inner product of two vectors $v_1,v_2 \in \R^{n-1}$ will be denoted by $\ps{v_1}{v_2}{}$.

Let us now introduce a sequence that will play a key role throughout this paper.
For any $i \in \ens{1,\ldots,n}$, we first define the following quantities.

\begin{itemize}
\item
For every $r,j \in \ens{1,\ldots,n}$, we denote by
$$
\newm_{rj}=
\frac{m_{rj}}{\lambda_i-\lambda_j} \quad \text{ if } j \neq i,
\quad
\newm_{ri}=\frac{m_{ri}}{\lambda_i}.
$$

\item
Let then $A, D \in \R^{(n-1) \times (n-1)}$ be the matrices defined by
$$
A=(\newm_{rj})_{r,j \neq i},
\quad
D=\diag \left(\frac{\lambda_i+\lambda_j}{\lambda_i-\lambda_j}\right)_{j \neq i},
$$
and let $\seq^0, w \in \R^{n-1}$ be the vectors defined by
$$
\seq^0=(\newm_{ij})_{j \neq i},
\quad
v=\left(-\frac{1}{2}\newm_{ji}\right)_{j \neq i}.
$$

\item
Let $\seqset \subset \R^{n-1}$ be the sequence defined by $\seq^0$ and
\begin{equation}\label{def seq}
\seq^1= A^\tr\seq^0, \quad
\seq^r =
A^\tr\seq^{r-1}
+D \sum_{\ell=0}^{r-2} \ps{v}{\seq^{r-2-\ell}}{} \seq^{\ell}, \quad \forall r \geq 2.
\end{equation}

\item
Finally, for $q \in \R^{n-1}$, let $b \in \R^{n-1}$ be the nonzero vector defined by
\begin{equation}\label{def b}
b=(\newq_j)_{j \neq i}, \quad \newq=-\Lambda\begin{pmatrix} 1 \\ q \end{pmatrix}.
\end{equation}

\end{itemize}
All the above quantities depend on the index $i$ but we omit it for clarity.

With the previous notations, we introduce the following sets.
For any $k \in \ens{2,\ldots,n+1}$, we denote by $\mathcal{C}_k$ the set of $(M,q) \in \Mset \times \R^{n-1}$ such that, for every $2 \leq i<k$, we have $q_{i-1}=0$ and
\begin{equation}\label{cond seq}
\ps{b}{\seq^r}{}=0, \quad \forall r \in \N. 
\end{equation}
Here, we use the convention that $\mathcal{C}_2=\Mset \times \R^{n-1}$.
Additionally, we will denote by $\mathcal{C}_{n+2}=\emptyset$.
Note that we then have $\mathcal{C}_2 \supset \mathcal{C}_3 \supset \cdots \supset \mathcal{C}_{n+1} \supset \mathcal{C}_{n+2}$.

\subsection{Main results and comments}

The first result of this article is the following characterization of the minimal null control time in the case of one negative speed.

\begin{theorem}\label{thm M}
Assume that $m=1$.
Let us denote by
\begin{equation}\label{def times tau}
\tau_i=\max\ens{T_1+T_i,T_2} \quad \text{ if } 2 \leq i \leq n,
\quad
\tau_{n+1}=\max\ens{T_1, T_2},
\end{equation}
(we have $\tau_2 \geq \tau_3 \geq \cdots \geq \tau_{n+1}$ from \eqref{order times}).
Then, for any $M \in \Mset$ and $q \in \R^{n-1}$, we have:
\begin{enumerate}
\item
$\Tinf(\Lambda,M,q) \in \ens{\tau_2,\ldots,\tau_{n+1}}$.
Moreover, the infimum is always reached (in \eqref{def min cont time}).

\item
For any $k \in \ens{2,\ldots,n+1}$, we have
$$
\Tinf(\Lambda,M,q)=\tau_k \quad \Longleftrightarrow \quad (M,q) \in \mathcal{C}_k \setminus \mathcal{C}_{k+1}.
$$

\end{enumerate}

\end{theorem}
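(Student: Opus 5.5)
We will follow the backstepping strategy of \cite{HO21-COCV,HO22-JDE} and reduce system \eqref{syst} to a canonical form whose minimal null control time is known explicitly. The first step is a Volterra transformation of the second kind
$$
\gamma(t,x)=y(t,x)-\int_0^x K(x,\xi)y(t,\xi)\,d\xi,
$$
with kernel $K$ solving the kernel equations
$$
\Lambda \px{K}+\pxi{K}\Lambda+KM=0,
$$
complemented by suitable boundary conditions on the diagonal $\xi=x$ and on $\xi=0$. With these conditions the internal coupling $M$ is removed and replaced by a nonlocal boundary-type term $G(x)y_-(t,0)$ (a coupling of the form ``$\gamma_t+\Lambda\gamma_x=G(x)\gamma_-(t,0)$''). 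For $m=1$, $G$ is a column vector whose structure is the decisive object: we will show that $G$ can be chosen with a triangular support structure. Concretely, its $i$-th component vanishes on a region of length governed by $T_1+T_i$. Since the transformation is invertible and preserves null controllability together with its time, we get $\Tinf(\Lambda,M,q)=\Tinf$ of the target system.

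The second step is to compute the minimal time of the target system. Arguing along characteristics, with a single negative speed the control acts through $\gamma_1(t,1)$, and each $\gamma_j$ ($j\geq 2$) is fed both by $q_{j-1}\gamma_1(t,0)$ and by the nonlocal term $g_j(x)\gamma_1(t,0)$. If $q_{j-1}=0$ and $g_j\equiv 0$ for all $2\le j<k$, the first positive components are decoupled from the control loop and need only time $\max\{T_1,T_2\}$. The first index $k$ where $(q_{k-1},g_k)\neq 0$ then forces time $T_1+T_k$, by a duality/observability argument on the adjoint system (explicit counterexample initial data). This yields $\Tinf=\tau_k$, the infimum being attained, and $\Tinf\in\{\tau_2,\ldots,\tau_{n+1}\}$ by the ordering \eqref{order times}.

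The third and main step is to translate ``$g_i\equiv 0$'' back into the explicit conditions \eqref{cond seq}. For each fixed $i$, we will expand the relevant entries of the kernel $K$ near $(0,0)$ as power series, which is possible because the coefficients are constant. This requires a well-posedness result for the kernel equations with analytic (entire) solutions, which we will prove first via a Picard iteration with factorial bounds. Substituting the series, the change of variables $\xi\mapsto$ scaled by $\lambda_i$ and the diagonal boundary condition make the coefficient vectors satisfy exactly the quadratic recurrence \eqref{def seq}. Here $A$ comes from the linear term $KM$, $D$ comes from the ratios of speeds, and the convolution term comes from the boundary condition at $\xi=0$ involving $q$ and $v$. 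The quantity $g_i$ is then proportional to $\sum_r \ps{b}{\seq^r}{}\, x^r/r!$ (up to scaling). Hence $g_i\equiv0$ holds if and only if $\ps{b}{\seq^r}{}=0$ for all $r$, which is precisely the definition of $\mathcal{C}_k$ and gives item 2.

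The main obstacle is this identification. We need to derive the nonlinear recurrence correctly from the kernel equations, including the quadratic boundary interaction, and to justify convergence and uniqueness of the analytic kernel so that vanishing of $g_i$ is equivalent to vanishing of all its Taylor coefficients. We will first work out the case $n=2$ against \cite{HO21-COCV} as a consistency check, and then argue by induction on $i$ using $q_{j-1}=0$ for $j<i$ to decouple the earlier components.
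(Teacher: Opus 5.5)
Your architecture matches the paper's: backstepping to $\gamma_t+\Lambda\gamma_x=G(x)\gamma_1(t,0)$, the $m=1$ characterization of \cite{HO21-COCV}, a kernel analytic at the origin, and Taylor coefficients. However, the step you call the main obstacle is misdescribed, and as stated it would not produce \eqref{def seq}.

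\textbf{Where the quadratic term comes from.} It does not come from a condition at $\xi=0$ involving $q$. The sequence $\seq^r$ does not depend on $q$ at all ($q$ enters only through $b$), and $v$ is built from the $i$-th column of $M$. The quadratic term comes from the diagonal entry $k_{ii}$. Its equation is a transport along $(1,1)$, parallel to the diagonal, so it cannot be prescribed on $\xi=x$ and needs an extra condition $k_{ii}(x,\delta x)=0$ on a transversal line.

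\textbf{The missing idea} has three parts:
\begin{enumerate}
\item Eliminate $k_{ii}$ by integrating along its characteristic.
\item Rotate to $(t,\theta)=(\xi-x,\xi+x)$, so that $\newk(x,x)=\seq^0$ becomes Cauchy data. This yields \eqref{equ U}, whose nonlocal term is $\int_{-\frac{1+\delta}{1-\delta}t}^{\theta}\ps{v}{h(t,\eta)}{}\,d\eta$.
\item Choose $\delta=-1$, so that the lower limit vanishes.
\end{enumerate}
Only then do the derivatives at the origin satisfy \eqref{equ X}, and an induction gives \eqref{def seq rs}, whose $s=0$ slice is \eqref{def seq}.

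With a condition on $\xi=0$ (e.g.\ $k_{ii}(x,0)=0$, lower limit $-t$), extra terms appear already at second order. One gets $\partial_t^2h(0,0)=A^\tr\seq^1+\ps{v}{\seq^0}{}(D+\Id)\seq^0$ instead of $\seq^2=A^\tr\seq^1+\ps{v}{\seq^0}{}D\seq^0$. You would then obtain a different recurrence and have to prove separately that its orthogonality conditions coincide with \eqref{cond seq}. This is also why the paper builds the kernel on all of $\R^2$ (Theorem \ref{thm existence and analyticity}), treating the diagonal as a Cauchy line rather than a boundary.

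\textbf{The identification of $g_i$.} $g_i$ is not proportional to $\sum_r\ps{b}{\seq^r}{}x^r/r!$. For $q_{i-1}=0$ one has $g_i(x)=\ps{b}{\newk(x,0)}{}$. The derivative $\partial_x^\gamma\newk(0,0)$ equals $\pm\seq^\gamma$ plus binomial multiples of $\seq_{\gamma-s,s}$ with $s\geq 1$, and these do not vanish in general; for instance $\partial_x^2\newk(0,0)=\seq^2-2\ps{v}{\seq^0}{}\seq^0$. The equivalence $g_i\equiv 0 \Longleftrightarrow$ \eqref{cond seq} holds only because, by \eqref{def seq rs}, each $\seq_{r,s}$ with $s\geq1$ is a combination of $\seq^\ell$ with $\ell\leq r-s$. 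An induction on $\gamma$ is therefore needed.

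\textbf{The ``triangular support structure'' of step 1 should be dropped.} $G$ is not something you can shape. If $g_k$ vanished on some $(0,\ell)$ without vanishing identically, \cite{HO21-COCV} would give the time $T_1+T_k(1-\ell)$, which is not among the $\tau_j$. It is precisely analyticity of $g_k$ at $x=0$ that excludes this and makes your necessity argument in step 2 work: differentiate the Volterra equation until it becomes of the second kind.

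\textbf{No induction on $i$ is needed.} The kernel equations do not couple different rows of $K$, so each $i$ is treated separately, with all entries of $M$ still involved.
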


We recall that the set $\mathcal{C}_k$ is defined at the end of Section \ref{sect nota 1}.

\begin{remark}
Theorem \ref{thm M} solves the open problem raised at the end of \cite[Section 5]{HO21-COCV} for systems with constant coefficients.
\end{remark}

\begin{remark}
Theorem \ref{thm M} remains valid if we replace everywhere in this article the null controllability property by the finite-time stabilization property by $L^2$ bounded feedbacks (that is when the control $u$ is looked under the more particular form $u(t)=\int_0^1 r(\xi)^\tr y(t,\xi) \, d\xi$ with $r \in L^2(0,1)^n$).
This easily follows from the proof below.
\end{remark}

Even though the set $\mathcal{C}_k$ is explicit in function of the parameters of the system, the orthogonality conditions \eqref{cond seq} that define this set can be difficult to study in general because the sequence $\seqset$ is defined by a nonlinear recurrence relation.
Note however that there always exists an integer $N \geq 1$ such that
$$\seq^r \in \Span \ens{\seq^s \st s \leq N-1}, \quad \forall r \geq N,$$
so that
\begin{equation}\label{cond seq N}
\ps{b}{\seq^r}{}=0, \quad \forall r \in \N
\quad \Longleftrightarrow \quad
\ps{b}{\seq^r}{}=0, \quad \forall r \in \ens{0,\ldots,N-1}.
\end{equation}
This means that \eqref{cond seq} only needs to be checked for the first $N$ values of $r$.
However, such a $N$ depends on the sequence and it is a priori unknown, so that, in practice, we do not know when we have to stop checking the orthogonality conditions.
Our second result provides information on this issue in two particular cases:

\begin{theorem}\label{thm bounds Nseq}
Let $i \in \ens{1,\ldots,n}$ be fixed.
Define
$$
N_\seq=\min \ens{N \geq 1 \st \text{\eqref{cond seq N} holds}}.
$$
We have $N_\seq \leq 3$ for $n=3$ and $N_\seq \leq 6$ for $n=4$.
\end{theorem}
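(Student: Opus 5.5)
The approach is to bound the dimension growth of the nested subspaces
$$V_N=\Span\ens{\seq^0,\ldots,\seq^{N-1}} \subset \R^{n-1}.$$
It suffices to show that $V_N=V_{N'}$ for all $N' \geq N$ with $N=3$ (resp. $N=6$) when $n=3$ (resp. $n=4$). Then $\ps{b}{\seq^r}{}=0$ for all $r$ if and only if this holds for $r \leq N-1$.

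Because the recurrence \eqref{def seq} is nonlinear, a single step with $V_{N+1}=V_N$ does not by itself force stagnation forever. I would therefore first prove two \emph{invariance lemmas}.

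\textbf{(L1)} If $W$ is a subspace with $\seq^0 \in W$, $A^\tr W\subset W$ and $DW \subset W$, then $\seq^r\in W$ for every $r$. This follows by strong induction, since each term of \eqref{def seq} is $A^\tr$ or $D$ applied to an element of $W$, times a scalar.

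\textbf{(L2)} If $\seq^0\in W$, $A^\tr W\subset W$ and $v\perp W$, then all the scalars $\ps{v}{\seq^s}{}$ vanish. Hence $\seq^r=(A^\tr)^r\seq^0\in W$ for all $r$.

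Two further observations will be used. First, if $\seq^0=0$, then every term of the recurrence contains a factor $\seq^\ell$, so the whole sequence vanishes. Second, the diagonal entries of $D$ are pairwise distinct, because $\lambda\mapsto(\lambda_i+\lambda)/(\lambda_i-\lambda)$ is injective and the speeds are distinct. Consequently, the only $D$-invariant subspaces are coordinate subspaces. It may also help to use the generating function $\Psi(z)=\sum_r\seq^rz^r$, which satisfies
$$\Psi=\seq^0+zA^\tr\Psi+z^2\ps{v}{\Psi}{}D\Psi,$$
to organise the induction.

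\textbf{Case $n=3$ (so $\R^2$).} If $\seq^0=0$, then $N_\seq=1$. If $\seq^0,\seq^1$ are independent, then $V_2=\R^2$ and $N_\seq\leq 2$. Otherwise $\seq^1=c\,\seq^0$, so $\seq^0$ is an eigenvector of $A^\tr$, and
$$\seq^2=c^2\seq^0+\ps{v}{\seq^0}{}D\seq^0.$$
Either $\seq^2\notin V_1$, in which case $V_3=\R^2$; or $\seq^2\in V_1$. In the latter case, either $\ps{v}{\seq^0}{}=0$ and (L2) applies with $W=V_1$, or $D\seq^0\in V_1$ and (L1) applies with $W=V_1$. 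In all cases $N_\seq\leq 3$.

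\textbf{Case $n=4$ (so $\R^3$).} The same scheme gives a finite case tree according to when $\dim V_k$ increases. The worst scenario is a stall at dimension $1$ or $2$ that lasts a few steps before growth resumes.

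For a stall at dimension $1$, the $n=3$ argument applies verbatim and shows that the stall is permanent after one extra step.

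For a stall at dimension $2$, let $W=V_k$ with $\dim W=2$ and suppose $\seq^k,\seq^{k+1},\seq^{k+2}\in W$. I would write out these three memberships and extract linear conditions. The goal is to conclude that either $A^\tr W\subset W$ and $v\perp W$, or $A^\tr W\subset W$ and $W$ is a $D$-invariant coordinate plane, or a mixed situation in which the nonzero scalars $\ps{v}{\seq^s}{}$ multiply only vectors $D\seq^\ell$ that lie in $W$. The mixed situation would be handled by a refined version of (L1)/(L2) in which $D$-invariance is required only on $W\cap v^{\perp\perp}$-type pieces.

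Counting the steps in the longest admissible chain should give the bound $6$. One configuration to track is $\seq^0,\seq^1$ parallel, then growth to dimension $2$ at $\seq^2$, followed by a stall at dimension $2$ that must be probed for three steps before concluding. Any other order of events gives fewer steps.

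\textbf{Main obstacle.} The hard step is the dimension-$2$ stall analysis for $n=4$. Here $A^\tr$-invariance of $W$ is not immediate, since $\seq^{k}-A^\tr\seq^{k-1}$ is a nonlinear correction. I would first try to show that two consecutive memberships already force $A^\tr W\subset W$ modulo the span of $DW$. Then I would use the distinct entries of $D$ to split into the cases where $W$ is a coordinate plane and where it is not. In the non-coordinate case I would argue that $\ps{v}{\cdot}{}$ must vanish on the relevant vectors, which reduces the problem to (L2).
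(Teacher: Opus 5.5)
Your reduction (showing that $\Span\ens{\seq^0,\ldots,\seq^{N-1}}$ already contains the whole sequence) is sound. (L1)--(L2) are Proposition~\ref{prop inv sub}, and your $n=3$ argument is exactly the paper's (Proposition~\ref{thm caract E1}). The gap is in the $n=4$ case, specifically the dimension-two stall. That is the whole difficulty, and you only state a goal for it. Moreover, that goal cannot be reached.

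The hard configuration is $\seq^0,\seq^1$ linearly independent with $W=\Eone$. There the sequence can stay in $\Eone$ forever even though $\Eone$ has none of the structures you aim for. In Example~\ref{example p=3} with $i=2$:
\begin{itemize}
\item $A^\tr\seq^1=(-\tfrac32,0,-\tfrac12)^\tr\notin\Eone$;
\item $D\seq^0=(0,0,-2)^\tr\notin\Eone$;
\item $\ps{v}{\seq^0}{}=\tfrac12\neq0$.
\end{itemize}
So a nonzero scalar multiplies a vector $D\seq^0\notin W$, and $\seq^2=-\tfrac32\seq^0$ lies in $W$ only because this term cancels against $A^\tr\seq^1\notin W$. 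Each of your target outcomes presupposes either $A^\tr W\subset W$ or that the $D$-terms stay in $W$, and both fail here, so no refined (L1)/(L2) applies. Nor can $\ps{v}{\cdot}{}$ be forced to vanish in the non-coordinate case. The paper shows the opposite: once $\Eone$ satisfies neither \eqref{first E} nor \eqref{second E}, necessarily $\ps{v}{\seq^0}{}\neq0$.

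What is missing is the coefficient analysis of Proposition~\ref{thm caract E2}:
\begin{enumerate}
\item Write $\seq^r=\alpha_r\seq^0+\beta_r\seq^1$ and eliminate $A^\tr\seq^1$ via \eqref{E2 cond r2}.
\item Turn each membership $\seq^r\in\Eone$ into $(\seq^0|\seq^1)u_r=(D\seq^0|D\seq^1)f_{r-1}$.
\item Use $D\Eone\not\subset\Eone$ to force every $f_{r-1}$ to be collinear with $f_2$. This yields $\ps{v}{\seq^0}{}\neq0$, an explicit scalar recurrence for $(\alpha_r,\beta_r)$, and the compatibility condition \eqref{cond beta}.
\item Show that the conditions for $r=4$ and $r=5$ pin down $(\alpha_3,\beta_3)$, after which an induction gives \eqref{cond beta} for all $r$.
\end{enumerate}

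Your step count is also wrong. In the configuration you single out ($\seq^1\parallel\seq^0$, growth at $\seq^2$), invariance does work and only two probes are needed. Indeed $\seq^3\in\Etwo$ gives $A^\tr\Etwo\subset\Etwo$, and $\seq^4\in\Etwo$ gives $D\Etwo\subset\Etwo$ (Proposition~\ref{prop rank 2 basic i1}). So that case only needs $N=5$.

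The bound $6$ comes from the independent case, which needs the four memberships $\seq^2,\ldots,\seq^5$. Three are not enough, so your claim that three consecutive memberships make a dimension-two stall permanent is false for $k=2$. Here is a counterexample:
\begin{itemize}
\item take $D$ diagonal with distinct entries $d_1,d_2,d_3$, and $g=(1,1,1)^\tr$;
\item set $\seq^0=Dg$, and pick $v$ with $\ps{v}{g}{}=0$ and $\ps{v}{Dg}{}=1$;
\item let $A^\tr$ be the zero-diagonal matrix with $A^\tr Dg=g$ and $A^\tr g=-D^2g$. Each row is solvable because the $2\times2$ minors of $(Dg|g)$ are $d_j-d_k\neq0$.
\end{itemize}
Then $\seq^1=g$, $\seq^2=0$, $\seq^3=\seq^0$ and $\seq^4=\seq^1$, but $\seq^5=D^2g\notin\Eone$ by the Vandermonde determinant. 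Hence, for $b$ orthogonal to $\Eone$, the first five orthogonality conditions hold and the sixth fails.
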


\begin{remark}
It would be interesting to find a bound of $N_\seq$ with respect to $n$ for arbitrary $n$.
\end{remark}

Our main results can for instance be combined to deduce a very explicit characterization of the minimal null control time in the following particular case:

\begin{corollary}
Assume that $m=1$ and $p=2$.
Then, for any $M \in \R^{3 \times 3}_0$ and $q \in \R^2$, we have:
\begin{enumerate}
\item
$\Tinf(\Lambda,M,q)=\max\ens{T_1,T_2}$ if, and only if, $(M,q)$ satisfies
$$
q=0,
\quad
m_{21}=m_{31}=0.
$$

\item
$\Tinf(\Lambda,M,q)=\max\ens{T_1+T_3,T_2}$ if, and only if, $(M,q)$ satisfies
$$
q=0, \quad m_{21}=m_{23}=0, \quad m_{31} \neq 0,
$$
or
$$
q_1=0, \quad q_2 \neq 0
\quad \text{ and } \quad
\left(
m_{21}=m_{23}=0
\quad \text{ or } \quad
\begin{dcases}
m_{21}=rs m_{23}, \\
m_{31}= r^2 s m_{13}, \\
m_{32}= -r m_{12},
\end{dcases}
\right),
$$
where $r=-\frac{\lambda_3 q_2}{\lambda_1}$ and $s=\frac{\lambda_2-\lambda_1}{\lambda_2-\lambda_3}$.

\item
In all the other situations, $\Tinf(\Lambda,M,q)=T_1+T_2$.

\end{enumerate}
\end{corollary}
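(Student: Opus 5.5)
The plan is to derive the corollary from Theorem~\ref{thm M} with $n=3$. Theorem~\ref{thm bounds Nseq} will let us reduce each infinite family of orthogonality conditions \eqref{cond seq} to the three conditions $r=0,1,2$. For $n=3$ the times are
$$\tau_2=T_1+T_2,\qquad \tau_3=\max\{T_1+T_3,T_2\},\qquad \tau_4=\max\{T_1,T_2\}.$$
So it suffices to compute $\mathcal{C}_3$ and $\mathcal{C}_4$ explicitly: item 1 is $\mathcal{C}_4$, item 2 is $\mathcal{C}_3\setminus\mathcal{C}_4$, and item 3 is the complement of $\mathcal{C}_3$. By definition:
\begin{itemize}
\item $\mathcal{C}_3$ is given by $q_1=0$ together with \eqref{cond seq} for the sequence built with $i=2$;
\item $\mathcal{C}_4$ adds $q_2=0$ and \eqref{cond seq} for the sequence built with $i=3$.
\end{itemize}

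\textbf{Two general observations.} First, if $\seq^0=0$ then $\seq^r=0$ for all $r$. This follows by strong induction from \eqref{def seq}, because every term on the right-hand side is linear in some $\seq^\ell$ with $\ell<r$. Second, $b\in\R^2$ and $b\neq 0$. Hence if $\seq^0\neq 0$ and $\ps{b}{\seq^0}{}=0$, then $b^\perp=\Span\{\seq^0\}$. The conditions for $r=1,2$ then say exactly that $\seq^1$ and $\seq^2$ are multiples of $\seq^0$. Since $M\in\Mset$, the diagonal entries of $A$ vanish, and the computations of $\seq^1=A^\tr\seq^0$ and of $\seq^2$ stay short.

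\textbf{Case $i=2$, $q_1=0$.} Here $b=(-\lambda_1,-\lambda_3q_2)$ and $\seq^0=\bigl(\frac{m_{21}}{\lambda_2-\lambda_1},\frac{m_{23}}{\lambda_2-\lambda_3}\bigr)$. The condition $r=0$ reads $m_{21}=rs\,m_{23}$ with $r,s$ as in the statement.
\begin{itemize}
\item If $m_{21}=m_{23}=0$, then $\seq^0=0$ and the whole family holds.
\item If $q_2\neq 0$ and $m_{23}\neq 0$, I impose $\seq^1\parallel\seq^0$. This should give, after cancelling $m_{23}$, the relation $m_{32}=-r\,m_{12}$ (up to checking the algebra). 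I then impose $\seq^2\parallel\seq^0$, using the quadratic term $D\ps{v}{\seq^0}{}\seq^0$. Combined with the previous relation, this should produce $m_{31}=r^2s\,m_{13}$.
\item If $q_2=0$, then $b$ is parallel to $e_1$ and the conditions are $(\seq^r)_1=0$. This forces $m_{21}=0$. For $r=1$ we get $(\seq^1)_1=\newm_{31}\newm_{23}$, so $m_{31}m_{23}=0$. The $r=2$ component should then force $m_{23}=0$ when $m_{31}\neq0$, and also when $m_{31}=0$ once combined with the $i=3$ analysis below.
\end{itemize}

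\textbf{Case $i=3$, $q=0$.} Now $b=(-\lambda_1,0)$, so the conditions are $(\seq^r)_1=0$ for the $i=3$ sequence. This gives $m_{31}=0$, and the higher conditions should be automatic or reduce to relations already imposed. Intersecting with the $i=2$ conditions at $q=0$ gives $\mathcal{C}_4=\{q=0,\ m_{21}=m_{31}=0\}$. Note that with $m_{31}=0$ the $i=2$ conditions give only $m_{21}=0$, not necessarily $m_{23}=0$, consistent with item 1. Then $\mathcal{C}_3\setminus\mathcal{C}_4$ is the union of two pieces: the $q=0$ branch with $m_{31}\neq0$, where $m_{21}=m_{23}=0$, and the $q_1=0,\ q_2\neq0$ branch. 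This yields the formulation of item 2.

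\textbf{Main obstacle.} I expect the hardest step to be the explicit computation of $\seq^2$, which contains the nonlinear term, and the resulting case analysis. There one must show that the $r=2$ condition yields exactly $m_{31}=r^2s\,m_{13}$ and nothing more, and, in the $q=0$ case, that $m_{31}m_{23}=0$ together with the $r=2$ condition reduces correctly to the stated alternatives. I would first write $\seq^1,\seq^2$ symbolically in terms of the $\newm_{rj}$, then substitute $m_{21}=rs\,m_{23}$ and factor.
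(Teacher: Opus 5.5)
Your reduction is the intended one: the paper gives no argument beyond combining Theorem~\ref{thm M} with Theorem~\ref{thm bounds Nseq} for $n=3$, and your final descriptions of $\mathcal{C}_3$ and $\mathcal{C}_4$ are correct. However, the two computations you left as ``should'' are predicted wrongly, so the case analysis as written does not stand.

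\textbf{The branch $q_1=0$, $q_2\neq0$, $m_{23}\neq0$.} Here you have swapped the roles of $r=1$ and $r=2$. For $i=2$ (ordering $j=1,3$) we have $A^\tr=\begin{pmatrix}0&\alpha_{31}\\ \alpha_{13}&0\end{pmatrix}$. So $\psi^1=(\alpha_{31}\alpha_{23},\,\alpha_{13}\alpha_{21})$ cannot involve $m_{12}$ or $m_{32}$, which enter only through $v$. Write $m_{21}=rs\,m_{23}$, equivalently $\alpha_{21}=r\alpha_{23}$. Then $\langle b,\psi^1\rangle=0$ is exactly $m_{31}=r^2s\,m_{13}$.

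Next, $(A^\tr)^2=\alpha_{13}\alpha_{31}\Id_2$, so $\psi^2=\alpha_{13}\alpha_{31}\psi^0+c_0D\psi^0$ with $c_0=\langle v,\psi^0\rangle$. Writing $D=\diag(d_1,d_3)$ and using $\langle b,\psi^0\rangle=0$, this gives
\[
\langle b,\psi^2\rangle=c_0\,(d_3-d_1)\,\beta_3\,\alpha_{23},\qquad \beta_3=-\lambda_3q_2\neq0 .
\]
Since $d_3<-1<d_1$, the $r=2$ condition is exactly $c_0=0$. Explicitly, $c_0=-\frac{\alpha_{23}}{2\lambda_2}(r\,m_{12}+m_{32})$, so the condition reads $m_{32}=-r\,m_{12}$. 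Thus there is no ``combined with the previous relation'' step: the three relations decouple. The distinctness of the diagonal entries of $D$ is an ingredient you need and did not mention.

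\textbf{The branch $q=0$, for $i=2$.} The same formula gives $(\psi^2)_1=(\alpha_{13}\alpha_{31}+c_0d_1)(\psi^0)_1$, which vanishes automatically once $m_{21}=0$. So the $r=2$ condition adds nothing.

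Your claim that this condition, or the $i=3$ analysis, forces $m_{23}=0$ when $m_{31}=0$ is false. At $q=0$ the $i=3$ conditions are exactly $m_{31}=0$ and $m_{21}m_{32}=0$. Item 1 indeed allows $m_{23}\neq0$, and your own next paragraph says so, contradicting the bullet.

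The correct bookkeeping is as follows. The $i=2$ conditions at $q=0$ are exactly $m_{21}=0$ and $m_{31}m_{23}=0$. Intersecting with the $i=3$ conditions gives $\mathcal{C}_4$. On $(\mathcal{C}_3\setminus\mathcal{C}_4)\cap\{q=0\}$ one has $m_{31}\neq0$, so $m_{23}=0$ comes from the $r=1$ condition alone.

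\textbf{A caveat inherited from Theorem~\ref{thm M}.} If $T_1+T_3\le T_2$, then $\tau_3=\tau_4=T_2$. In that case the ``only if'' parts of items 1 and 2 fail as literally stated, and so does your identification ``item 1 is $\mathcal{C}_4$''. What the argument actually yields is $\Tinf=\tau_k$ on each piece $\mathcal{C}_k\setminus\mathcal{C}_{k+1}$ of the partition.
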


For $p=3$, there is no simple presentation as for $p=2$, even though the orthogonality conditions are explicit (see also Remark \ref{rem explicit cond C} below) and we know that we only have to check the first six conditions.
Therefore, we only give a nontrivial example:

\begin{example}\label{example p=3}
Let $\sigma \in \R \setminus\ens{0}$ be arbitrary and consider system \eqref{syst} with
$$
\Lambda=\diag(-1,1,2,3),
\quad
M=
\begin{pmatrix}
0 & -3 & 1/(2\sigma) & 0 \\
2 & 0 & 0 & -2 \\
3\sigma & 0 & 0 & -\sigma \\
0 & 2 & 1/(2\sigma) & 0
\end{pmatrix},
\quad
q=
\begin{pmatrix}
0 \\
0 \\
1/3
\end{pmatrix}.
$$

\begin{itemize}
\item
For $i=2$, we have $b=\begin{pmatrix}
1 &
0 &
-1
\end{pmatrix}^\tr$
and
$$
\seq^0=\begin{pmatrix}
1 \\
0 \\
1
\end{pmatrix},
\quad
\seq^1=-\frac{1}{\sigma} \begin{pmatrix}
0 \\
1 \\
0
\end{pmatrix},
\quad
\seq^2=-\frac{3}{2} \seq^0,
\quad
\seq^3=-3 \seq^1,
\quad
\seq^4=\frac{9}{2} \seq^0,
\quad
\seq^5=\frac{45}{4} \seq^1.
$$

\item
For $i=3$, we have the same $b$ and
$$
\seq^0=\sigma \begin{pmatrix}
1 \\
0 \\
1
\end{pmatrix},
\quad
\seq^1=-\sigma \begin{pmatrix}
0 \\
1 \\
0
\end{pmatrix},
\quad
\seq^2=-\frac{3}{4} \seq^0,
\quad
\seq^3=-\frac{3}{2} \seq^1,
\quad
\seq^4=\frac{9}{8} \seq^0,
\quad
\seq^5=\frac{45}{16} \seq^1.
$$

\end{itemize}
From Theorems \ref{thm M} and \ref{thm bounds Nseq}, we deduce that $\Tinf(\Lambda,M,q)=\tau_4=4/3$.

\end{example}

\begin{remark}\label{rem explicit cond C}
For arbitrary $n$, we will see that the orthogonality conditions \eqref{cond seq} are satisfied if one of the following three conditions holds:
\begin{enumerate}[(C1)]
\item\label{n cond 1}
$\kal{A,v}^\tr  \seq^0=\kal{A,b}^\tr  \seq^0=0$, where $\kal{A,h}=(h|Ah|\cdots|A^{n-2}h) \in \R^{(n-1) \times (n-1)}$ denotes the Kalman matrix of $(A,h)$, for any $h \in \R^{n-1}$.

\item\label{n cond 2}
There exists $\emptyset \neq J \subsetneq \ens{1,\ldots,n-1}$ such that $\seq^0_j=a_{rj}=b_r=0$ for every $j \not\in J$ and $r \in J$.

\item\label{n cond 3}
$\ps{b}{\seq^0}{}=0$ and there exists $j_0 \in \ens{1,\ldots,n-1}$ such that $b_{j_0}=0$ and $\rank \Delta_{j_0}=1$, where
$$
\Delta_{j_0}
=\begin{pmatrix}
(D-d_{j_0}) \seq^0 & A^\tr e_{j_0} \\
A^\tr \seq^0 & v_{j_0} \seq^0 -\ps{v}{\seq^0}{} e_{j_0}
\end{pmatrix}
\in \R^{2(n-1) \times 2},
$$
where $d_{j_0}$ is the $j_0$-th diagonal entry of $D$ and $e_{j_0}$ is the $j_0$-th canonical vector of $\R^{n-1}$.

\end{enumerate}
We will also see that, for $n=3$ (resp. $n=4$), it is necessary that one of the conditions \ref{n cond 1}, \ref{n cond 2} (resp. \ref{n cond 1}, \ref{n cond 2}, \ref{n cond 3}) holds (it is however preferable to use Theorem \ref{thm bounds Nseq} in these cases).
\end{remark}

The rest of this article is organized as follows.
In Section \ref{sect syst G}, we use the equivalence between the controllability of system \eqref{syst} and that of a simpler system to obtain a characterization of this property in terms of some orthogonality conditions for the derivatives at the origin of any solution to the so-called kernel equations.
In Section \ref{sect deriv K}, we compute these derivatives for a particular solution and we obtain a general formula for this solution.
In Section \ref{sect hyperplane}, we study the orthogonality conditions associated with the previous solution and we deduce our main results.
In Section \ref{sect other explicit kern}, we supplement ours results by studying the structure of the solution associated with the orthogonality conditions.
Finally, in Appendix \ref{app thm G}, we give a simple proof of the characterization of the controllability properties for the equivalent system and, in Appendix \ref{app thm K}, we prove the existence of a solution to the kernel equations by a new approach.

\section{An equivalent system and the kernel equations}\label{sect syst G}

The first step in the proof of our results is to consider a system which is equivalent to our initial system from a control point of view.

\begin{lemma}\label{lem backstepping}
For any $T>0$, system \eqref{syst} is null controllable in time $T$ if, and only if, so is the system
\begin{equation}\label{syst G}
\begin{dcases}
\pt{\tilde{y}}(t,x)+\Lambda \px{\tilde{y}}(t,x)=F(x) \tilde{y}_-(t,0), \\
\tilde{y}_-(t,1)=\tilde{u}(t), \quad \tilde{y}_+(t,0)=Q\tilde{y}_-(t,0),  \\
\tilde{y}(0,x)=\tilde{y}^0(x),
\end{dcases}
\end{equation}
where $F \in C^0([0,1])^{n \times m}$ is defined by
\begin{equation}\label{def G}
F(x)
=
-K(x,0)\Lambda\begin{pmatrix} \Id_m \\ Q \end{pmatrix},
\end{equation}
and $K \in C^0(\clos{\Tau})^{n \times n}$ is any solution to
\begin{equation}\label{kern equ}
\begin{dcases}
\Lambda\px{K}(x,\xi)
+\pxi{K}(x,\xi)\Lambda
+K(x,\xi)M=0,
\\
\Lambda K(x,x)-K(x,x)\Lambda =M,
\end{dcases}
\end{equation}
in the closure of the triangle $\Tau=\ens{(x,\xi) \in \R^2 \st 0<\xi<x<1}$.
\end{lemma}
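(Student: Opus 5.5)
We will use the classical backstepping approach: a Volterra transformation of the second kind maps solutions of \eqref{syst} to solutions of \eqref{syst G} and can be inverted. Given a solution $K \in C^0(\clos{\Tau})^{n \times n}$ to \eqref{kern equ}, which we assume exists (as in Appendix \ref{app thm K}), define
$$
\tilde{y}(t,x)=y(t,x)-\int_0^x K(x,\xi)y(t,\xi)\,d\xi, \qquad \tilde{u}(t)=u(t)-\int_0^1 K_{-}(1,\xi)y(t,\xi)\,d\xi,
$$
where $K_-$ denotes the first $m$ rows of $K$.

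The main step is to verify that $\tilde{y}$ solves \eqref{syst G}. We will first do this formally for smooth data, differentiating under the integral. We will then extend it to $L^2$ solutions along the characteristics by density and the continuity of the maps $y^0,u \mapsto y$ in $C^0([0,T];L^2)\cap C^0([0,1];L^2(0,T))$.

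For the smooth computation, we use $y_t=-\Lambda y_x+My$ and integrate by parts in $\xi$:
$$
\int_0^x K(x,\xi)\Lambda y_\xi\,d\xi=K(x,x)\Lambda y(t,x)-K(x,0)\Lambda y(t,0)-\int_0^x \pxi{K}\Lambda y\,d\xi .
$$
Collecting terms, $\tilde{y}_t+\Lambda\tilde{y}_x$ consists of three pieces:
\begin{itemize}
\item an interior integral, which vanishes by the first equation of \eqref{kern equ};
\item a diagonal term $(M-\Lambda K(x,x)+K(x,x)\Lambda)y(t,x)$, which vanishes by the second equation;
\item the boundary term $-K(x,0)\Lambda y(t,0)$.
\end{itemize}
Since $y_+(t,0)=Qy_-(t,0)$, we have $y(t,0)=\begin{pmatrix}\Id_m\\ Q\end{pmatrix}y_-(t,0)$. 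Moreover $\tilde{y}(t,0)=y(t,0)$, so the boundary term equals $F(x)\tilde{y}_-(t,0)$, with $F$ as in \eqref{def G}. The same identity $\tilde{y}(t,0)=y(t,0)$ gives the boundary condition at $x=0$ for $\tilde{y}$. The condition at $x=1$ holds by the definition of $\tilde{u}$, and the initial datum is $\tilde{y}^0=(\Id-\mathcal{K})y^0$, where $\mathcal{K}$ is the Volterra operator with kernel $K$.

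Next, $\Id-\mathcal{K}$ is a Volterra operator with continuous kernel, so it is an isomorphism of $L^2(0,1)^n$. Its inverse is $\Id+\mathcal{L}$, where $\mathcal{L}$ is again a Volterra operator with a continuous kernel $L$, obtained from the resolvent kernel through the Neumann series. Hence $y(T)=0$ if and only if $\tilde{y}(T)=0$.

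For the direction (\eqref{syst G} controllable $\Rightarrow$ \eqref{syst} controllable), take $y^0$ and set $\tilde{y}^0=(\Id-\mathcal{K})y^0$. Choose a control $\tilde{u}$ steering \eqref{syst G} to zero, and let $\tilde{y}$ be the corresponding solution. Define $y=(\Id+\mathcal{L})\tilde{y}$ and $u(t)=\tilde{u}(t)+\int_0^1 K_-(1,\xi)y(t,\xi)\,d\xi$. This $y$ lies in the right spaces and satisfies $y(t,1)$-consistency. To check that $y$ solves \eqref{syst} with control $u$, we use uniqueness: let $z$ be the solution of \eqref{syst} with this $u$. Then $(\Id-\mathcal{K})z$ solves \eqref{syst G} with control $\tilde{u}$, because the relation between $u$ and $\tilde{u}$ involves $y=(\Id+\mathcal{L})\tilde{y}$. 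Here a small fixed-point/uniqueness argument is needed: the pair $(z,y)$ satisfies a linear Volterra-type system in $t$, whose solution is unique, so $z=y$. The converse direction is direct, using the forward transform.

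I expect the main obstacle to be exactly this rigorous justification at the level of $L^2$ "solutions along characteristics". That covers the density argument for the computation, and the fact that $\tilde{u}$ and $y$ depend on each other in the converse direction. I would handle the latter by considering the closed-loop system and invoking the well-posedness from \cite{CHOS21} with nonlocal terms. Alternatively, a simpler route is available: controllability in time $T$ is equivalent to a surjectivity statement, and the transformation conjugates the two control-to-state maps.
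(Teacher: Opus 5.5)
Your proposal is correct and follows the paper's route. The paper gives no detailed proof: it invokes the invertible Volterra transformation $\tilde y(t,x)=y(t,x)-\int_0^x K(x,\xi)y(t,\xi)\,d\xi$ and cites the backstepping literature. Your computation of the target system, the inversion of $\Id-\mathcal{K}$, and your uniqueness argument for the converse direction supply the details it leaves out: $e=(\Id-\mathcal{K})z-\tilde y$ solves \eqref{syst G} with zero initial data and a nonlocal boundary feedback, so the energy estimate plus Gronwall gives $e=0$. This also repairs the sentence claiming $(\Id-\mathcal{K})z$ has control $\tilde u$, which is circular as written. One caveat: the integration by parts needs $K\in C^1$. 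That holds for the smooth kernel of Theorem \ref{thm existence and analyticity}, which is the one the paper actually uses, but a merely continuous solution along characteristics would require doing the computation along characteristics.
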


By solution to \eqref{kern equ} we mean solution along the characteristics.
This result is by now well-known: it consists in using the invertible spatial transformation
$$
\tilde{y}(t,x)=y(t,x)-\int_0^x K(x,\xi)y(t,\xi) \, d\xi,
$$
in order to transform a solution of system \eqref{syst} into a solution of system \eqref{syst G} (see e.g. \cite[Section 2.2]{HVDMK19}).
This idea is the starting point of the so-called backstepping method for partial differential equations and introduced more specifically for hyperbolic systems of two equations in \cite{CVKB13}.
Equations \eqref{kern equ} are thus called the kernel equations.
The difficult point is not so much the result of the previous lemma but rather to prove that \eqref{kern equ} actually has at least a solution.
It follows from the results of \cite{HDMVK16} that there are many solutions to the kernel equations \eqref{kern equ} in $\clos{\Tau}$.

\begin{remark}\label{rem one choice is enough}
The choice of solution to the kernel equations \eqref{kern equ} does not affect the controllability properties of system \eqref{syst} because all the corresponding systems \eqref{syst G}-\eqref{def G} are equivalent from a control point of view.
\end{remark}

Now, two problems naturally arise:
\begin{enumerate}
\item
Can we characterize the null controllability of the equivalent system \eqref{syst G} in function of $\Lambda,Q$ and $F$ ?

\item
If so, can this characterization be presented explicitly in terms of $\Lambda,Q$ and $M$ ?
\end{enumerate}

These problems are still open in general.
One particular case where the first problem has been completely solved is the case $m=1$ (one negative speed).
This was done in \cite[Section 5]{HO21-COCV}.

\begin{theorem}\label{thm HO21}
Assume that $m=1$.
Then, system \eqref{syst G} is null controllable in time $T$ if, and only if,
$$
T \geq \max_{2 \leq i \leq n} \ens{T_1+T_i^*, T_2},
$$
where
$$
T_i^*=
\begin{dcases}
T_i & \text{ if } q_{i-1} \neq 0, \\
T_i (1-\ell(f_i)) & \text{ if } q_{i-1}=0,
\end{dcases}
$$
where $\ell(f_i)=\sup I(f_i)$ with $I(f_i)=\ens{\ell \in (0,1) \st f_i=0 \text{ in } (0,\ell)}$, if $I(f_i) \neq \emptyset$, and $\ell(f_i)=0$ otherwise.
\end{theorem}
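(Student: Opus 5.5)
Since the statement is Theorem \ref{thm HO21}, cited from \cite[Section 5]{HO21-COCV}, I would reprove it through the duality/moment approach specialised to $m=1$. After the backstepping reduction (Lemma \ref{lem backstepping}), system \eqref{syst G} has only a nonlocal source term $F(x)\tilde{y}_1(t,0)$, and $F=(f_1,\ldots,f_n)^\tr$ is continuous. The plan is to write the solution explicitly along characteristics in terms of the boundary trace $z(t)=\tilde{y}_1(t,0)$. The condition $\tilde{y}(T,\cdot)=0$ then becomes a set of conditions on $z$ together with the control $\tilde u$.

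For $\tilde y_1$ (speed $\lambda_1<0$), the control $\tilde u$ enters at $x=1$, and $z$ satisfies a Volterra-type relation $z(t)=\tilde u(t-T_1)+\int(\cdots)f_1\,z$ for $t\ge T_1$, with $z$ determined by the initial data for $t<T_1$. Since $z$ can be prescribed freely for $t \geq T_1$ by choosing $\tilde u$ (invert the Volterra operator), the real unknown is $z$ on $(T_1,T)$. For each $i\ge 2$ (speed $\lambda_i>0$), integrating along characteristics gives
\[
\tilde y_i(T,x)=q_{i-1}z(T-xT_i)+\int_{\max(0,T-xT_i)}^{T} f_i\big(x-\lambda_i(T-s)\big)z(s)\,ds + (\text{initial data terms}).
\]

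Necessity and sufficiency then split by component. First, $T\ge T_2$ is needed because the initial data on $\tilde y_i$ leaves the domain only after time $T_i\le T_2$. Second, if $q_{i-1}\neq0$, the boundary value $z$ on $(T-T_i,T)$ must cancel the initial data of the other terms. Because $z$ is still determined by $y^0$ on $(0,T_1)$, one needs $T-T_i\ge T_1$, and I would show this is sharp by exhibiting initial data for which $z|_{(0,T_1)}$ contributes nonzero transported data. Third, if $q_{i-1}=0$, the contribution of $z$ to $\tilde y_i(T,\cdot)$ only passes through $f_i$. Since $f_i$ vanishes on $(0,\ell(f_i))$, the uncontrolled part $z|_{(0,T_1)}$ affects $\tilde y_i(T,\cdot)$ only if $T<T_1+T_i(1-\ell(f_i))$, and this gives the constant $T_i^*$.

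For sufficiency when $T\ge\max_i\{T_1+T_i^*,T_2\}$, I would set $z=0$ on $(T_1,T)$, i.e. choose $\tilde u$ accordingly. Each $\tilde y_i(T,\cdot)$ then reduces to integrals involving $z$ only on $(0,T_1)$ and the initial data, and these vanish by the time inequalities.

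The main obstacle is necessity in the case $q_{i-1}=0$. There one must show that $\int f_i(x-\lambda_i(T-s))z(s)\,ds$ with $z$ on $(0,T_1)$ cannot be compensated by the free part of $z$. The approach I would try first is an observability inequality for the adjoint system, together with a Titchmarsh convolution argument. The convolution kernel $f_i$ has support starting exactly at $\ell(f_i)$, so Titchmarsh's theorem shows that the range of the Volterra-type operator misses data supported near the corresponding endpoint. This yields a counterexample initial datum whenever $T<T_1+T_i(1-\ell(f_i))$.
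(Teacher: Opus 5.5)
Your outline is correct, and it follows the original proof in \cite{HO21-COCV}, which the paper quotes rather than reproves. You write everything along characteristics in terms of $z(t)=\tilde y_1(t,0)$. You note that $z|_{(0,T_1)}$ is an arbitrary $L^2$ function fixed by $\tilde y_1^0$, while $z|_{(T_1,T)}$ is free. Once $T\ge T_2$, you read $\tilde y_i(T,\cdot)=0$ as the homogeneous Volterra equation $q_{i-1}\alpha(\tau)+\int_0^\tau f_i(\lambda_i(\tau-\sigma))\alpha(\sigma)\,d\sigma=0$, $0<\tau<T_i$, for $\alpha(\theta)=z(T-\theta)$.

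The only related proof in the paper is Appendix A, which covers just the analytic special case (Corollary \ref{thm G}). It makes the same reduction, to \eqref{cns yd}, and uses the same second-kind uniqueness when $q_{i-1}\neq0$. For $q_{i-1}=0$, however, it avoids Titchmarsh by differentiating the equation $N$ times, which needs some $f_i^{(N-1)}(0)\neq0$ and hence $\ell(f_i)=0$. That shortcut is unavailable for the theorem as stated: $f_i$ is only continuous and may vanish on $(0,\ell(f_i))$, or be flat at $0$ without vanishing on any interval. There your Titchmarsh step is the right tool, and it applies directly to the equation above, so no adjoint observability inequality is needed. It gives $\alpha=0$ on $(0,T_i(1-\ell(f_i)))$, i.e. $z=0$ on $(T-T_i^*,T)$. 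In the same way, second-kind uniqueness gives $z=0$ on $(T-T_i,T)$ when $q_{i-1}\neq0$, so in that case there is nothing to ``cancel''.

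One omission to fix: you never impose $\tilde y_1(T,\cdot)=0$. In the sufficiency part this costs nothing, because the values of $\tilde u$ on $(T-T_1,T)$ do not influence $z$ on $(0,T)$ and can be chosen to clear $\tilde y_1(T,\cdot)$. Alternatively, remove $f_1$ first as in Appendix A. In the necessity part, this component is where the constraint $T\ge T_1$ comes from when all $T_i^*=0$.
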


The second problem could not be solved though because, even if the conditions for $f$ are explicit, the ``map'' $M \mapsto f$ (``defined'' by \eqref{kern equ}-\eqref{def G}, with $m=1$) is quite complicated.
It was left as an open problem in the same paper.
This is precisely where our main results step in.

From the above result of \cite{HO21-COCV} we see that the values at $x=0$ of $f$ and its derivatives (assuming it is smooth) can affect the minimal null control time $\Tinf(\Lambda,-,q,f)$ of the system ($T_i^*=T_i$ if $f_i^{(N)}(0) \neq 0$ for some $N \geq 0$).
Our idea is to show that these values in fact completely characterize $\Tinf(\Lambda,-,q,f)$ because $M$ is constant and that we can explicitly relate them to $M$ thanks to the kernel equations.

It is clear that $\Tinf(\Lambda,-,q,f)$ is solely characterized by $f(0), f'(0), f''(0),$ etc. if we have
\begin{equation}\label{hyp f analytic}
\text{$f$ is analytic in a neighborhood of $[0,1)$.}
\end{equation}

Under such an assumption, Theorem \ref{thm HO21} takes a simpler form:

\begin{corollary}\label{thm G}
Assume that $m=1$, let $q \in \R^{n-1}$ be given and assume \eqref{hyp f analytic}.
Then, we have:
\begin{enumerate}
\item
$\Tinf(\Lambda,-,q,f) \in \ens{\tau_2,\ldots,\tau_{n+1}}$ (recall \eqref{def times tau}).
Moreover, the infimum is always reached.

\item
For any $k \in \ens{2,\ldots,n+1}$, we have
$$
\Tinf(\Lambda,-,q,f)=\tau_k \quad \Longleftrightarrow \quad (q,f) \in \mathcal{S}_k \setminus \mathcal{S}_{k+1},
$$
where, for every $k \in \ens{2,\ldots,n+1}$, $\mathcal{S}_k$ is the set of $(q,f) \in \R^{n-1} \times C^0([0,1])^n$ such that $q_{i-1}=f_i=0$ for every $2 \leq i<k$ (we use the convention that $\mathcal{S}_2=\R^{n-1} \times C^0([0,1])^n$), and $\mathcal{S}_{n+2}=\emptyset$.

\end{enumerate}

\end{corollary}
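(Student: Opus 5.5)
The plan is to derive the corollary directly from Theorem \ref{thm HO21} by using analyticity to reduce each $\ell(f_i)$ to one of the two extreme values $0$ or $1$.

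\emph{Step 1: dichotomy for $\ell(f_i)$.} By \eqref{hyp f analytic}, each component $f_i$ is analytic on a connected neighborhood of $[0,1)$. If $f_i$ vanishes on some interval $(0,\ell)$ with $\ell>0$, the identity theorem gives $f_i=0$ on $[0,1)$, and continuity gives $f_i=0$ on $[0,1]$. Otherwise $I(f_i)=\emptyset$. Hence either $\ell(f_i)=1$ (exactly when $f_i\equiv 0$), or $\ell(f_i)=0$. Substituting into the definition of $T_i^*$:
\begin{itemize}
\item $T_i^*=0$ if $q_{i-1}=0$ and $f_i=0$;
\item $T_i^*=T_i$ otherwise.
\end{itemize}
So Theorem \ref{thm HO21} says that the system is null controllable in time $T$ if and only if $T\geq \max_{2\le i\le n}\max\{T_1+T_i^*,T_2\}$. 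This right-hand side is attained, which already gives the statement that the infimum is reached.

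\emph{Step 2: compute the maximum.} Let $(q,f)\in\mathcal{S}_k\setminus\mathcal{S}_{k+1}$ with $k\le n$. For $2\le i<k$ we have $T_i^*=0$, so those indices contribute $\max\{T_1,T_2\}$. Since $(q,f)\notin\mathcal{S}_{k+1}$, at index $k$ we have $q_{k-1}\neq0$ or $f_k\neq0$, so $T_k^*=T_k$. For $i>k$ we only know $T_i^*\le T_i\le T_k$ by \eqref{order times}. The maximum is therefore
$$\max\{T_1+T_k,T_2\}=\tau_k,$$
because $T_1+T_k\ge T_1$ absorbs the contributions from $i<k$ and dominates those from $i>k$.

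If instead $k=n+1$, all $T_i^*$ vanish and the time is $\max\{T_1,T_2\}=\tau_{n+1}$. The sets $\mathcal{S}_k\setminus\mathcal{S}_{k+1}$ for $k=2,\ldots,n+1$ partition $\R^{n-1}\times C^0([0,1])^n$, since $\mathcal{S}_2$ is the whole space and $\mathcal{S}_{n+2}=\emptyset$. This proves the implication ``$\Leftarrow$'' in item 2 and shows that $\Tinf\in\{\tau_2,\ldots,\tau_{n+1}\}$.

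\emph{Step 3: converse.} The $\tau_k$ need not be distinct, so the converse requires care. Suppose $\Tinf=\tau_k$ and let $k'$ be the unique index with $(q,f)\in\mathcal{S}_{k'}\setminus\mathcal{S}_{k'+1}$; Step 2 gives $\tau_{k'}=\tau_k$. If $\tau_k=\tau_{k'}$ with $k\ne k'$, the set-level equivalence cannot hold literally. So I would read item 2 in the sense that $\tau_k$ is the value taken on that stratum, i.e. the same convention as in Theorem \ref{thm M}.

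The only genuine content, and the expected obstacle, is the analytic dichotomy of Step 1 together with the endpoint continuity argument; everything else is bookkeeping with \eqref{order times}.
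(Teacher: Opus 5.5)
Your proof is correct. It is the deduction the paper has in mind when it calls the corollary ``immediate'' from Theorem \ref{thm HO21}: analyticity forces $\ell(f_i)\in\{0,1\}$, so $T_i^*\in\{0,T_i\}$, and \eqref{order times} collapses the maximum to $\tau_k$ on $\mathcal{S}_k\setminus\mathcal{S}_{k+1}$.

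The proof the paper actually writes, in Appendix \ref{app thm G}, takes a different and self-contained route. Its purpose is to avoid Theorem \ref{thm HO21}, which rests on the Titchmarsh convolution theorem.
\begin{itemize}
\item It first removes $f_1$ by a Volterra-type change of unknown.
\item Sufficiency of $T\geq\tau_k$ comes from switching the control off on $(T-(T_1+T_k),T)$.
\item For necessity, it rewrites $\tilde{y}_k(T,\cdot)=0$ as the convolution equation $\hat{q}_k\alpha(\tau)+\int_0^\tau\beta(\tau-\sigma)\alpha(\sigma)\,d\sigma=0$ on $(0,T_k)$. Here $\hat{q}_k=q_{k-1}$, $\alpha(\theta)=\tilde{y}_1(T-\theta,0)$ and $\beta(\theta)=f_k(\lambda_k\theta)$.
\item If $\hat{q}_k\neq0$, this is a Volterra equation of the second kind. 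If $\hat{q}_k=0$, analyticity gives a first nonvanishing derivative $f_k^{(N-1)}(0)$, and differentiating $N$ times again produces a second-kind equation.
\item In either case $\alpha=0$, which forces $T\geq T_1+T_k$.
\end{itemize}
Both arguments use analyticity for the same fact: a nonzero analytic $f_k$ cannot vanish near $0$. In the paper this fact replaces the Titchmarsh theorem and makes the corollary elementary. Your route is a few lines long, but it relies on the full strength of Theorem \ref{thm HO21}, which covers arbitrary continuous $f$.

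Your caveat in Step 3 is also well founded. For $3\leq k\leq n$ one has $\tau_k=\tau_{k+1}$ whenever $T_2\geq T_1+T_k$, and then the literal ``$\Longrightarrow$'' in item 2 fails. The paper's proof establishes exactly what yours does. It begins with ``It is enough to show that, if $(q,f)\in\mathcal{S}_k\setminus\mathcal{S}_{k+1}$ \dots, then system \eqref{syst G} is null controllable in time $T$ if, and only if, $T\geq\tau_k$.'' Together with the fact that the strata partition the space, this gives item 1 and the ``$\Longleftarrow$'' direction. The converse holds only when the relevant values $\tau_k$ are distinct.
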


This result is immediate from the previous theorem but we give a simple and direct proof in Appendix \ref{app thm G}.
Note that it is the complete analogue of Theorem \ref{thm M} for system \eqref{syst G}.
By Lemma \ref{lem backstepping}, the minimal null control time for the initial system \eqref{syst} is thus also completely determined by the sets $\mathcal{S}_k$.
However, apart from $\mathcal{S}_{n+1}$, these sets are not explicit in terms of $M$, which is unsatisfactory.

Assumption \eqref{hyp f analytic} is indeed satisfied in our framework because we can always find an analytic solution to the kernel equations since $M$ is constant.
More precisely, we have the following result:

\begin{theorem}\label{thm existence and analyticity}
Let $m,p \geq 1$ be arbitrary.
Assume that $M \in \Mset$.
For any $\delta \in \R$ with $\delta \neq 1$, there exists a unique $K \in C^{\infty}(\R^2)^{n \times n}$ that satisfies \eqref{kern equ} for every $(x,\xi) \in \R^2$ and the condition
\begin{equation}\label{general BC}
\diag K(x,\delta x)=0, \quad \forall x \in \R.
\end{equation}
Moreover, it satisfies the estimate
\begin{equation}\label{estim k}
\forall \text{ bounded } V \subset \R^2, \exists C>0, \quad \norm{K}_{C^s(\clos{V})^{n \times n}} \leq C^s,
\quad \forall s \in \N.
\end{equation}

\end{theorem}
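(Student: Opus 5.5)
We argue entrywise along the characteristics. Writing $K=(k_{ij})$, the first equation of \eqref{kern equ} reads
$$
\lambda_i \px{k_{ij}}+\lambda_j \pxi{k_{ij}}=-\sum_{r} k_{ir} m_{rj},
$$
so each $k_{ij}$ is transported along the direction $(\lambda_i,\lambda_j)$. The boundary condition $\Lambda K(x,x)-K(x,x)\Lambda=M$ gives $(\lambda_i-\lambda_j)k_{ij}(x,x)=m_{ij}$ for $i\neq j$. For $i=j$ it is void because $M\in\Mset$, and it is replaced by \eqref{general BC}: $k_{ii}(x,\delta x)=0$. The key geometric observation is transversality. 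For $i\neq j$ the characteristic direction $(\lambda_i,\lambda_j)$ is not parallel to the diagonal $(1,1)$. For $i=j$ the direction $(1,1)$ is not parallel to $(1,\delta)$, precisely because $\delta\neq 1$. Hence every characteristic line meets its data line $\Gamma_{ij}$ exactly once. This allows us to rewrite the system as a linear system of integral equations on all of $\R^2$:
$$
k_{ij}(x,\xi)=c_{ij}-\int_0^{s_{ij}(x,\xi)} \sum_r k_{ir}m_{rj}\big(\gamma_{ij}(\sigma;x,\xi)\big)\,d\sigma .
$$
Here $c_{ij}=m_{ij}/(\lambda_i-\lambda_j)$, or $0$ on the diagonal. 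The point $\gamma_{ij}(\sigma;x,\xi)$ runs along the characteristic through $(x,\xi)$. The length $s_{ij}$ is a linear function of $(x,\xi)$ measuring the distance to $\Gamma_{ij}$, with $|s_{ij}|\le c(|x|+|\xi|)$.

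The system decouples by rows (row $i$ only involves $k_{i\cdot}$), and it is linear with constant coefficients. So existence and uniqueness of a continuous solution on each ball $B_R$ follow by a Picard iteration or Volterra-type argument. Since all lines pass through the origin, every characteristic segment from $(x,\xi)\in B_R$ to its data line stays in $B_{CR}$, and the integral is over a segment of length at most $C|(x,\xi)|$. The iterates therefore satisfy a bound of the form $\|K^{(N)}-K^{(N-1)}\|\le (C|M||(x,\xi)|)^N/N!$. We expect the main obstacle here: the segments do not shrink in a way that gives the factorial automatically. We would handle it by a weighted norm, $\sup e^{-L(|x|+|\xi|)}|K|$ with $L$ large, which makes the integral operator a contraction on continuous functions of linear growth. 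The resulting fixed point is then the unique continuous solution on $\R^2$. Uniqueness is within continuous solutions along characteristics, and a $C^\infty$ solution is one of them.

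For smoothness and the estimate \eqref{estim k}, we would rely on the homogeneity of the problem rather than differentiating the integral equation. The equation and the data are invariant under dilations, and all coefficients are constant. We therefore seek $K$ as a power series $K(x,\xi)=\sum_{s\ge0} P_s(x,\xi)$, with $P_s$ homogeneous matrix polynomials of degree $s$. Plugging this in gives the recursion
$$
\lambda_i\partial_x P_{s+1,ij}+\lambda_j\partial_\xi P_{s+1,ij}=-(P_sM)_{ij}, \qquad P_{s+1,ij}|_{\Gamma_{ij}}=0,
$$
with $P_0=(c_{ij})$. Each step is uniquely solvable in homogeneous polynomials, again by transversality: on each homogeneous degree, the operator $\lambda_i\partial_x+\lambda_j\partial_\xi$ with zero trace on $\Gamma_{ij}$ is invertible. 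Explicitly, $P_{s+1}$ equals the integral of $P_sM$ along the characteristic from $\Gamma_{ij}$. This integral is bounded by $C|M|\,|(x,\xi)|^{s+1}/(s+1)$ times $\sup_{|z|=1}|P_s|$, because for homogeneous $g$ of degree $s$, $\int_0^{t}|g(\text{point at distance }\sigma\text{ from }\Gamma)|\,d\sigma\lesssim \sup|g|\,|z|^{s+1}/(s+1)$. Inductively, $\sup_{|z|=1}|P_s|\le C_0(C|M|)^s/s!$. The series thus converges to an entire function of $(x,\xi)$ of exponential type. It solves the integral system, so by uniqueness it coincides with $K$.

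The bound \eqref{estim k} then follows from Cauchy estimates. An entire function with $|K(z)|\le C_0e^{a|z|}$ on $\mathbb{C}^2$ satisfies $|\partial^\alpha K|\le \alpha!\,\rho^{-|\alpha|}C_0e^{a(R+\rho)}$ on $V\subset B_R$, for every $\rho>0$. Choosing $\rho$ of order $s$ gives $\|K\|_{C^s(\clos V)}\le C\,s!\,(e a/s)^s e^{aR}$, which is at most $C^s$ after enlarging $C$, by Stirling's formula. (It even decays, but $C^s$ suffices.) The delicate step is the homogeneous-polynomial bound: we must control the sup on the unit sphere of the integrated polynomial, uniformly in $s$. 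This uses the fact that the distance to $\Gamma_{ij}$ along the characteristic is comparable to $|z|$, with a constant depending only on the angle between the two lines. That angle is bounded below uniformly in $(i,j)$ since $\delta\neq1$ and $\lambda_i\neq\lambda_j$.
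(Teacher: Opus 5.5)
There is a genuine gap, and it is the same one in both halves of your plan. You use the Euclidean size of $(x,\xi)$ as the Volterra variable: through the weight $e^{-L(|x|+|\xi|)}$ in the existence step, and through $\sup_{|z|=1}$ in the polynomial step. This quantity is not monotone along backward characteristics.

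\textbf{Existence and uniqueness.} The weighted operator is not a contraction; it is not even bounded. Take $n=2$, row $i=2$ and $|\lambda_1|<\lambda_2$. The backward characteristic of $k_{21}$ (direction $(\lambda_2,\lambda_1)$) from $(0,E)$ meets the diagonal at $\frac{\lambda_2E}{\lambda_2-\lambda_1}(1,1)$, where $|x|+|\xi|=\frac{2\lambda_2}{\lambda_2-\lambda_1}E>E$. Along this segment, of length of order $E$, the quantity $|x|+|\xi|$ increases linearly. Hence the integral of $e^{L(|x|+|\xi|)}$ over it exceeds $e^{LE}$ by a factor of order $e^{cLE}$, which is unbounded in $E$.

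For the same reason the argument on balls does not close. A single segment from $B_R$ stays in $B_{CR}$, with $C>1$ in general. But the Picard iterates run along broken characteristic paths, so the constant compounds and no ball is invariant. Your bound $(C|M||(x,\xi)|)^N/N!$ is therefore unfounded. Moreover, a contraction in a weighted space would only give uniqueness among solutions with that growth, not in $C^\infty(\R^2)$ as the theorem states.

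The missing ingredient is a bounded set that is invariant under all backward characteristics and on which some quantity decreases linearly along them. This is exactly what the paper's domain of determinacy $D$ provides (Lemma~\ref{prop char remain in D}). After eliminating $k_i$ through \eqref{equation ki}, the Volterra variable becomes $|x-\xi|$. It decreases along the $j$-characteristics, $j\neq i$, and is constant along the $i$-characteristic in the inner integral. This gives $|(\Phi^N k)_\ell(x,\xi)|\le C^N|x-\xi|^N\|k\|/N!$ on $\clos{D}$, and such sets $D$ exhaust $\R^2$.

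\textbf{The polynomial step.} The one-step bound $\sup_{|z|=1}|P_{s+1}|\le C|M|\sup_{|z|=1}|P_s|/(s+1)$ is false. Its justification fails because, near its foot on $\Gamma_{ij}$, the segment is close to $\Gamma_{ij}$ but not close to the origin, so the integrand does not decay like $\sigma^s$.
\begin{itemize}
\item If $\lambda_j=-\lambda_i$ and $g=(x+\xi)^s$, the solution of $\lambda_i\partial_xP+\lambda_j\partial_\xi P=g$ vanishing on the diagonal is $P=(x-\xi)(x+\xi)^s/(2\lambda_i)$. Its sup on the unit circle is only of order $s^{-1/2}$ times that of $g$.
\item If the direction is nearly parallel to $\Gamma_{ij}$, the foot of the segment can lie much farther from $0$ than $z$, and the ratio even grows geometrically in $s$.
\end{itemize}
So you get no $1/s!$ decay, hence neither exponential type nor \eqref{estim k}. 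The paper obtains \eqref{estim k} differently. It proves $C^s$ regularity on $\clos{D}$ by differentiating the integral equations, and gets the constant $R^s$ in \eqref{thm exist kern estimate} from the constant coefficients, as in \cite[Lemma 6.2]{CN19}.

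\textbf{A possible repair.} Both halves can be fixed by replacing the Euclidean size with a gauge that decreases at a uniform rate along every backward characteristic. For example, take $\phi(x,\xi)=|\xi-\delta x|+C|x-\xi|$ with $C>\max_{j\neq i}|\lambda_j-\delta\lambda_i|/|\lambda_i-\lambda_j|$.
\begin{itemize}
\item Along $(\lambda_i,\lambda_j)$, $j\neq i$, toward the diagonal, $|x-\xi|$ decreases at rate $|\lambda_i-\lambda_j|$, which dominates the change of $|\xi-\delta x|$.
\item Along $(\lambda_i,\lambda_i)$, $x-\xi$ is constant, while $|\xi-\delta x|$ decreases at rate $|\lambda_i(1-\delta)|>0$ toward the line $\xi=\delta x$.
\end{itemize}
The sublevel sets of $\phi$ are then bounded invariant domains of determinacy exhausting $\R^2$. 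On them you get existence and unconditional uniqueness with the bound $(C'\phi)^N/N!$, even without eliminating $k_i$. With the norm $\sup_{\phi\le 1}|P_s|$, your homogeneous recursion does gain the factor $1/(s+1)$, since $\int_0^{\phi(z)/c}(\phi(z)-ct)^s\,dt=\phi(z)^{s+1}/(c(s+1))$.
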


The kernel equations \eqref{kern equ} have been extensively studied in the literature (see e.g. \cite{CVKB13,DMVK13,HDM15,HDMVK16,HVDMK19,CN19}) but Theorem \ref{thm existence and analyticity} does not follow from the results contained therein.
The most important difference is that, in Theorem \ref{thm existence and analyticity}, the kernel exists on a larger set than the triangle $\Tau$.
This is crucial since we want $x \mapsto f(x)=-K(x,0)\Lambda \begin{pmatrix} 1 & q \end{pmatrix}^\tr$ to be analytic in an interval of the form $(-\epsilon,1)$, $\epsilon>0$, which does not lie entirely in $\Tau$.
This yields nontrivial issues in the standard fixed point approach, notably because we now have to consider points that are ``on the other side'' of the diagonal $(x,x)$, that is the condition imposed in the kernel equations at $(x,x)$ cannot be consider as a boundary condition anymore.
We have developed in Appendix \ref{app thm K} a new approach to solve the kernel equations that encompasses in particular the proof of Theorem \ref{thm existence and analyticity}.

\begin{remark}
Estimate \eqref{estim k} and Taylor's theorem show that the solution $K$ to \eqref{kern equ}-\eqref{general BC} is in fact a power series.
\end{remark}

As a consequence of Theorem \ref{thm existence and analyticity}, we see that, if $q_{i-1}=0$, then $f=-K(\cdot,0)\Lambda \begin{pmatrix} 1 & q \end{pmatrix}^\tr$ satisfies $f_i=0$ in $(0,1)$ if, and only if,
\begin{equation}\label{orth cond newk}
\ps{b}{\frac{\partial^r \newk}{\partial x^r}(0,0)}{}=0, \quad \forall r \in \N,
\end{equation}
where $b \in \R^{n-1}$ is defined in \eqref{def b} and $\newk=(k_{ij})_{j \neq i}$.
It remains to relate the derivatives of the kernel at the origin with $M$.
This is the purpose of the next section.
This will be done only for a very well chosen particular solution to the kernel equations (i.e. for one $\delta \neq 1$) but this will be enough for our purposes as already underlined in Remark \ref{rem one choice is enough}.

\begin{remark}
We emphasize that, in all the sections below and unless specifically mentioned, the number of negative speeds $m$ is arbitrary (the orthogonality conditions \eqref{orth cond newk} are studied for any nonzero $b \in \R^{n-1}$).
\end{remark}

\section{The derivatives of the kernel at the origin}\label{sect deriv K}

\subsection{Normalization of the equations}\label{sect notaa}

Let us first observe that a feature of the kernel equations \eqref{kern equ}-\eqref{general BC} is that it does not couple different rows of $K$:
\begin{equation}\label{kern equ compo}
\begin{dcases}
\lambda_i\px{k_{ij}}(x,\xi)
+\pxi{k_{ij}}(x,\xi)\lambda_j
+\sum_{r=1}^n k_{ir}(x,\xi) m_{rj}=0,
\\
\lambda_i k_{ij}(x,x)-k_{ij}(x,x)\lambda_j=m_{ij} \quad (j \neq i),
\quad
k_{ii}(x,\delta x)=0.
\end{dcases}
\end{equation}
Therefore, all along Section \ref{sect deriv K}, $i \in \ens{1,\ldots,n}$ is fixed and we will drop the dependence on $i$ for clarity.

Let us now introduce some important notations.
Some of them have already been introduced in Section \ref{sect nota 1} but they are recalled here for the sake of the presentation.

\begin{itemize}
\item
It is convenient to normalize the kernel equations by $\lambda_i-\lambda_j$ for $j \neq i$ and by $\lambda_i$ otherwise.
The kernel equations \eqref{kern equ compo} become
\begin{equation}\label{kern equ j}
\begin{dcases}
\mu_j\px{k_j}(x,\xi)
+\pxi{k_j}(x,\xi)\nu_j
+\sum_{r=1}^n k_r(x,\xi) \newm_{r j}
=0,
\\
k_j(x,x)=\newm_{ij} \quad (j \neq i),
\quad
k_i(x,\delta x)=0,
\end{dcases}
\end{equation}
where $k=
\begin{pmatrix}
k_{i1} &
\cdots &
k_{in}
\end{pmatrix}^\tr$ and
$$
\mu_j=\frac{\lambda_i}{\lambda_i-\lambda_j},
\quad
\nu_j=\frac{\lambda_j}{\lambda_i-\lambda_j},
\quad (j \neq i),
\quad \mu_i=\nu_i=1,
$$

$$
\newm_{rj}=
\frac{m_{rj}}{\lambda_i-\lambda_j},
\quad (j \neq i),
\quad
\newm_{ri}=\frac{m_{ri}}{\lambda_i}.
$$
Note that, with this normalization, we have in particular $\mu_j-\nu_j=1$ for $j \neq i$.

\item
Since the component $k_i$ plays a different role than all the other components $k_j$ with $j \neq i$, we rewrite \eqref{kern equ j} in a matrix form separating both quantities.
Let us denote by $\newn=n-1$ and introduce $\newk=(k_j)_{j \neq i}$.
Then, system \eqref{kern equ j} can be written as
\begin{equation}\label{kern equ vec}
\begin{dcases}
D_\mu\px{\newk}(x,\xi)
+D_\nu \pxi{\newk}(x,\xi)
+A^\tr\newk(x,\xi)
+k_i(x,\xi) \seq^0
=0,
\\
\px{k_i}(x,\xi)
+\pxi{k_i}(x,\xi)
+\ps{w}{\newk(x,\xi)}{}
=0,
\\
\newk(x,x)=\seq^0,
\quad
k_i(x,\delta x)=0,
\end{dcases}
\end{equation}
where $D_\mu, D_\nu, A \in \R^{\newn \times \newn}$ are the matrices defined by
$$
D_\mu=\diag (\mu_j)_{j \neq i},
\quad
D_\nu=\diag (\nu_j)_{j \neq i},
\quad
A=(\newm_{rj})_{r,j \neq i},
$$
and $\seq^0, w \in \R^{\newn}$ are the vectors defined by
$$
\seq^0=(\newm_{ij})_{j \neq i},
\quad
w=(\newm_{ji})_{j \neq i}.
$$
Note that we used that $\newm_{ii}=0$ (since $M \in \Mset$).
Finally, it will also be convenient to use the matrix $D \in \R^{\newn \times \newn}$ and the vector $v \in \R^{\newn}$ defined by
\begin{equation}\label{def D and v}
D=D_\mu+D_\nu, \quad v=-\frac{1}{2}w.
\end{equation}

\end{itemize}

\subsection{Computation of the derivatives}

The main result of this section is the following.

\begin{theorem}\label{thm derivatives}
For the solution to \eqref{kern equ vec} with $\delta=-1$, we have
\begin{equation}\label{comp deriv newk}
\frac{\partial^{\gamma+\sigma} \newk}{\partial x^{\gamma} \partial \xi^{\sigma}}(0,0)
=\sum_{r=0}^{\gamma} \sum_{s=0}^{\sigma} (-1)^r \binom{\gamma}{r} \binom{\sigma}{s} \seq_{\gamma+\sigma-(r+s),r+s}, \quad \forall \gamma,\sigma \in \N,
\end{equation}
where $(\seq_{r,s})_{r,s \in \N}$ is the sequence defined by
\begin{equation}\label{def seq rs}
\seq_{r,0} =\seq^r,
\quad
\seq_{r,s} =0 \quad \text{ if } r<s,
\quad
\seq_{r,s} =
\sum_{\ell=0}^{r-s} \ps{v}{\seq_{r-1-\ell,s-1}}{} \seq_{\ell,0} \quad \text{ if } r \geq s \geq 1,
\end{equation}
where $\seqset$ is the sequence defined in \eqref{def seq}.
\end{theorem}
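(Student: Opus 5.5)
The plan is to change to characteristic-adapted coordinates in which the two boundary conditions of \eqref{kern equ vec} with $\delta=-1$ sit on the coordinate axes. Then I would compute all Taylor coefficients of $(\newk,k_i)$ at the origin by induction on the total order, and finally transform back to $(x,\xi)$.

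\textbf{Step 1: coordinates.} I set $\zeta=x+\xi$ and $\eta=x-\xi$, up to a sign or scaling that I will fix so that the binomial transformation comes out exactly as in \eqref{comp deriv newk}. The diagonal $\xi=x$ becomes $\eta=0$, and the anti-diagonal $\xi=-x$, where $k_i$ vanishes, becomes $\zeta=0$. Using $\mu_j-\nu_j=1$, the operator $D_\mu\partial_x+D_\nu\partial_\xi$ becomes $D\,\partial_\zeta\pm\partial_\eta$ (recall $D=D_\mu+D_\nu$). The equation for $k_i$ becomes $2\partial_\zeta k_i=-\ps{w}{\newk}{}$, i.e. $\partial_\zeta k_i=\ps{v}{\newk}{}$; this is exactly why $v=-\frac12 w$ appears.

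I then define $\seq_{a,b}$ as the mixed derivative of $\newk$ of order $a$ in $\eta$ and $b$ in $\zeta$ at $(0,0)$, appropriately weighted. Expanding $\partial_x^\gamma\partial_\xi^\sigma$ in terms of $\partial_\zeta$ and $\partial_\eta$ by the binomial formula then gives \eqref{comp deriv newk} directly. This is routine bookkeeping once the sign convention is fixed.

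\textbf{Step 2: properties of $\seq_{a,b}$.} I need to show that these coefficients satisfy \eqref{def seq rs}.
\begin{itemize}
\item From $k_i(0,\eta)=0$ (i.e. $\zeta=0$), all pure $\eta$-derivatives of $k_i$ vanish at the origin. Integrating $\partial_\zeta k_i=\ps{v}{\newk}{}$ then expresses every $\partial_\eta^{a}\partial_\zeta^{b}k_i(0,0)$ with $b\ge1$ as $\ps{v}{\partial_\eta^{a}\partial_\zeta^{b-1}\newk(0,0)}{}$.
\item From $\newk=\seq^0$ on $\eta=0$, all pure $\zeta$-derivatives of $\newk$ of positive order vanish at the origin.
\item Differentiating the vector equation gives the recursion $\partial_\eta(\cdot)=\mp\bigl(D\partial_\zeta(\cdot)+A^\tr(\cdot)+k_i\seq^0\bigr)$, which determines every coefficient by induction on the $\eta$-order.
\end{itemize}
From these I would prove by induction on $a+b$ the vanishing $\seq_{a,b}=0$ for $a<b$, and the identification $\seq_{a,0}=\seq^a$ with $\seq^a$ given by \eqref{def seq}.

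\textbf{Step 3: the main obstacle.} The hardest part is recovering the genuinely quadratic structure of \eqref{def seq}, namely the convolution $\sum_\ell\ps{v}{\seq^{r-2-\ell}}{}\seq^\ell$ and the factorized formula for $\seq_{r,s}$, from a linear system. My approach is to prove a stronger structural statement first: every coefficient of $\newk$ and of $k_i$ is a combination of the vectors $\seq^\ell$ with scalar weights of the form $\ps{v}{\cdot}{}$ applied to lower-order coefficients. I would state this via generating functions $\Psi_s(t)=\sum_r\seq_{r,s}t^r$, for which \eqref{def seq rs} reads $\Psi_s=t\,\langle v,\Psi_{s-1}\rangle\,\Psi_0$. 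The PDE then has to be checked to be compatible with this product form, in particular that the $D\partial_\zeta$ term produces exactly the $D\sum\ldots$ term of \eqref{def seq}. If the direct induction becomes unwieldy, my fallback is to verify the ansatz: define the power series with coefficients $\seq_{a,b}$ from \eqref{def seq rs} and check that it satisfies \eqref{kern equ vec} with $\delta=-1$ formally. By the uniqueness part of Theorem \ref{thm existence and analyticity}, it then coincides with the Taylor series of $K$; convergence is not needed, since only coefficients are compared.
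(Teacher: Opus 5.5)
Your Steps 1--2 coincide with the paper's setup. The paper uses $t=\xi-x$ (your $-\eta$) and $\theta=x+\xi$ (your $\zeta$), and integrates the $k_i$-equation from the anti-diagonal instead of carrying $k_i$ along. It arrives at the same recursion \eqref{equ X}: $\seq_{0,s}=0$ for $s\ge1$, and $\seq_{r+1,s}=D\seq_{r,s+1}+A^\tr\seq_{r,s}+\ps{v}{\seq_{r,s-1}}{}\seq^0$, with the last term absent for $s=0$.

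The gap is Step 3. You correctly locate the whole content of the theorem there, but you only say that the product form ``has to be checked'' against the recursion. So \eqref{def seq rs}, including $\seq_{r,0}=\seq^r$, is not actually proved. In the paper this is the induction \eqref{property N}, whose key step is the reordering identity \eqref{remaing id}. Also, induction on $a+b$ does not close the vanishing for $a<b$ by itself, because $\seq_{a+1,b}$ and $\seq_{a,b+1}$ have the same total order; induct on the $\eta$-order, as the recursion dictates.

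Your generating-function idea does close the gap, and more quickly than the paper's induction. Write $z$ for your formal variable and $\Psi_s=\sum_r\seq_{r,s}z^r$. The recursion is then equivalent to
\[
\Psi_0=\seq^0+z\bigl(D\Psi_1+A^\tr\Psi_0\bigr),\qquad
\Psi_s=z\bigl(D\Psi_{s+1}+A^\tr\Psi_s+\ps{v}{\Psi_{s-1}}{}\seq^0\bigr)\quad(s\ge1),
\]
which determines all $\Psi_s$ uniquely as formal series (induction on the power of $z$). Put $\Phi_0=\sum_r\seq^rz^r$, $g=\ps{v}{\Phi_0}{}$ and $\Phi_s=(zg)^s\Phi_0$.
\begin{itemize}
\item Definition \eqref{def seq} is exactly the identity $\Phi_0=\seq^0+zA^\tr\Phi_0+z^2gD\Phi_0$, which is what the $s=0$ equation becomes for $(\Phi_s)$.
\item Since $\ps{v}{\Phi_{s-1}}{}=z^{s-1}g^s$, the $s$-th equation for $(\Phi_s)$ is $(zg)^s$ times that identity.
\end{itemize}
Hence $\Psi_s=(zg)^s\Phi_0=z\ps{v}{\Psi_{s-1}}{}\Psi_0$. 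Extracting coefficients gives $\seq_{r,0}=\seq^r$, $\seq_{r,s}=0$ for $r<s$, and \eqref{def seq rs}.

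This is your ``fallback'', but the uniqueness it relies on is this formal one. It is not the uniqueness in Theorem \ref{thm existence and analyticity}, which concerns smooth functions and says nothing about formal series. The route replaces the paper's convolution bookkeeping and even yields the closed form $\sum_{r,s}\seq_{r,s}z^r\theta^s/s!=e^{\theta zg(z)}\Phi_0(z)$.

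Finally, no sign convention can make the binomial step reproduce \eqref{comp deriv newk} exactly. Differentiating $\newk(x,x)=\seq^0$ and using $D_\mu-D_\nu=\Id_{\newn}$ and $k_i(0,0)=0$ gives $\partial_x\newk(0,0)=-A^\tr\seq^0=-\seq^1$. By contrast, \eqref{comp deriv newk} with $(\gamma,\sigma)=(1,0)$ gives $+\seq^1$. The chain rule produces $(-1)^{\gamma-r}$ rather than $(-1)^r$, which is consistent with Corollary \ref{thm formula}. The paper's own chain-rule display has the same slip; it is harmless for the orthogonality conditions.
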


Combining this result with the estimates \eqref{estim k} and Taylor's theorem, we obtain an explicit formula for the solution to \eqref{kern equ vec} when $\delta=-1$:

\begin{corollary}\label{thm formula}
For $\delta=-1$, the solution to \eqref{kern equ vec} is given by
$$
\newk(x,\xi) = \sum_{r=0}^{+\infty}\sum_{s=0}^{+\infty} \frac{(-1)^r}{r ! s!} \seq_{r,s} (x-\xi)^r (x+\xi)^s,
\quad
k_i(x,\xi) = -
\int_{\frac{x-\xi}{2}}^x
\ps{w}{
\newk(\sigma,\sigma-x+\xi)
}{} \, d\sigma,
$$
for every $(x,\xi) \in \R^2$, where $(\seq_{r,s})_{r,s \in \N}$ is the sequence defined by \eqref{def seq rs} and the series is normally convergent on any compact set of $\R^2$.

\end{corollary}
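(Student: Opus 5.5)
The plan is to read off the formula from the Taylor expansion at the origin. By Theorem \ref{thm existence and analyticity}, the solution $K$ with $\delta=-1$ is $C^\infty$ on $\R^2$ and satisfies the estimate \eqref{estim k}. In particular, for each row $i$, the component $\newk$ obeys $\norm{\newk}_{C^s(\clos{V})}\leq C^s$ on any bounded $V$. Taylor's theorem with integral remainder at $(0,0)$ then shows that the remainder of order $N$ is bounded on compact sets by a constant times $(C\rho)^N/N!\cdot 2^N$, where $\rho$ bounds $\abs{x}+\abs{\xi}$. This tends to $0$, so on compact sets $\newk$ equals the normally convergent series
$$\newk(x,\xi)=\sum_{\gamma,\sigma\in\N}\frac{x^\gamma\xi^\sigma}{\gamma!\sigma!}\frac{\partial^{\gamma+\sigma}\newk}{\partial x^\gamma\partial\xi^\sigma}(0,0).$$
The derivatives in this series are given by Theorem \ref{thm derivatives}.

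The second step is to rewrite the series in the variables $X=x-\xi$ and $Y=x+\xi$. I would avoid manipulating binomials directly. Instead, consider the candidate $G(x,\xi)=\sum_{r,s}\frac{(-1)^r}{r!s!}\seq_{r,s}(x-\xi)^r(x+\xi)^s$ and compute its derivatives at $(0,0)$. Set $P(X,Y)=\sum_{r,s} \frac{(-1)^r}{r!s!}\seq_{r,s}X^rY^s$. Since $\partial_x=\partial_X+\partial_Y$ and $\partial_\xi=-\partial_X+\partial_Y$, we get
$$\partial_x^\gamma\partial_\xi^\sigma G(0,0)=\sum_{a,b}\binom{\gamma}{a}\binom{\sigma}{b}(-1)^{\sigma-b}\,\partial_X^{a+\sigma-b}\partial_Y^{\gamma-a+b}P(0,0),$$
where $\partial_X^{\alpha}\partial_Y^{\beta}P(0,0)=(-1)^\alpha\seq_{\alpha,\beta}$. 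The sign is then $(-1)^{a}$, and substituting $r=\gamma-a$, $s=b$ reproduces exactly \eqref{comp deriv newk}. Note that $\seq_{\alpha,\beta}$ has total degree $\alpha+\beta=\gamma+\sigma$, with second index $r+s$. Hence $G$ and $\newk$ have identical Taylor coefficients at the origin.

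To conclude $G=\newk$, I need $G$ to be a genuine convergent power series. The step I expect to be the main obstacle is justifying normal convergence directly from the recurrences \eqref{def seq}--\eqref{def seq rs}, which are nonlinear. I would sidestep this by inverting the binomial relation. The identity above expresses $\seq_{\cdot,\cdot}$ of total degree $N$ as linear combinations of the derivatives of $\newk$ of order $N$. The inverse transformation $\partial_X=(\partial_x-\partial_\xi)/2$, $\partial_Y=(\partial_x+\partial_\xi)/2$ gives $\abs{\seq_{r,s}}\leq \max_{\gamma+\sigma=r+s}\norm{\partial^{\gamma,\sigma}\newk(0,0)}\leq C^{r+s}$. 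From this, normal convergence on compact sets is immediate, $G$ is analytic on all of $\R^2$, and two entire power series with the same coefficients coincide.

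Finally, for $k_i$, integrate the second equation of \eqref{kern equ vec} along its characteristic $\sigma\mapsto(\sigma,\sigma-x+\xi)$. Because $\mu_i=\nu_i=1$, this characteristic has slope $1$. It meets the line $\xi=-x$, where $k_i(\cdot,-\cdot)=0$, at $\sigma=(x-\xi)/2$. This yields the stated integral formula for $k_i$.
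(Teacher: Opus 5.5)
\textbf{Overall.} Your argument is essentially the paper's. It uses Theorem \ref{thm derivatives}, the estimate \eqref{estim k} and Taylor's theorem for $\newk$. The formula for $k_i$ comes from integrating its equation along the slope-one characteristic down to the line $\xi=-x$. The convergence and identification steps, and the $k_i$ formula, all check out.

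\textbf{Sign in the coefficient matching.} After the substitution $r=\gamma-a$, $s=b$, your sign $(-1)^a$ becomes $(-1)^{\gamma-r}$, not $(-1)^r$. So your derivative formula for $G$ does \emph{not} literally reproduce \eqref{comp deriv newk}: the two differ by a factor $(-1)^\gamma$. Taken against the printed display, the Taylor coefficients of $G$ and $\newk$ would not match.

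\textbf{Your formula is the correct one.} The discrepancy is a sign slip in the printed \eqref{comp deriv newk}. Since $\newk(x,\xi)=h(\xi-x,x+\xi)$, one has $\partial_x=-\partial_t+\partial_\theta$. For $\gamma=1$, $\sigma=0$, the kernel equations at the origin use $\newk(x,x)=\seq^0$, $k_i(0,0)=0$ and $D_\mu-D_\nu=\Id_{\newn}$. They force $\partial_x\newk(0,0)=-A^\tr\seq^0=-\seq^1$. This agrees with your computation and with the corollary, whereas \eqref{comp deriv newk} would give $+\seq^1$.

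\textbf{How to fix it.} Do not assert agreement with the display. Instead, base the coefficient matching on what the proof of Theorem \ref{thm derivatives} actually establishes, namely $\seq_{r,s}=\partial_t^r\partial_\theta^s h(0,0)$. This also shortens your argument:
\begin{itemize}
\item $h$ is a linear change of variables of $\newk$, with $\partial_t,\partial_\theta=\tfrac12(\mp\partial_x+\partial_\xi)$, so it inherits \eqref{estim k}. This is exactly your inversion bound $|\seq_{r,s}|\leq C^{r+s}$.
\item Hence $h$ equals its Taylor series $\sum_{r,s}\seq_{r,s}t^r\theta^s/(r!s!)$, which converges normally on compact sets of $\R^2$.
\item Substituting $t=\xi-x$, $\theta=x+\xi$ gives the corollary directly, without the auxiliary candidate $G$ or the term-by-term coefficient comparison.
\end{itemize}
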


\begin{remark}
Explicit solutions to the kernel equations were also obtained in \cite[Section 3.4]{VK14} for systems of $n=2$ equations.
\end{remark}

\begin{proof}[Proof of Theorem \ref{thm derivatives}]
\begin{enumerate}
\item
To explain the special role played by $\delta=-1$, we start the computations with an arbitrary $\delta \neq 1$.
The first idea is to form a system involving only $\newk$ by expressing $k_i$ as a function of $\newk$:
$$
k_i(x,\xi) =
-\int_{\frac{x-\xi}{1-\delta}}^x
\ps{w}{
\newk(\sigma,\sigma-x+\xi)
 }{} \, d\sigma.
$$
As a result, $\newk$ solves
$$
\begin{dcases}
D_\mu\px{\newk}(x,\xi)
+D_\nu \pxi{\newk}(x,\xi)
+A^\tr\newk(x,\xi)
-\left(\int_{\frac{x-\xi}{1-\delta}}^x
\ps{w}{ \newk(\sigma,\sigma-x+\xi) }{}
 \, d\sigma\right) \seq^0
=0,
\\
\newk(x,x)=\seq^0.
\end{dcases}
$$

We now transform this system into a Cauchy problem by introducing the transformation
$$
h(t,\theta)=\newk\left(\frac{-t+\theta}{2},\frac{t+\theta}{2}\right).
$$
Using that $D_\mu-D_\nu=\Id_{\newn}$, we can check that $h$ satisfies the system
\begin{equation}\label{equ U}
\begin{dcases}
\frac{\partial h}{\partial t}(t,\theta)
=D \frac{\partial h}{\partial \theta}(t,\theta)
+A^\tr h(t,\theta)
+\left(\int_{-\frac{1+\delta}{1-\delta}t}^{\theta}  \ps{v}{ h(t,\eta)}{} \, d\eta\right) \seq^0,
\\
h(0,\theta)=\seq^0,
\end{dcases}
\end{equation}
where we recall that $D$ and $v$ are defined in \eqref{def D and v}.
Note as well that
$$
\frac{\partial^{\gamma+\sigma} \newk}{\partial x^{\gamma} \partial \xi^{\sigma}}(x,\xi)
=\sum_{r=0}^{\gamma} \sum_{s=0}^{\sigma} (-1)^r \binom{\gamma}{r} \binom{\sigma}{s} 
\frac{\partial^{\gamma+\sigma} h}{\partial t^{\gamma+\sigma-(r+s)} \partial \theta^{r+s}}(-x+\xi,x+\xi)
, \quad \forall \gamma,\sigma \in \N,
$$
so that the derivatives of $\newk$ at $(0,0)$ can be deduced from those of $h$.
They will be computed from \eqref{equ U} and we see that the computations considerably simplify if the lower bound of the integral vanishes, that is if we choose $\delta=-1$.
For this choice, we define
$$
\seq_{r,s}=\frac{\partial^{r+s} h}{\partial t^r \partial \theta^s}(0,0).
$$
We are going to show that it satisfies \eqref{def seq rs}.

\item
All along the rest of the proof, we will use the notation $c_{r,s}=\ps{v}{\seq_{r,s}}{}$.
First observe that system \eqref{equ U} (with $\delta=-1$) yields the following identities:
\begin{equation}\label{equ X}
\begin{dcases}
\seq_{r+1,0}=D\seq_{r,1}+A^\tr\seq_{r,0}, \\
\seq_{r+1,s}=D\seq_{r,s+1}+A^\tr\seq_{r,s}+ c_{r,s-1} \seq_{0,0}, \\
\seq_{0,s}=0,
\end{dcases}
\end{equation}
for every $r \geq 0$ and $s \geq 1$.
The second property in \eqref{def seq rs} is easily proved by induction on $r \geq 1$.
To establish the two other identities, it is sufficient to prove the following statement for any $N \geq 1$:
\begin{equation}\label{property N}
\begin{dcases}
\seq_{s+q,s}=
\sum_{\ell=0}^q c_{s-1+q-\ell,s-1} \seq_{\ell,0},
\quad \forall s \geq 1, \, \forall 0 \leq q \leq N,
\\
\seq_{r,0}=
A^\tr\seq_{r-1,0}
+D \sum_{\ell=0}^{r-2} c_{r-2-\ell,0} \seq_{\ell,0},
\quad \forall 2 \leq r \leq N+1.
\end{dcases}
\end{equation}

We prove it by induction.
For $N=1$, this is clear.
Indeed, for any $s \geq 1$, we have
\begin{align}
\seq_{s,s} &=D\seq_{s-1,s+1}+A^\tr\seq_{s-1,s}+c_{s-1,s-1}\seq_{0,0} \quad \text{(by \eqref{equ X})},
\nonumber
\\&
=c_{s-1,s-1} \seq_{0,0} \quad \text{(by the second property in \eqref{def seq rs})},
\label{id Xss}
\end{align}

\begin{align*}
\seq_{s+1,s} &=D\seq_{s,s+1}+A^\tr\seq_{s,s}+c_{s,s-1}\seq_{0,0} \quad \text{(by \eqref{equ X})},
\\&
=A^\tr\seq_{s,s} +c_{s,s-1}\seq_{0,0} \quad \text{(by the second property in \eqref{def seq rs})},
\\&
=c_{s-1,s-1} \seq_{1,0} +c_{s,s-1} \seq_{0,0}
\quad \text{(by \eqref{id Xss})},
\end{align*}

and
\begin{align*}
\seq_{2,0} &=D\seq_{1,1}+A^\tr\seq_{1,0}
\quad \text{(by \eqref{equ X})},
\\&
=c_{0,0} D\seq_{0,0}+A^\tr\seq_{1,0}
 \quad \text{(by \eqref{id Xss})}.
\end{align*}

Assume now that \eqref{property N} holds for $N \geq 1$ and let us prove it for $N+1$.
We first show that
\begin{equation}\label{equ q for N+1}
\seq_{s+N+1,s}=
\sum_{\ell=0}^{N+1} c_{s+N-\ell,s-1} \seq_{\ell,0}, \quad \forall s \geq 1.
\end{equation}

For any $s \geq 1$, we have
\begin{align*}
\seq_{s+N+1,s} &=
D\seq_{s+N,s+1} +A^\tr\seq_{s+N,s} +c_{s+N,s-1} \seq_{0,0}
\quad \text{(by \eqref{equ X})},
\\&
=D\sum_{\ell=0}^{N-1} c_{s+N-1-\ell,s} \seq_{\ell,0}
+A^\tr\sum_{\ell=0}^N c_{s-1+N-\ell,s-1} \seq_{\ell,0}
\\&
+c_{s+N,s-1}\seq_{0,0}
\quad \text{(by assumption \eqref{property N})},
\\&
=D\sum_{\ell=0}^{N-1} c_{s+N-1-\ell,s} \seq_{\ell,0}
+\sum_{r=2}^{N+1} c_{s+N-r,s-1} A^\tr \seq_{r-1,0}
\\&
+c_{s-1+N,s-1} \seq_{1,0}
+c_{s+N,s-1} \seq_{0,0}.
\end{align*}

Therefore, if we show the identity
\begin{equation}\label{remaing id}
\sum_{\ell=0}^{N-1} c_{s+N-1-\ell,s} \seq_{\ell,0}=
\sum_{r=2}^{N+1} c_{s+N-r,s-1}
\left(
\sum_{\ell=0}^{r-2} c_{r-2-\ell,0} \seq_{\ell,0}
\right),
\end{equation}
then we can use the second condition in \eqref{property N} to obtain the desired identity \eqref{equ q for N+1}.
To establish \eqref{remaing id}, we use the first condition in \eqref{property N} to deduce that, for every $0 \leq \ell \leq N-1$,
\begin{align*}
\ps{v}{\seq_{s+N-1-\ell,s}}{}
&=
\sum_{j=0}^{N-1-\ell} c_{s-1+N-1-\ell-j,s-1} c_{j,0}
\\&=
\sum_{r=\ell+2}^{N+1} c_{s+N-r,s-1} c_{r-2-\ell,0}.
\end{align*}
Finally, a simple change of order of summation leads to \eqref{remaing id}.
It remains to show the second identity in \eqref{property N} for $r=N+2$, namely
$$
\seq_{N+2,0}=
A^\tr\seq_{N+1,0}
+D \sum_{\ell=0}^N c_{N-\ell,0} \seq_{\ell,0}.
$$
We have
\begin{align*}
\seq_{N+2,0} &=D\seq_{N+1,1}+A^\tr\seq_{N+1,0}
\quad \text{(by \eqref{equ X})},
\\&
=D\sum_{\ell=0}^N c_{N-\ell,0} \seq_{\ell,0}
+A^\tr\seq_{N+1,0}
\quad \text{(by \eqref{property N})}.
\end{align*}

\end{enumerate}

\end{proof}

\begin{remark}\label{rem formula other BC}
Theoretically, we can also compute all the derivatives at $(0,0)$ of the solution to \eqref{kern equ vec} for arbitrary $\delta \neq 1$.
This can be done by taking derivatives and inverting some matrix.
However, the size of this matrix grows with the order of derivatives and computations rapidly become more and more complicated.
It seems difficult with such a strategy to obtain a suitable formula for arbitrary $\delta$.
At the same time, we see from \eqref{equ U} that a different choice of $\delta$ means more derivatives to be computed, as for instance with $\delta=0$ which leads to an integral of the form $\int_{-t}^{\theta}$.
In addition to that, we recall that one choice of $\delta$ is actually sufficient for the purposes of this paper (Remark \ref{rem one choice is enough}).
\end{remark}

\section{Study of the orthogonality conditions}\label{sect hyperplane}

In this section, we use the computations obtained in the previous section to study the orthogonality conditions \eqref{orth cond newk}.
We start with the conclusion of the proof of our first main result.

\begin{proof}[Proof of Theorem \ref{thm M}]
We recall that, from the results of the previous sections, we only have to show the equivalence between the orthogonality conditions \eqref{orth cond newk} and \eqref{cond seq}.
First observe that, from the definition \eqref{def seq rs} of the sequence $(\seq_{r,s})_{r,s \in \N}$, it is clear that \eqref{cond seq} is equivalent to
$$\ps{b}{\seq_{r,s}}{}=0, \quad \forall r,s \in \N.$$
We can check that this condition is equivalent to \eqref{orth cond newk} using \eqref{comp deriv newk} and \eqref{def seq rs}.
\end{proof}

\begin{remark}\label{rem equiv char}
The proof above and the analyticity of $\newk$ in $\R^2$ shows that the following four properties are in fact equivalent:
\begin{enumerate}
\item\label{prop equiv char i1}
$\ps{b}{\newk(x,\xi)}{}=0$ for every $(x,\xi) \in \R^2$.

\item\label{prop equiv char i2}
$\ps{b}{\newk(x,0)}{}=0$ for every $x \in \R$.

\item\label{prop equiv char i3}
$\ps{b}{\seq_{r,0}}{}=0$ for every $r \in \N$.

\item\label{prop equiv char i4}
$\ps{b}{\seq_{r,s}}{}=0$ for every $r,s \in \N$.
\end{enumerate}
\end{remark}

We are now going to study the orthogonality conditions \eqref{cond seq} for the sequence $\seqset$ and prove our second main result.
For the rest of Section \ref{sect hyperplane}, the matrices $A,D \in \R^{\newn \times \newn}$ and the vectors $\seq^0, v, b \in \R^{\newn}$ can in fact be arbitrary.
We emphasize that $n$ is also arbitrary, it is only during the proof of Theorem \ref{thm bounds Nseq} that we will assume that $n=3$ or $n=4$.

\subsection{Some invariant subspaces of the sequence}\label{sect inv sub}

We start the general study of the orthogonality conditions \eqref{cond seq} with the description of two simple invariant subspaces of the sequence $\seqset$.

\begin{proposition}\label{prop inv sub}
Assume that $\seq^0 \in E$ for some $E \subset \R^{\newn}$ satisfying one of the following two conditions:
\begin{align}
& A^\tr(E) \subset E, \quad E \subset \ker v^\tr.
\label{first E}
\\
& A^\tr(E) \subset E, \quad D(E) \subset E. \label{second E}
\end{align}
Then, $\seq^r \in E$ for every $r \in \N$.
\end{proposition}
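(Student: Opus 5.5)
We argue by strong induction on $r$, using the recurrence \eqref{def seq}. We assume that $E$ is a linear subspace of $\R^{\newn}$; this is the natural reading, since invariance under $A^\tr$ and $D$ is a linear notion. The base case $\seq^0 \in E$ is the hypothesis. For $r=1$, we have $\seq^1=A^\tr\seq^0$, which lies in $E$ because $A^\tr(E)\subset E$ in both cases \eqref{first E} and \eqref{second E}.

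For the inductive step, fix $r \geq 2$ and assume $\seq^\ell \in E$ for every $0 \leq \ell \leq r-1$. The term $A^\tr\seq^{r-1}$ lies in $E$ by $A^\tr$-invariance. It remains to handle
$$
D \sum_{\ell=0}^{r-2} \ps{v}{\seq^{r-2-\ell}}{} \seq^{\ell},
$$
and here the argument splits according to which condition holds.

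Under \eqref{first E}, each index satisfies $0 \leq r-2-\ell \leq r-2$, so by the induction hypothesis $\seq^{r-2-\ell} \in E \subset \ker v^\tr$. Hence every scalar $\ps{v}{\seq^{r-2-\ell}}{}$ vanishes, the whole $D$-term is zero, and $\seq^r = A^\tr\seq^{r-1} \in E$.

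Under \eqref{second E}, the sum $\sum_{\ell} \ps{v}{\seq^{r-2-\ell}}{}\seq^\ell$ is a linear combination of $\seq^0,\dots,\seq^{r-2}$, which all lie in $E$. Since $E$ is a subspace, the sum lies in $E$, and applying $D$ keeps it in $E$ because $D(E)\subset E$. Thus $\seq^r \in E$ as a sum of two elements of $E$.

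There is no real obstacle. The one point to watch is that the proof needs linearity of $E$, so if the statement is read with $E$ only a subset, one should replace $E$ by its span.
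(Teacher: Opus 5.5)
Your proof is correct and follows the paper's argument essentially verbatim: induction on $r$, with $A^\tr\seq^{r-1}\in E$ by invariance, and the quadratic term handled either by the vanishing of every $\ps{v}{\seq^{\ell}}{}$ under \eqref{first E} or by $D$-invariance under \eqref{second E}. The paper applies $D$ to each $\seq^{\ell}$ inside the sum, whereas you apply it to the whole sum, which amounts to the same thing; your reading of $E$ as a linear subspace matches the paper's implicit one, though for a mere subset, passing to $\Span E$ would only yield $\seq^r\in\Span E$, not $\seq^r\in E$.
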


\begin{proof}
We prove the result by induction on $r$.
For $r=0$ this is trivial and for $r=1$ this follows from the definition $\seq^1=A^\tr\seq^0$ and the property $A^\tr(E) \subset E$.
Assume then that $\seq^\ell \in E$ for every $0 \leq \ell \leq r$ for some $r \geq 1$ and let us show that $\seq^{r+1} \in E$.
Since $r+1 \geq 2$, we have
$$
\seq^{r+1}=
A^\tr\seq^r
+\sum_{\ell=0}^{r-1} \ps{v}{ \seq^{r-1-\ell}}{} D\seq^{\ell}.
$$

Clearly, the first part $A^\tr\seq^r$ belongs to $E$ since $\seq^r \in E$ and $A^\tr(E) \subset E$.
The remaining part also belongs to $E$ since either $\ps{v}{\seq^{\ell}}{}=0$ for every $0 \leq \ell \leq r$ (if $E \subset \ker v^\tr $) or $D\seq^{\ell} \in E$ for every $0 \leq \ell \leq r$ (if $D(E) \subset E$).
\end{proof}

If we can find a subspace $E$ such as in the previous proposition and which is in addition included in $\ker b^\tr$, then we see that the whole sequence will be guaranteed to stay in $\ker b^\tr$.

\subsection{Characterization of rank one sequences}\label{sect rank 1}

In this section, we characterize when the rank of $\seqset$ is equal to one, and we use it to prove Theorem \ref{thm bounds Nseq} in the case $n=3$.
We recall that, by definition, $\rank (\seq^r)_{r \in S}=\dim \Span \ens{\seq^r \st r \in S}$ for any $S \subset \N$.

From now on, it will be convenient to use the following notation:
$$
E_s=\Span\ens{\seq^r \st r \leq s}, \quad \forall s \in \N.
$$

First of all, it is clear that
$$
\rank \seqset=1
\quad \Longleftrightarrow \quad
\left(\seq^0 \neq 0, \quad \seq^r \in \Ezero, \quad \forall r \geq 1\right).
$$
We have the following characterization:

\begin{proposition}\label{thm caract E1}
The following three conditions are equivalent:
\begin{enumerate}
\item\label{caract E1 i1}
$\seq^r \in \Ezero$ for every $r \geq 1$.

\item\label{caract E1 i2}
$\seq^r \in \Ezero$ for every $r \in \ens{1,2}$.

\item\label{caract E1 i3}
$\Ezero$ satisfies \eqref{first E} or \eqref{second E}.
\end{enumerate}

\end{proposition}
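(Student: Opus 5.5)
The cycle of implications will be 1 $\Rightarrow$ 2 $\Rightarrow$ 3 $\Rightarrow$ 1. The implication 1 $\Rightarrow$ 2 is trivial. The implication 3 $\Rightarrow$ 1 is exactly Proposition \ref{prop inv sub} applied with $E=\Ezero$, since $\seq^0 \in \Ezero$. So all the work is in 2 $\Rightarrow$ 3.

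Assume $\seq^1,\seq^2 \in \Ezero$. If $\seq^0=0$ then $\Ezero=\ens{0}$ satisfies \eqref{first E} trivially, so we may assume $\seq^0 \neq 0$. Then $\Ezero=\Span\ens{\seq^0}$ is a line.

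The condition $\seq^1=A^\tr\seq^0 \in \Ezero$ gives $A^\tr\seq^0=a\seq^0$ for some $a \in \R$, hence $A^\tr(\Ezero)\subset \Ezero$.

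Next we use the recurrence \eqref{def seq} at $r=2$:
$$
\seq^2=A^\tr\seq^1+\ps{v}{\seq^0}{}D\seq^0=a^2\seq^0+\ps{v}{\seq^0}{}D\seq^0 .
$$
Since $\seq^2\in\Ezero$, we get $\ps{v}{\seq^0}{}D\seq^0\in\Ezero$, and we split into two cases.
\begin{itemize}
\item If $\ps{v}{\seq^0}{}=0$, then $\Ezero\subset\ker v^\tr$ because $\Ezero$ is spanned by $\seq^0$. So \eqref{first E} holds.
\item Otherwise $D\seq^0\in\Ezero$, i.e. $D(\Ezero)\subset\Ezero$, and \eqref{second E} holds.
\end{itemize}
This closes the cycle.

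There is no real obstacle here. The only points needing care are the degenerate case $\seq^0=0$ and the observation that, because $\Ezero$ is one-dimensional, both conditions (invariance under $A^\tr$ or $D$, and orthogonality to $v$) only need to be checked on the single generator $\seq^0$.
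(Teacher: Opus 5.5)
Your proposal is correct and follows the paper's proof essentially verbatim: you get $A^\tr(\Ezero)\subset\Ezero$ from $\seq^1\in\Ezero$, and then from $\seq^2\in\Ezero$ you get $\ps{v}{\seq^0}{}D\seq^0\in\Ezero$, which splits into \eqref{first E} or \eqref{second E}. Your separate treatment of the case $\seq^0=0$ is harmless but not needed, since the general argument already covers it.
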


\begin{proof}
The implication \ref{caract E1 i1} $\Longrightarrow$ \ref{caract E1 i2} is trivial.
The implication \ref{caract E1 i3} $\Longrightarrow$ \ref{caract E1 i1} follows from Proposition \ref{prop inv sub}.
Let us show that \ref{caract E1 i2} $\Longrightarrow$ \ref{caract E1 i3}.
We write
$$
\seq^r=\alpha_r \seq^0, \quad r=1,2,
$$
for some $\alpha_r \in \R$.
The condition for $r=1$ gives $A^\tr\seq^0=\alpha_1 \seq^0$, which is equivalent to $A^\tr(\Ezero) \subset \Ezero$.
The condition for $r=2$ gives $A^\tr\seq^1 + \ps{v}{\seq^0}{} D\seq^0=\alpha_2 \seq^0$, which implies $\ps{v}{\seq^0}{} D\seq^0 \in \Ezero$, that is either $\Ezero \subset \ker v^\tr$ or $D(\Ezero) \subset \Ezero$.
This establishes the desired equivalences.

\end{proof}

\begin{proof}[Proof of Theorem \ref{thm bounds Nseq} (case $n=3$)]
Assume that
$$
\ps{b}{\seq^r}{}=0, \quad \forall r \in \ens{0,1,2}.
$$
Since $\newn=2$, we have $\dim \ker b^\tr =1$ and thus
$$\rank (\seq^r)_{r \in \ens{0,1,2}} \leq 1.$$
As a result, $\seq^r \in \Ezero$ for $r \in \ens{1,2}$ and it follows that $\seq^r \in \Ezero$ for every $r \geq 1$ by Proposition \ref{thm caract E1}.
\end{proof}

\subsection{Characterization of rank two sequences}\label{sect rank 2}

In this section, we characterize when the rank of $\seqset$ is equal to two, and we use it to prove Theorem \ref{thm bounds Nseq} in the case $n=4$.
The study of rank two sequences is considerably more difficult than for rank one.

We start with the following simple observation.

\begin{proposition}\label{prop rank 2 basic}
Let $\ens{0,1,2} \subset S \subset \N$.
We have $\rank (\seq^r)_{r \in S}=2$ if, and only if, we have one of the following two conditions:
\begin{enumerate}
\item\label{prop rank 2 i1}
$\rank(\seq^0|\seq^1)=1$, $\rank(\seq^0|\seq^2)=2$ and $\seq^r \in \Etwo$ for every $r \in S \setminus \ens{0,1,2}$.

\item\label{prop rank 2 i2}
$\rank(\seq^0|\seq^1)=2$ and $\seq^r \in \Eone$ for every $r \in S \setminus \ens{0,1}$.
\end{enumerate}

\end{proposition}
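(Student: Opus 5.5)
This is essentially linear algebra on the first three terms of the sequence, together with the remark that a subfamily's span is contained in the full span. Throughout, $S \supset \ens{0,1,2}$ is fixed and $E_S=\Span\ens{\seq^r \st r \in S}$, so that $\rank(\seq^r)_{r\in S}=\dim E_S$ and $\Eone \subset \Etwo \subset E_S$.

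\emph{Sufficiency.} Assume condition \ref{prop rank 2 i1}. Then $\rank(\seq^0|\seq^2)=2$ gives $\dim \Etwo \geq 2$. Since $\rank(\seq^0|\seq^1)=1$ and $\seq^0 \neq 0$, we have $\seq^1 \in \Ezero$, so $\Etwo=\Span\ens{\seq^0,\seq^2}$ and $\dim\Etwo=2$. The hypothesis $\seq^r\in\Etwo$ for $r \in S\setminus\ens{0,1,2}$ then gives $E_S=\Etwo$, so the rank is $2$.

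Assume condition \ref{prop rank 2 i2}. Then $\dim\Eone=2$, and $\seq^r\in\Eone$ for $r\in S\setminus\ens{0,1}$ gives $E_S=\Eone$, again of dimension $2$.

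\emph{Necessity.} Assume $\dim E_S=2$.
\begin{itemize}
\item If $\rank(\seq^0|\seq^1)=2$, then $\Eone\subset E_S$ with equal dimensions, so $E_S=\Eone$. Every $\seq^r$ with $r\in S$ lies in $\Eone$, which is condition \ref{prop rank 2 i2}.
\item If $\rank(\seq^0|\seq^1)\le 1$, I first rule out $\seq^0=0$. In that case $\seq^1=A^\tr\seq^0=0$, and the recurrence \eqref{def seq} shows by induction that $\seq^r=0$ for all $r$: each term of the sum contains either a factor $\ps{v}{\seq^{\cdot}}{}$ or $\seq^\ell$ with some earlier index equal to zero. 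Indeed, all earlier terms vanish, so every term is zero. This contradicts rank $2$. Hence $\seq^0\neq0$, $\rank(\seq^0|\seq^1)=1$, and $\seq^1\in\Ezero$.
\end{itemize}

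It remains to show, in the second case, that $\rank(\seq^0|\seq^2)=2$. Suppose instead that $\seq^2\in\Ezero$. Then condition \ref{caract E1 i2} of Proposition \ref{thm caract E1} holds, so by that proposition $\seq^r\in\Ezero$ for all $r\ge1$, and $E_S=\Ezero$ has dimension $1$, a contradiction. Therefore $\rank(\seq^0|\seq^2)=2$ and $\Etwo=\Span\ens{\seq^0,\seq^2}$ has dimension $2$. Since $\Etwo\subset E_S$ with equal dimensions, $E_S=\Etwo$, and all $\seq^r$ with $r \in S\setminus\ens{0,1,2}$ lie in $\Etwo$. This is condition \ref{prop rank 2 i1}.

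The only non-bookkeeping step is this last one: excluding $\seq^2\in\Ezero$ when $\seq^1\in\Ezero$. It is handled directly by Proposition \ref{thm caract E1}, which is why $S$ is required to contain $\ens{0,1,2}$.
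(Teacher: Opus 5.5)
Your proof is correct and takes the same approach as the paper. The only substantive step is ruling out $\rank(\seq^0|\seq^1)=\rank(\seq^0|\seq^2)=1$, which you and the paper both do by invoking Proposition \ref{thm caract E1} to force $\seq^r\in\Ezero$ for all $r\geq1$. You also write out the dimension bookkeeping in both directions and the degenerate case $\seq^0=0$ (where the whole sequence vanishes), which the paper leaves implicit.
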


\begin{proof}
We only need to observe that the situation $\rank(\seq^0|\seq^1)=\rank(\seq^0|\seq^2)=1$ does not happen.
Indeed, in that situation we have $\seq^1,\seq^2 \in \Ezero$ and thus $\seq^r \in \Ezero$ for every $r \geq 1$ by Proposition \ref{thm caract E1}, which shows that the sequence cannot be of rank two.
\end{proof}

We now characterize both conditions of Proposition \ref{prop rank 2 basic}.

\begin{proposition}\label{prop rank 2 basic i1}
Assume that $\rank(\seq^0|\seq^1)=1$ and $\rank(\seq^0|\seq^2)=2$.
Then, the following three conditions are equivalent:
\begin{enumerate}
\item\label{prop rank 2 basic i1 i1}
$\seq^r \in \Etwo$ for every $r \geq 3$.

\item\label{prop rank 2 basic i1 i2}
$\seq^r \in \Etwo$ for every $r \in \ens{3,4}$.

\item\label{prop rank 2 basic i1 i3}
$\Etwo$ satisfies \eqref{second E}.
\end{enumerate}

\end{proposition}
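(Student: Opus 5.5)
The plan is to use the two rank assumptions to get an explicit basis of $\Etwo$ in terms of $\seq^0$, $A$ and $D$. Then I would show that the two membership conditions $\seq^3,\seq^4 \in \Etwo$ translate exactly into $A^\tr$- and $D$-invariance of $\Etwo$.

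The implication \ref{prop rank 2 basic i1 i3} $\Longrightarrow$ \ref{prop rank 2 basic i1 i1} is Proposition \ref{prop inv sub} applied with $E=\Etwo$, since $\seq^0 \in \Etwo$. The implication \ref{prop rank 2 basic i1 i1} $\Longrightarrow$ \ref{prop rank 2 basic i1 i2} is trivial. The real work is \ref{prop rank 2 basic i1 i2} $\Longrightarrow$ \ref{prop rank 2 basic i1 i3}.

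\emph{Basis of $\Etwo$.} Since $\rank(\seq^0|\seq^2)=2$, we have $\seq^0 \neq 0$. Then $\rank(\seq^0|\seq^1)=1$ gives $\seq^1=A^\tr\seq^0=\alpha\seq^0$ for some $\alpha\in\R$. Write $c_r=\ps{v}{\seq^r}{}$. We get $c_1=\alpha c_0$ and
$$\seq^2=A^\tr\seq^1+c_0 D\seq^0=\alpha^2\seq^0+c_0D\seq^0.$$
Since $\seq^2\notin\Ezero$, necessarily $c_0\neq 0$ and $D\seq^0\notin\Ezero$. Hence
$$\Etwo=\Span\ens{\seq^0,D\seq^0}.$$
The invariances \eqref{second E} for $\Etwo$ therefore reduce to two facts: $A^\tr D\seq^0\in\Etwo$ and $D^2\seq^0\in\Etwo$. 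The remaining ones, $A^\tr\seq^0\in\Etwo$ and $D\seq^0\in\Etwo$, hold already.

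\emph{Using $\seq^3$.} By \eqref{def seq},
$$\seq^3=A^\tr\seq^2+D(c_1\seq^0+c_0\seq^1)=\alpha^3\seq^0+c_0A^\tr D\seq^0+2\alpha c_0D\seq^0.$$
Since $c_0\neq0$, the condition $\seq^3\in\Etwo$ is equivalent to $A^\tr D\seq^0\in\Etwo$, so $A^\tr(\Etwo)\subset\Etwo$.

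\emph{Using $\seq^4$.} We have
$$\seq^4=A^\tr\seq^3+D(c_2\seq^0+c_1\seq^1+c_0\seq^2).$$
The first term lies in $\Etwo$ by the $A^\tr$-invariance just obtained. Substituting $\seq^1=\alpha\seq^0$ and the expression for $\seq^2$, the bracket equals (scalar)$\cdot\seq^0+c_0^2D\seq^0$. Applying $D$ gives (scalar)$\cdot D\seq^0+c_0^2D^2\seq^0$. Hence $\seq^4\in\Etwo$ forces $c_0^2D^2\seq^0\in\Etwo$, so $D^2\seq^0\in\Etwo$ and $D(\Etwo)\subset\Etwo$.

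The only delicate step is the bookkeeping in $\seq^4$. One must check that every contribution other than $c_0^2D^2\seq^0$ is already in $\Etwo$; this needs $A^\tr$-invariance to be established before treating $\seq^4$. The nonvanishing $c_0\neq0$, extracted from the rank assumption, is what allows both deductions.
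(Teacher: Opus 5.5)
Your proof is correct and follows the same route as the paper. You use the rank assumptions to get $\ps{v}{\seq^0}{}\neq 0$ and $\Etwo=\Span\ens{\seq^0,D\seq^0}$, then $\seq^3\in\Etwo$ to obtain $A^\tr$-invariance, and then, with that in hand, $\seq^4\in\Etwo$ to obtain $D$-invariance — exactly the computations the paper leaves to the reader with ``we can check''.
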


\begin{proof}
The implication \ref{prop rank 2 basic i1 i1} $\Longrightarrow$ \ref{prop rank 2 basic i1 i2} is trivial.
The implication \ref{prop rank 2 basic i1 i3} $\Longrightarrow$ \ref{prop rank 2 basic i1 i1} follows from Proposition \ref{prop inv sub}.
Let us show that \ref{prop rank 2 basic i1 i2} $\Longrightarrow$ \ref{prop rank 2 basic i1 i3}.
From the definition of $\seq^2$ and the rank assumptions, we have $\ps{v}{\seq^0}{} \neq 0$ and $D\seq^0 \in \Span\ens{\seq^0,\seq^2}$.
Using these facts, we can first check that the condition $\seq^3 \in \Etwo$ gives $A^\tr(\Etwo) \subset \Etwo$ and then that the condition $\seq^4 \in \Etwo$ yields $D(\Etwo) \subset \Etwo$.
\end{proof}

The second condition in Proposition \ref{prop rank 2 basic} is more difficult to characterize.

\begin{proposition}\label{thm caract E2}
Assume that $\rank(\seq^0|\seq^1)=2$.
Then, the following two conditions are equivalent:
\begin{enumerate}
\item\label{caract E2 i1}
$\seq^r \in \Eone$ for every $r \geq 2$.

\item\label{caract E2 i2}
$\seq^r \in \Eone$ for every $r \in \ens{2,\ldots, 5}$.
\end{enumerate}

\end{proposition}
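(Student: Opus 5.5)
The implication \ref{caract E2 i1} $\Longrightarrow$ \ref{caract E2 i2} is trivial, so the work is in \ref{caract E2 i2} $\Longrightarrow$ \ref{caract E2 i1}. I will prove by strong induction on $r \geq 6$ that $\seq^r \in \Eone$, assuming $\seq^\ell \in \Eone$ for all $\ell \leq r-1$. The base information comes from $\seq^2,\ldots,\seq^5 \in \Eone$. The idea is to extract from these four conditions structural information on how $A^\tr$ and $D$ act on $\Eone=\Span\ens{\seq^0,\seq^1}$, together with the values $c_\ell:=\ps{v}{\seq^\ell}{}$. I will then show that this information propagates through the recurrence \eqref{def seq}.

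\textbf{Step 1 (extracting relations).} Write $\seq^\ell=a_\ell\seq^0+b_\ell\seq^1$ for $\ell\le 5$, so every $c_\ell$ is a linear combination of $c_0,c_1$. The condition $\seq^2\in\Eone$ gives $A^\tr\seq^1+c_0 D\seq^0\in\Eone$. Since $\seq^1=A^\tr\seq^0$, the condition $A^\tr(\Eone)\subset\Eone$ reduces to $A^\tr\seq^1\in\Eone$. I will therefore split into cases.

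\emph{Case (a): $A^\tr(\Eone)\subset\Eone$.} Then $c_0D\seq^0\in\Eone$. Next, $\seq^3 = A^\tr\seq^2 + c_1D\seq^0+c_0D\seq^1$, and since $A^\tr\seq^2\in\Eone$ this gives $c_1D\seq^0+c_0D\seq^1\in\Eone$. Continuing with $\seq^4,\seq^5$ yields the linear conditions $\sum_{\ell} c_{r-2-\ell}D\seq^\ell\in\Eone$ for $r=2,\ldots,5$. Expanding each $\seq^\ell$ in $\seq^0,\seq^1$, these become a triangular-type system in the unknowns $D\seq^0$ and $D\seq^1$ modulo $\Eone$. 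The generic subcases are:
\begin{itemize}
\item $D(\Eone)\subset\Eone$, which gives \eqref{second E};
\item $c_0=c_1=0$, which gives \eqref{first E};
\item a degenerate subcase where $c_0=0$, $c_1\neq0$. Here I expect to derive $D\seq^0\in\Eone$ and then need $D\seq^1$ modulo $\Eone$ to be killed by $c_1$ at later orders.
\end{itemize}
In the first two subcases, Proposition~\ref{prop inv sub} finishes the proof.

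\emph{Case (b): $A^\tr\seq^1\notin\Eone$.} Then $c_0\neq 0$ and $D\seq^0\equiv -c_0^{-1}A^\tr\seq^1 \pmod{\Eone}$. The higher conditions should then force a relation such as "$A^\tr+\kappa D$ leaves $\Eone$ invariant" for a scalar $\kappa$, combined with a constraint on $v$ restricted to $\Eone$.

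\textbf{Step 2 (propagation).} In each surviving case I will identify an invariant structure: a subspace condition, or a pair of linear relations expressing $A^\tr\seq^1$ and $D\seq^\ell$ modulo $\Eone$ through one common vector $z$ with scalar coefficients. This makes the recurrence, taken modulo $\Eone$, equal to $z$ times a scalar sequence satisfying a convolution recursion built from the $c_\ell$. The four conditions $r=2,\ldots,5$ should be exactly enough to force that scalar sequence to vanish identically, because its generating function satisfies a quadratic equation whose coefficients are fixed by the first few terms. I will phrase this with generating functions: if $\Psi(z)=\sum_r\seq^r z^r$, then \eqref{def seq} reads $\Psi=\seq^0+zA^\tr\Psi+z^2 \ps{v}{\Psi}{}D\Psi$. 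Projecting onto $\R^{\newn}/\Eone$ then gives a quadratic functional equation in a scalar series, whose vanishing is determined by finitely many coefficients.

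\textbf{Main obstacle.} The main obstacle is the case analysis in Step 1, especially Case (b) and the degenerate subcases. There one must show that conditions at orders $4$ and $5$ close the system, that is, that no new independent direction can first appear at order $6$ or beyond. I would first try to settle it with the generating-function projection, handling degenerate subcases by direct computation of $\seq^6$ and then induction.
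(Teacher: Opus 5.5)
\textbf{There is a genuine gap: the central case is not proved.} The direction (i)$\Rightarrow$(ii) and your Case (a) are fine. Case (a) in fact closes at order $4$:
\begin{itemize}
\item if $c_0\neq0$, then $\psi^2,\psi^3\in E_1$ give $D\psi^0,D\psi^1\in E_1$;
\item if $c_0=0\neq c_1$, then $\psi^3\in E_1$ gives $D\psi^0\in E_1$, and $\psi^4\in E_1$ gives $c_1D\psi^1\in E_1$;
\item $c_0=c_1=0$ is \eqref{first E}.
\end{itemize}
So in the first two subcases \eqref{second E} holds.

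Case (b) is the whole content of the proposition, and there you only say what ``should'' happen. The structure you guess cannot occur. In Case (b), $\psi^2\in E_1$ forces $c_0\neq0$ and $D\psi^0\notin E_1$. Hence $(A^\tr+\kappa D)\psi^0=\psi^1+\kappa D\psi^0\in E_1$ only for $\kappa=0$, and $A^\tr$ does not leave $E_1$ invariant by assumption.

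Your projection idea does give the right first reduction. Let $\pi$ be the quotient map modulo $E_1$ and $z=\pi(D\psi^0)\neq0$. Then $\psi^2,\psi^3\in E_1$ give $\pi(A^\tr\psi^1)=-c_0z$ and $\pi(D\psi^1)=(\beta_2-\bar c_1)z$. If $\psi^\ell=\alpha_\ell\psi^0+\beta_\ell\psi^1$ for $\ell<r$, then $\psi^r\in E_1$ if and only if $\beta_{r-1}=\sum_{\ell=0}^{r-2}\bar c_{r-2-\ell}(\alpha_\ell+(\beta_2-\bar c_1)\beta_\ell)$, where $\bar c_s=c_s/c_0=\alpha_s+\bar c_1\beta_s$. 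But this is not a closed scalar equation. It involves the $E_1$-coordinates $(\alpha_\ell,\beta_\ell)$, and their evolution is governed by the $E_1$-components of $A^\tr\psi^1$, $D\psi^0$, $D\psi^1$. That is exactly what the projection discards. So your projected quadratic equation, as stated, gives no reason why orders $4$ and $5$ suffice rather than some larger order.

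\textbf{What is missing} is the following, in three steps.
\begin{enumerate}
\item For $r\geq3$ write $\psi^r=\beta_{r-1}\alpha_2\psi^0+(\alpha_{r-1}+\beta_{r-1}\beta_2)\psi^1+(D\psi^0|D\psi^1)f_{r-1}$, with $f_{r-1}\in\R^2$ as in \eqref{def u f}. The projected condition says exactly that $f_{r-1}=\kappa_rf_2$ with $\kappa_r=\sum_{\ell=0}^{r-2}\bar c_{r-2-\ell}\beta_\ell$. Hence $(D\psi^0|D\psi^1)f_{r-1}=\kappa_r\big((\alpha_3-\beta_2\alpha_2)\psi^0+(\beta_3-\alpha_2-\beta_2^2)\psi^1\big)$. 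This yields a closed recursion for $(\alpha_r,\beta_r)$ depending only on $\alpha_2,\beta_2,\alpha_3,\beta_3$; this is \eqref{def seq ur}.
\item The conditions at $r=4$ and $r=5$ determine $\beta_3$ and then $\alpha_3$ explicitly in terms of $\alpha_2,\beta_2,\bar c_1$; this is \eqref{cond a3b3}.
\item For these values, $\gamma_\ell=\alpha_\ell+t\beta_\ell$ with $t=\beta_2-\bar c_1$ satisfies $\gamma_{\ell+1}=t\alpha_\ell+(\alpha_2+t\beta_2)\beta_\ell$. This is what propagates the condition to every $r$.
\end{enumerate}

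Once these are in place, your generating-function idea does work. Set $\alpha(z)=\sum\alpha_rz^r$, $\beta(z)=\sum\beta_rz^r$, $\bar c=\alpha+\bar c_1\beta$ and $\gamma=\alpha+t\beta$. The closed recursion and the values of $\alpha_3,\beta_3$ give $\beta-z\bar c\gamma=tz(\beta-z\bar c\gamma)$. Hence $(1-tz)(\beta-z\bar c\gamma)=0$, so $\beta=z\bar c\gamma$ in $\R[[z]]$. Its coefficient of $z^{r-1}$ is precisely the condition for $\psi^r\in E_1$. Without the first two steps, however, your plan contains no argument for the key implication.
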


Before proving Proposition \ref{thm caract E2}, we prove our second main result.

\begin{proof}[Proof of Theorem \ref{thm bounds Nseq} (case $n=4$)]
Assume that
$$
\ps{b}{\seq^r}{}=0, \quad \forall r \in \ens{0,\ldots,5}.
$$
Since $\newn=3$, we have $\dim \ker b^\tr =2$ and thus
$$\rank (\seq^r)_{r \in \ens{0,\ldots,5}} \leq 2.$$
If the rank is in fact less than or equal to $1$, then we conclude as in the proof of Theorem \ref{thm bounds Nseq} in the case $n=3$.
If the rank is exactly equal to $2$,
then the conclusion follows from Propositions \ref{prop rank 2 basic}, \ref{prop rank 2 basic i1} and \ref{thm caract E2}.
\end{proof}

We now turn to the proof of the key proposition.

\begin{proof}[Proof of Proposition \ref{thm caract E2}]
If $\Eone$ satisfies \eqref{first E} or \eqref{second E}, then the result follows from Proposition \ref{prop inv sub}.
Therefore, from now on, we assume that $E_1$ does not meet any of these conditions.

\begin{enumerate}
\item
Let $N \geq 4$ be arbitrary and consider the property: for every $r \in \ens{2,\ldots,N}$, there exist $\alpha_r,\beta_r \in \R$ such that
\begin{equation}\label{decomp psik}
\seq^r=\alpha_r \seq^0 + \beta_r \seq^1.
\end{equation}
Let us also set
\begin{equation}\label{def seq 01}
\begin{aligned}
& \alpha_0=1, \quad \beta_0=0, \\
& \alpha_1=0, \quad \beta_1=1,
\end{aligned}
\end{equation}
so that the previous identity is always true for $r=0,1$.
Using the definition \eqref{def seq} of the sequence we see that, for $r \geq 2$, identity \eqref{decomp psik} is equivalent to
\begin{align*}
\alpha_r \seq^0 + \beta_r \seq^1
=&
\alpha_{r-1} A^\tr\seq^0 + \beta_{r-1} A^\tr\seq^1
\\&
+\left(\sum_{\ell=0}^{r-2} c_{r-2-\ell} \alpha_{\ell}\right) D\seq^0
+\left(\sum_{\ell=0}^{r-2} c_{r-2-\ell} \beta_{\ell}\right) D\seq^1,
\end{align*}
where we introduced, for every $s \in \ens{0,\ldots,N}$, 
$$c_s=\alpha_s \ps{v}{\seq^0}{} + \beta_s \ps{v}{\seq^1}{}.$$

Let us eliminate the terms $A^\tr\seq^0, A^\tr\seq^1$.
By definition of the sequence, we have $A^\tr\seq^0=\seq^1$.
On the other hand, condition \eqref{decomp psik} for $r=2$ yields
\begin{equation}\label{E2 cond r2}
A^\tr\seq^1=\alpha_2 \seq^0 +\beta_2 \seq^1 -c_0D\seq^0.
\end{equation}

As a result, for $r \geq 3$, identity \eqref{decomp psik} is equivalent to
\begin{equation}\label{cond any r}
(\seq^0| \seq^1) u_r
=(D\seq^0| D\seq^1) f_{r-1}
,
\end{equation}
where $u_r, f_{r-1} \in \R^2$ are the vectors defined by
\begin{equation}\label{def u f}
u_r=
\begin{pmatrix}
\alpha_r
-\beta_{r-1}\alpha_2
\\
\beta_r
-\left(\alpha_{r-1} +\beta_{r-1} \beta_2\right)
\end{pmatrix},
\quad
f_{r-1}=
\begin{pmatrix}
\beta_{r-1}(-c_0)
+\sum_{\ell=0}^{r-2} c_{r-2-\ell} \alpha_{\ell}
\\
\sum_{\ell=0}^{r-2} c_{r-2-\ell} \beta_{\ell}
\end{pmatrix}.
\end{equation}

Let us make some observations.
\begin{itemize}
\item
For any $r_1,r_2$, if $\lambda f_{r_1-1}+ \mu f_{r_2-1}=0$ for some $\lambda,\mu \in \R$, then $\lambda u_{r_1}+ \mu u_{r_2}=0$.
Indeed, denoting by $u=\lambda u_{r_1}+ \mu u_{r_2}$, we have $(\seq^0| \seq^1) u=0$ and thus $u=0$ since we assumed that $\seq^0,\seq^1$ are linearly independent.

\item
$f_{r_1-1}$ and $f_{r_2-1}$ are linearly dependent for any $r_1,r_2$.
Indeed, otherwise we obtain
$(\seq^0| \seq^1) U=(D\seq^0| D\seq^1)$ for some matrix $U \in \R^{2\times2}$.
This means that $D(\Eone) \subset \Eone$.
It then follows from \eqref{E2 cond r2} that $A^\tr(\Eone) \subset \Eone$ as well.
Therefore, $E_1$ satisfies \eqref{second E}, but this situation has been excluded from the beginning of the proof.

\item
$f_2$ and $f_3$ are linearly dependent only if $c_0 \neq 0$.
Indeed, if $c_0=0$, then $A^\tr\seq^1 \in \Eone$ by \eqref{E2 cond r2}, so that $A^\tr(\Eone) \subset \Eone$.
If $c_0=0$, we also have $\det(f_2|f_3)=c_1^2$, which cannot be zero since otherwise $c_0=c_1=0$, that is $\Eone \subset \ker v^\tr $, and thus $E_1$ satisfies \eqref{first E} (excluded).

\end{itemize}

The necessary condition $\det(f_2|f_{r-1})=0$ is equivalent to the identity
$$
\left(\sum_{\ell=0}^{r-2} c_{r-2-\ell} \beta_{\ell}\right) f_2 -c_0 f_{r-1}=0,$$
which, by the first observation above, in turn implies that
$$
\left(\sum_{\ell=0}^{r-2} c_{r-2-\ell} \beta_{\ell}\right) u_3 -c_0 u_r=0.$$
Since $c_0 \neq 0$, this gives the following formulas:
\begin{equation}\label{formulas f u}
\begin{dcases}
f_{r-1}=\left(\sum_{\ell=0}^{r-2} \newc_{r-2-\ell} \beta_{\ell}\right) f_2, \\
u_r= \left(\sum_{\ell=0}^{r-2} \newc_{r-2-\ell} \beta_{\ell}\right) u_3,
\end{dcases}
\end{equation}
where we introduced $\newc_s=\frac{c_s}{c_0}$.
Conversely, it is clear that if we have \eqref{formulas f u}, then \eqref{cond any r} will also hold for $r \geq 3$, provided that it holds for $r=3$.

Finally, observe that the second formula in \eqref{formulas f u}, combined with the definition \eqref{def u f} of $u_r$, uniquely determines all the $\alpha_r,\beta_r$ for $r \geq 4$ as a function of $\alpha_2,\beta_2$ and $\alpha_3,\beta_3$:
\begin{equation}\label{def seq ur}
\begin{dcases}
\alpha_r = \beta_{r-1}\alpha_2
+\left(\sum_{\ell=0}^{r-2} \newc_{r-2-\ell} \beta_{\ell}\right)
\left(\alpha_3 -\beta_2\alpha_2\right),
\\
\beta_r=\alpha_{r-1} +\beta_{r-1} \beta_2
+\left(\sum_{\ell=0}^{r-2} \newc_{r-2-\ell} \beta_{\ell}\right)
\left(\beta_3 -(\alpha_2 +\beta_2^2)\right),
\end{dcases}
\end{equation}
and that the first formula in \eqref{formulas f u} is equivalent to
\begin{equation}\label{cond beta}
\beta_{r-1} =\sum_{\ell=0}^{r-2} \newc_{r-2-\ell} \left(\alpha_\ell + (\beta_2-\newc_1)\beta_\ell\right).
\end{equation}

In summary, we have shown that the property considered is equivalent to: $c_0 \neq 0$ and there exist $\alpha_2, \beta_2$ and $\alpha_3, \beta_3$ such that \eqref{decomp psik} holds for $r=2,3$ and such that the sequence defined by \eqref{def seq ur} (with \eqref{def seq 01}) satisfies \eqref{cond beta} for every $r \in \ens{4,\ldots,N}$.

\item
Let us now study the sequence \eqref{def seq ur}.
The proof of the result will be complete after we show that the following three conditions are equivalent:
\begin{enumerate}
\item\label{cond beta i1}
Condition \eqref{cond beta} holds for every $r \geq 4$.

\item\label{cond beta i2}
Condition \eqref{cond beta} holds for $r=4,5$.

\item\label{cond beta i3}
$\alpha_3$ and $\beta_3$ are given by
\begin{equation}\label{cond a3b3}
\begin{dcases}
\beta_3 =
(\alpha_2+\beta_2^2)
-(\beta_2-\newc_1)^2
+\alpha_2 +(\beta_2-\newc_1)\beta_2,
\\
\alpha_3 =
\beta_2\alpha_2 - (\beta_2 -\newc_1) (\beta_3-(\alpha_2+\beta_2^2)).
\end{dcases}
\end{equation}

\end{enumerate}

We start with the implication \ref{cond beta i2} $\Longrightarrow$ \ref{cond beta i3}.
Condition \eqref{cond beta} for $r=4$ immediately gives $\beta_3$ as a function of $\alpha_2, \beta_2$:
$$
\beta_3 =\sum_{\ell=0}^2 \newc_{2-\ell} \left(\alpha_\ell + (\beta_2-\newc_1)\beta_\ell\right).
$$

On the other hand, condition \eqref{cond beta} for $r=5$ gives
\begin{align*}
\beta_4 &=\sum_{\ell=1}^2 \newc_{3-\ell} \left(\alpha_\ell + (\beta_2-\newc_1)\beta_\ell\right)
+\newc_0 \left(\alpha_3 + (\beta_2-\newc_1)\beta_3\right)
+\newc_3 \left(\alpha_0 + (\beta_2-\newc_1)\beta_0\right)
\\
 &=2\alpha_3 + \beta_2 \beta_3
+\sum_{\ell=1}^2 \newc_{3-\ell} \left(\alpha_\ell + (\beta_2-\newc_1)\beta_\ell\right),
\end{align*}
whereas, by definition \eqref{def seq ur},
$$
\beta_4=\alpha_3 +\beta_3 \beta_2
+\left(\sum_{\ell=0}^2 \newc_{2-\ell} \beta_{\ell}\right)
\left(\beta_3 -(\alpha_2 +\beta_2^2)\right).
$$

Identifying both expressions gives $\alpha_3$ as a function of $\alpha_2, \beta_2$:
$$
\alpha_3
=
-\sum_{\ell=1}^2 \newc_{3-\ell} \left(\alpha_\ell + (\beta_2-\newc_1)\beta_\ell\right)
+\left(\sum_{\ell=0}^2 \newc_{2-\ell} \beta_{\ell}\right)
\left(\beta_3 -(\alpha_2 +\beta_2^2)\right).
$$

We can check that the previous formulas are equivalent to \eqref{cond a3b3} (we prefer the expressions in \eqref{cond a3b3} because they make appear some coefficients involved in \eqref{def seq ur}).

Let us now prove the implication \ref{cond beta i3} $\Longrightarrow$ \ref{cond beta i1}.
We prove it by induction on $r$.
For $r=4$,  this holds by very definition of $\beta_3$ as we have seen above.
Assume now that \eqref{cond beta} holds for some arbitrary $r \geq 4$, and let us prove it for $r+1$, that is to prove that we have
\begin{equation}\label{formula betar induc}
\beta_r =\sum_{\ell=0}^{r-1} \newc_{r-1-\ell} \left(\alpha_\ell + (\beta_2-\newc_1)\beta_\ell\right).
\end{equation}

By definition \eqref{def seq ur} of $\beta_r$, we have
\begin{align*}
\beta_r =&
\alpha_{r-1}
+\beta_{r-1} \newc_1
+(\beta_2-\newc_1)\left(
\beta_{r-1}
-(\beta_2-\newc_1)\sum_{\ell=0}^{r-2} \newc_{r-2-\ell} \beta_{\ell}
\right)
\\&
+\left(
\beta_3 -(\alpha_2 +\beta_2^2)
+(\beta_2-\newc_1)^2
\right)
\sum_{\ell=0}^{r-2} \newc_{r-2-\ell} \beta_{\ell}.
\end{align*}

Using the induction hypothesis \eqref{cond beta} and the definition \eqref{cond a3b3} of $\beta_3$, we obtain
$$
\beta_r =
\newc_{r-1}
+ \sum_{\ell=0}^{r-2} \newc_{r-2-\ell} \left(
(\beta_2-\newc_1) \alpha_{\ell}
+(\alpha_2 +(\beta_2-\newc_1)\beta_2) \beta_{\ell}
\right).
$$

Now observe that, using the definition of $\alpha_3$, we have
\begin{align*}
\alpha_{\ell+1} + (\beta_2-\newc_1)\beta_{\ell+1}
&=\beta_\ell \alpha_2 +(\beta_2-\newc_1)(\alpha_\ell +\beta_\ell\beta_2)
\\
&=(\beta_2-\newc_1) \alpha_\ell +(\alpha_2 +(\beta_2-\newc_1)\beta_2) \beta_\ell.
\end{align*}
Formula \eqref{formula betar induc} easily follows this identity and the previous one.

\end{enumerate}

\end{proof}

\section{Kernel associated with the orthogonality conditions}\label{sect other explicit kern}

In this section, we supplement our results by giving a more explicit characterization of the conditions found in the previous section and that guaranteed the orthogonality conditions.
Then, we discuss the structure of the associated kernel.

\subsection{Kernel associated with the invariant subspaces}

Here we discuss properties related to the invariant subspaces of Section \ref{sect inv sub}.
We recall that, for any vector $h \in \R^{\newn}$, we denote the Kalman matrix of $(A,h)$ by
$$\kal{A,h}=(h|Ah|A^2h|\cdots|A^{\newn-1}h) \in \R^{\newn \times \newn}.$$

\begin{proposition}
\begin{enumerate}
\item
There exists $E$ satisfying \eqref{first E}, $\seq^0 \in E$ and $E \subset \ker b^\tr$, if, and only if,
\begin{equation}\label{cond hyper 1}
\kal{A,v}^\tr  \seq^0=\kal{A,b}^\tr  \seq^0=0.
\end{equation}

\item
Assume that $\kal{A,v}^\tr  \seq^0=0$.
Then, for any $\delta \neq 1$, the solution to the kernel equations \eqref{kern equ vec} is
$$
k_i(x,\xi)=0, \quad \newk(x,\xi)=e^{-A^\tr(x-\xi)} \seq^0, \quad \forall (x,\xi) \in \R^2.
$$
If, moreover, $\kal{A,b}^\tr  \seq^0=0$, then the orthogonality conditions \eqref{orth cond newk} are satisfied.
\end{enumerate}

\end{proposition}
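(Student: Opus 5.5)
For the first point, I would work with the smallest candidate, namely the cyclic subspace $E_*=\Span\ens{(A^\tr)^s\seq^0 \st s \in \N}$. By Cayley--Hamilton this equals $\Span\ens{(A^\tr)^s\seq^0 \st 0 \leq s \leq \newn-1}$. Any $E$ with $A^\tr(E)\subset E$ and $\seq^0\in E$ contains $E_*$. So if some $E$ satisfies \eqref{first E}, $\seq^0\in E$ and $E\subset\ker b^\tr$, then $E_*\subset\ker v^\tr\cap\ker b^\tr$. Conversely, $E_*$ itself is $A^\tr$-invariant and contains $\seq^0$. Hence existence of such an $E$ is equivalent to $E_*\subset\ker v^\tr\cap\ker b^\tr$, that is, $\ps{v}{(A^\tr)^s\seq^0}{}=\ps{b}{(A^\tr)^s\seq^0}{}=0$ for $0\le s\le\newn-1$. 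Since $\ps{h}{(A^\tr)^s\seq^0}{}=\ps{A^sh}{\seq^0}{}$, these are exactly the entries of $\kal{A,v}^\tr\seq^0$ and $\kal{A,b}^\tr\seq^0$, which gives \eqref{cond hyper 1}.

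For the second point, I would check directly that the proposed pair solves \eqref{kern equ vec} and then invoke the uniqueness part of Theorem \ref{thm existence and analyticity}. Set $\newk(x,\xi)=e^{-A^\tr(x-\xi)}\seq^0$ and $k_i=0$. Then $D_\mu\px{\newk}+D_\nu\pxi{\newk}=-(D_\mu-D_\nu)A^\tr\newk=-A^\tr\newk$, using $D_\mu-D_\nu=\Id_{\newn}$. Since $k_i=0$, the first equation holds. The diagonal condition $\newk(x,x)=\seq^0$ and $k_i(x,\delta x)=0$ are immediate. The second equation reduces to $\ps{w}{e^{-A^\tr(x-\xi)}\seq^0}{}=0$ for all $(x,\xi)$. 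Expanding the exponential, this amounts to $\ps{v}{(A^\tr)^s\seq^0}{}=0$ for all $s$, because $w=-2v$. That in turn follows from $\kal{A,v}^\tr\seq^0=0$ together with Cayley--Hamilton.

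The regularity is clear, since the functions are entire. Uniqueness among $C^\infty(\R^2)$ solutions with \eqref{general BC} (recall that \eqref{kern equ vec} is the normalized form of \eqref{kern equ compo}) then identifies this pair as \emph{the} solution for every $\delta\neq1$. Finally, $\frac{\partial^r\newk}{\partial x^r}(0,0)=(-A^\tr)^r\seq^0$, so \eqref{orth cond newk} reads $\ps{A^rb}{\seq^0}{}=0$ for all $r$. By Cayley--Hamilton again, this is equivalent to $\kal{A,b}^\tr\seq^0=0$.

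I do not expect a real obstacle. The only point needing care is the reduction of the infinitely many conditions on all powers $s$ to the finitely many Kalman columns $s\le\newn-1$, which is handled by Cayley--Hamilton in both parts.
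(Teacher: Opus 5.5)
Your proof is correct and follows the paper's argument. The ``only if'' direction of the first item and the whole second item match the paper: direct verification via $D_\mu-D_\nu=\Id_{\newn}$, Cayley--Hamilton to get $\ps{v}{(A^\tr)^s\seq^0}{}=0$ for all $s$, and uniqueness from Theorem \ref{thm existence and analyticity}, which you state explicitly where the paper leaves it implicit. The only difference is cosmetic: for the ``if'' direction of the first item you take the smallest admissible subspace, the cyclic space $\Span\ens{(A^\tr)^s\seq^0 \st s \in \N}$, whereas the paper takes the largest one, $E=\ker \kal{A,v}^\tr \cap \ker \kal{A,b}^\tr$, and obtains its $A^\tr$-invariance from Cayley--Hamilton.
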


\begin{proof}
\begin{enumerate}
\item
Assume that \eqref{cond hyper 1} holds.
Let us define
$$E=\ker \kal{A,v}^\tr \cap \ker \kal{A,b}^\tr.$$
By assumption, $\seq^0 \in E$ and it is clear that $E \subset \ker v^\tr$ and $E \subset \ker b^\tr$.
Finally, $E$ is stable by $A^\tr$ thanks to Cayley-Hamilton theorem.
Conversely, assume that \eqref{first E} holds for some $E \subset \ker b^\tr$ with $\seq^0 \in E$.
Since $\seq^0 \in E$ and $E$ is stable by $A^\tr$, we have $(A^\tr)^k \seq^0 \in E$ for every $k \in \N$.
Since $E \subset \ker v^\tr$ and $E \subset \ker b^\tr$, we obtain \eqref{cond hyper 1}.

\item
We see from the kernel equations \eqref{kern equ vec} that $k_i=0$ if, and only if, we have
$$
\begin{dcases}
D_\mu\px{\newk}(x,\xi)
+D_\nu \pxi{\newk}(x,\xi)
+A^\tr \newk(x,\xi)
=0,
\\
\ps{v}{\newk(x,\xi)}{}=0,
\\
\newk(x,x)=\seq^0.
\end{dcases}
$$
Using that $D_\mu-D_\nu=\Id_{\newn}$, it is clear that $\newk(x,\xi)=e^{-A^\tr(x-\xi)} \seq^0$ satisfies the first equation.
The second condition follows from the assumption $\kal{A,v}^\tr \seq^0=0$ and Cayley-Hamilton theorem.
The third condition is trivial.
Finally, the orthogonality conditions are clearly satisfied under the additional assumption $\kal{A,b}^\tr  \seq^0=0$.

\end{enumerate}

\end{proof}

Let us now address the second type of invariant subspaces introduced in Section \ref{sect inv sub}.

\begin{proposition}\label{prop caract inv sub}
Assume that $\seq^0, b \neq 0$.
\begin{enumerate}
\item
There exists $E$ satisfying \eqref{second E}, $\seq^0 \in E$ and $E \subset \ker b^\tr$ if, and only if, there exists a nonempty $J \subsetneq \ens{1,\ldots,\newn}$ such that
$$
\seq^0_j=a_{rj}=b_r=0, \quad \forall j \not\in J, \, \forall r \in J.
$$

\item
Assume that there exists a nonempty $J \subsetneq \ens{1,\ldots,\newn}$ such that $\seq^0_j=a_{rj}=0$ for every $j \not\in J$ and $r \in J$.
Then, for any $\delta \neq 1$, the solution to the kernel equations \eqref{kern equ vec} satisfies
\begin{equation}\label{cond newkj zero}
\newk_{j}=0, \quad \forall j \not\in J.
\end{equation}
If, moreover, $b_r=0$ for every $r \in J$, then the orthogonality conditions \eqref{orth cond newk} are satisfied.
\end{enumerate}

\end{proposition}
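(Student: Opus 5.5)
For part 1, the key fact is that $D$ is diagonal with pairwise distinct diagonal entries. Indeed $d_j=\frac{\lambda_i+\lambda_j}{\lambda_i-\lambda_j}$, and the map $\lambda\mapsto\frac{\lambda_i+\lambda}{\lambda_i-\lambda}$ is injective, while the speeds are distinct by \eqref{hyp speeds}. Hence every $D$-invariant subspace is a coordinate subspace
$$E_J=\Span\ens{e_r \st r\in J}$$
for some $J\subset\ens{1,\ldots,\newn}$. I would prove this via Lagrange interpolation: for $x\in E$, each projection $e_re_r^\tr x$ is a polynomial in $D$ applied to $x$, so it lies in $E$.

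With this in hand, the conditions become coordinate conditions on $E_J$:
\begin{itemize}
\item $\seq^0\in E_J$ means $\seq^0_j=0$ for $j\notin J$.
\item $E_J\subset\ker b^\tr$ means $b_r=0$ for $r\in J$.
\item $A^\tr(E_J)\subset E_J$ means $(A^\tr)_{jr}=a_{rj}=0$ for $r\in J$, $j\notin J$.
\end{itemize}
Nonemptiness of $J$ follows from $\seq^0\neq0$, and $J\neq\ens{1,\ldots,\newn}$ follows from $b\neq0$. The converse direction is immediate: take $E=E_J$, which is trivially $D$-invariant.

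For part 2, I would use uniqueness in Theorem \ref{thm existence and analyticity}. The plan is to construct a solution of \eqref{kern equ vec} with $\newk_j=0$ for $j\notin J$ and then conclude that it is the solution. Write $\newk_J=(\newk_r)_{r\in J}$. Since $a_{rj}=0$ for $r\in J$, $j\notin J$, the $j$-th row of $A^\tr\newk$ for $j\notin J$ only involves $\newk_r$ with $r\notin J$. Combined with $\seq^0_j=0$, the rows $j\notin J$ of \eqref{kern equ vec} are satisfied by $\newk_j\equiv0$ with boundary value $\newk_j(x,x)=\seq^0_j=0$.

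Restricting the remaining equations to the indices in $J$ together with $k_i$ gives a kernel system of the same structure. It corresponds to a reduced problem with matrix $(a_{rj})_{r,j\in J}$, vector $\seq^0_J$, $w_J$, and the same $\mu_j,\nu_j$. Equivalently, it is system \eqref{kern equ compo} for a reduced set of speeds, namely $\lambda_i$ and $\lambda_j$ for $j\in J$, with zero-diagonal coupling, so $M\in\Mset$ still holds. Theorem \ref{thm existence and analyticity}, applied to this reduced $n'=|J|+1$ system, gives a smooth solution $(\newk_J,k_i)$. Padding it with zeros produces a smooth solution of the full system satisfying \eqref{general BC}. By uniqueness, this padded function is the solution, which yields \eqref{cond newkj zero}.

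The main obstacle is checking that the reduced system genuinely fits the hypotheses of Theorem \ref{thm existence and analyticity}, i.e. that it comes from a valid $(\Lambda',M')$. If the statement requires $m,p\ge1$ for the reduced speeds, I would instead redo the Appendix construction directly, or argue through the Cauchy reformulation \eqref{equ U}. In that reformulation, the rows $j\notin J$ form a homogeneous linear system with zero data, and uniqueness gives zero directly.

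Finally, if $b_r=0$ for $r\in J$, then
$$\ps{b}{\newk}{}=\sum_{r\in J}b_r\newk_r+\sum_{j\notin J}b_j\newk_j=0$$
identically, so all its derivatives at $(0,0)$ vanish and \eqref{orth cond newk} holds.
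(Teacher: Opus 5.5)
Your proposal is correct and follows the paper's proof. Part 1 uses the fact that $D$ is diagonal with distinct entries, so its invariant subspaces are coordinate subspaces, and part 2 deduces \eqref{cond newkj zero} from uniqueness of the kernel solution. You give more detail than the paper's one-line appeal to uniqueness. Your fallback is the cleanest way to make that appeal precise: the rows $j\notin J$ form a closed homogeneous subsystem with zero diagonal data, which avoids asking whether Theorem \ref{thm existence and analyticity} applies to a reduced system whose speeds may all have the same sign.
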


\begin{proof}
\begin{enumerate}
\item
Since $D$ is a diagonal matrix with distinct entries, its invariant subspaces are of the form
$$E=\Span\ens{e_r \st r \in J},$$
for some $J \subset \ens{1,\ldots,\newn}$, where $e_1,\ldots,e_{\newn}$ are the canonical vectors of $\R^{\newn}$.
Since $\seq^0 \neq 0$ (resp. $b \neq 0$), we have $J \neq \emptyset$ (resp. $J \neq \ens{1,\ldots,\newn}$).
Then, we easily check that such a subspace is invariant by $A^\tr$ if, and only if, $a_{rj}=0$ for every $r \in J$ and $j \not\in J$ and that it is included in $\ker b^\tr$ if, and only if, $b_r=0$ for every $j \in J$.

\item
Property \eqref{cond newkj zero} is a consequence of the uniqueness of the solution to the kernel equations.
The orthogonality conditions are clearly satisfied under the additional assumption that $b_r=0$ for every $r \in J$.

\end{enumerate}

\end{proof}

\begin{remark}
The first item in the above propositions gives explicit conditions that guarantee that the orthogonality conditions \eqref{orth cond newk} hold (when combined with the results of the previous sections).
We found these conditions with an algebraic approach.
On the other hand, once these conditions are known, the second item of the above propositions show how to use them to obtain an analytic proof of the orthogonality conditions.
Observe in addition that these different proofs are valid for arbitrary $\delta \neq 1$.
\end{remark}

\subsection{Kernel associated with nontrivial rank two sequences}

In the same spirit as in the previous section, we now we discuss the following property, related to Proposition \ref{thm caract E2}:
\begin{equation}\label{cond Eone r to five}
E_1 \subset \ker b^\tr, \quad \seq^r \in \Eone, \quad \forall r \in \ens{2,\ldots, 5}.
\end{equation}

Below, we denote by $c_0=\ps{v}{\seq^0}{}$.

\begin{proposition}\label{prop caract rank two seq}
\begin{enumerate}
\item
Assume that $\rank(\seq^0|\seq^1)=2$, $E_1$ satisfies neither \eqref{first E} nor \eqref{second E}, $c_0 \neq 0$ and $\ps{b}{\seq^0}{}=0$.
Then, condition \eqref{cond Eone r to five} holds if, and only if, there exists $j_0 \in \ens{1,\ldots,\newn}$ such that
$$
b_{j_0}=0, \quad \rank \Delta_{j_0}=1,
$$
where $\Delta_{j_0} \in \R^{2\newn \times 2}$ is given by
$$
\Delta_{j_0}
=\begin{pmatrix}
(D-d_{j_0}) \seq^0 & A^\tr e_{j_0} \\
A^\tr \seq^0 & v_{j_0} \seq^0 -c_0 e_{j_0}
\end{pmatrix},
$$
where $d_{j_0}$ is the $j_0$-th diagonal entry of $D$ and $e_{j_0}$ is the $j_0$-th canonical vector of $\R^{\newn}$.

\item\label{prop caract rank two seq i2}
Assume that $n=4$, $i=2$,
$$
b=\begin{pmatrix}
1 \\
0 \\
-\rho
\end{pmatrix},
\quad \rho \neq 0,
$$
$c_0 \neq 0$, $\alpha_{24} \neq 0$, $\ps{b}{\seq^0}{}=0$ and $\rank \Delta_2=1$.
Then, the solution to the kernel equations \eqref{kern equ vec} with $\delta=-1$ is given, for some $\sigma \in \R\setminus\ens{0}$, by
\begin{equation}\label{equ k21 k22 k23}
\begin{dcases}
k_{21}=k_{24} \rho,
\\
k_{22} =\frac{1}{-\sigma\alpha_{24}}\left(\sigma \mu_3 \px{k_{24}} +\sigma \nu_3 \pxi{k_{24}} -\alpha_{32}k_{24}\right), \\
k_{23} =\frac{1}{-\sigma\alpha_{24}}\left(\px{k_{24}}+\pxi{k_{24}} -\sigma\alpha_{23}k_{24}\right),
\end{dcases}
\end{equation}
where $k_{24} \in C^{\infty}(\R^2)$ is the solution to
\begin{equation}\label{wave equ k24}
\begin{dcases}
\mu_3 \pxx{k_{24}}(x,\xi)
+(\mu_3+\nu_3) \frac{\partial^2 k_{24}}{\partial x \partial \xi}(x,\xi)
+\nu_3 \pxixi{k_{24}}(x,\xi)
=
-2 c_0 k_{24}(x,\xi),
\\
\mu_3 \px{k_{24}}(x,-x)
+\nu_3 \pxi{k_{24}}(x,-x)
=\frac{\alpha_{32}}{\sigma} k_{24}(x,-x),
\\
k_{24}(x,x)=\alpha_{24}.
\end{dcases}
\end{equation}

\end{enumerate}

\end{proposition}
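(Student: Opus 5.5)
For item 1, the starting point is the proof of Proposition \ref{thm caract E2}. Under the stated assumptions ($\rank(\seq^0|\seq^1)=2$, $E_1$ satisfies neither \eqref{first E} nor \eqref{second E}, and $c_0\neq0$), that proof shows the following. Condition $\seq^r\in\Eone$ for $r\in\ens{2,\ldots,5}$ is equivalent to \eqref{decomp psik} holding for $r=2,3$, together with $\alpha_3,\beta_3$ being given by \eqref{cond a3b3}. It is also equivalent to every $f_{r-1}$ being collinear to $f_2$, with $u_r$ proportional to $u_3$.

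The key structural consequence is obtained from \eqref{cond any r} for $r=3$, namely $(\seq^0|\seq^1)u_3=(D\seq^0|D\seq^1)f_2$ with $f_2\neq0$. Writing $f_2=(a,b')^\tr$, this says that the particular combination $aD\seq^0+b'D\seq^1$ lies in $\Eone$. Combined with \eqref{E2 cond r2}, which expresses $A^\tr\seq^1$ modulo $\Eone$ in terms of $D\seq^0$, this should force a rank-one relation.

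To make the index $j_0$ appear, I would use $\ps{b}{\seq^0}{}=\ps{b}{\seq^1}{}=0$, so that $\Eone=\ker b^\tr$ when $\newn$ is such that $\dim\ker b^\tr=2$. In general I would instead work with the relation $\Eone\subset\ker b^\tr$. I would then rewrite the membership conditions coordinatewise. Since $D$ is diagonal, $(D-d_{j_0})\seq^0$ has zero $j_0$-th entry. Eliminating $\seq^1=A^\tr\seq^0$ and comparing the two relations (the one from $r=2$ and the one from $r=3$) should show that the columns $\binom{(D-d_{j_0})\seq^0}{A^\tr\seq^0}$ and $\binom{A^\tr e_{j_0}}{v_{j_0}\seq^0-c_0e_{j_0}}$ are proportional, with the proportionality constant encoding $\newc_1$ and $\beta_2$.

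The converse direction runs backwards. Given $\rank\Delta_{j_0}=1$ and $b_{j_0}=0$, I would produce $\alpha_2,\beta_2,\alpha_3,\beta_3$ explicitly from the proportionality constant, check \eqref{decomp psik} for $r=2,3$ and the relations \eqref{cond a3b3}, and then invoke the equivalence established in the proof of Proposition \ref{thm caract E2}. The main obstacle is identifying the right $j_0$ in the forward direction. I would try choosing $e_{j_0}$ as a vector of $\ker b^\tr$ complementary to $\seq^0$ inside $\Eone$ after a diagonal normalization, and expect that the constraint of \eqref{second E} failing forces this vector to be a canonical one.

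For item 2, I would plug the rank-one relation of $\Delta_2$ (with proportionality constant $\sigma\neq0$) into the kernel system \eqref{kern equ vec} with $\delta=-1$. The approach is to define $k_{21},k_{22},k_{23}$ by \eqref{equ k21 k22 k23} from the solution $k_{24}$ of the Goursat-type problem \eqref{wave equ k24}, and verify that the resulting vector solves \eqref{kern equ vec}. The second-order equation in \eqref{wave equ k24} is obtained by eliminating $k_{23}$ between the equations for $k_{22}$ and $k_{23}$. The boundary condition on $\xi=-x$ encodes $k_i(x,-x)=0$, and the diagonal values follow from $\newk(x,x)=\seq^0$ together with the rank-one relations. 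Uniqueness in Theorem \ref{thm existence and analyticity} then identifies this with the solution. Existence of $k_{24}$ for \eqref{wave equ k24} would itself be obtained by reversing this process, i.e.\ by defining $k_{24}$ from the known kernel and checking that it satisfies \eqref{wave equ k24}.
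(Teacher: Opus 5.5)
There is a genuine gap in item 1, at exactly the step you flag as the main obstacle. From \eqref{cond any r} at $r=3$ you only keep that $D(\seq^1-\rho\seq^0)\in\Eone$. Here your $f_2$ equals $c_0(-\rho,1)^\tr$ with $\rho=\beta_2-\newc_1$. This membership alone cannot single out a canonical vector.

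The missing idea is to use \eqref{cond a3b3}, which carries the whole content of the conditions for $r=4,5$.
\begin{itemize}
\item It gives $u_3=\kappa(-\rho,1)^\tr$ with $\kappa=\beta_3-(\alpha_2+\beta_2^2)$.
\item So \eqref{cond any r} at $r=3$ becomes $(D-\theta)(\seq^1-\rho\seq^0)=0$ with $\theta=\kappa/c_0$.
\item Thus $\seq^1-\rho\seq^0\neq0$ is an eigenvector of the diagonal matrix $D$, whose entries are pairwise distinct. Hence $\seq^1=\rho\seq^0+re_{j_0}$ with $r\neq0$ and $\theta=d_{j_0}$. This is where $j_0$ comes from.
\item Since $\alpha_2=c_0\theta+\rho^2-\rho\beta_2$, relation \eqref{E2 cond r2} then reads $rA^\tr e_{j_0}=-c_0(D-d_{j_0})\seq^0+r\newc_1e_{j_0}$.
\item Comparing $j_0$-th entries, using that the diagonal of $A$ is zero, forces $\newc_1=0$, i.e.\ $\rho=-\newr v_{j_0}$ with $\newr=r/c_0$.
\end{itemize}
These two relations are exactly the proportionality of the two columns of $\Delta_{j_0}$. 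In particular, the proportionality constant does not encode $\newc_1$, as you expected; $\newc_1$ is forced to vanish.

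The mechanism you suggest instead would not work. The failure of \eqref{second E} only says that $\Eone$ is not $D$-invariant; it cannot make any vector canonical. Also, $b$ plays no role in selecting $j_0$: a vector of $\ker b^\tr$ complementary to $\seq^0$ is generically not canonical. The equivalence between ``$\seq^r\in\Eone$ for $2\leq r\leq5$'' and ``$\rank\Delta_{j_0}=1$ for some $j_0$'' does not involve $b$ at all. The vector $b$ enters only at the end, via $\ps{b}{\seq^1}{}=rb_{j_0}$ (using $\ps{b}{\seq^0}{}=0$).

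With the eigenvector formulation, the converse is routine. $\rank\Delta_{j_0}=1$ yields the two relations with some $\newr\neq0$; a zero multiplier would give $\seq^0\parallel e_{j_0}$, hence $\seq^1=A^\tr\seq^0=0$. One then takes $\beta_2=\rho=-\newr v_{j_0}$, $\alpha_2=c_0d_{j_0}$, and $\alpha_3,\beta_3$ from \eqref{cond a3b3}.

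Your item 2 is essentially the paper's first proof: verify \eqref{kern equ vec} with $\delta=-1$ for the formulas, then conclude by uniqueness. Your existence argument for $k_{24}$ (take the fourth component of the true kernel) is non-circular only if you do two things directly:
\begin{itemize}
\item show that this kernel satisfies $k_{21}=\rho k_{24}$, via item 1, Proposition \ref{thm caract E2} and Remark \ref{rem equiv char};
\item then solve the first and fourth equations for $k_{22},k_{23}$, using that $\alpha_{21}\alpha_{34}-\alpha_{31}\alpha_{24}\neq0$.
\end{itemize}
That is the content of the paper's second proof; the paper itself obtains existence from a first-order reformulation.
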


To see that \eqref{wave equ k24} indeed has a solution, we can introduce $h=\mu_3 \px{k_{24}}+\nu_3 \pxi{k_{24}}$ and observe that $(k_{24},h)$ satisfies a first-order hyperbolic system similar to the kernel equations \eqref{kern equ compo} and whose well-posedness can be established as in the proof of Theorem \ref{thm existence and analyticity}.

\begin{proof}[Proof of Proposition \ref{prop caract rank two seq}]
\begin{enumerate}
\item
Under the assumptions of the proposition and from the proof of Proposition \ref{thm caract E2}, we have $\seq^r \in \Eone$ for every $r \in \ens{2,\ldots, 5}$ if, and only if, there exist $\alpha_2, \beta_2 \in \R$ such that
$$
\begin{dcases}
\seq^2=\alpha_2 \seq^0 + \beta_2 \seq^1, \\
\seq^3=\alpha_3 \seq^0 + \beta_3 \seq^1,
\end{dcases}
$$
where $\alpha_3,\beta_3$ are given by \eqref{cond a3b3}.
We can check that this is equivalent to the existence of $\rho,\theta \in \R$ such that
\begin{equation}\label{cond with newc1}
\begin{dcases}
(A^\tr-\rho) \seq^1=-c_0 (D-\theta) \seq^0
+\newc_1(\seq^1-\rho\seq^0), \\
(D-\theta)(\seq^1-\rho \seq^0)=0,
\end{dcases}
\end{equation}
where we recall that $\newc_1=\frac{\ps{v}{\seq^1}{}}{c_0}$.
Since $D$ is diagonal with distinct entries and $\seq^0, \seq^1$ are linearly independent, the second condition in \eqref{cond with newc1} is equivalent to the existence of some $j_0 \in \ens{1,\ldots,\newn}$ and $r \in \R \setminus\ens{0}$ such that
\begin{equation}\label{cond from D}
\theta=d_{j_0}, \quad \seq^1=\rho \seq^0 +r e_{j_0}.
\end{equation}
Plugging the second identity in the first condition in \eqref{cond with newc1} and recalling that $\seq^1= A^\tr \seq^0$, we see that this condition simply becomes
$$
r A^\tr e_{j_0}=-c_0 (D-d_{j_0}) \seq^0
+r\newc_1 e_{j_0}.
$$

Comparing the $j_0$-th components of both quantities, using that the diagonal of $A$ is zero and $r \neq 0$, we see that $\newc_1=0$.
Recalling \eqref{cond from D}, the condition $\newc_1=0$ is equivalent to $\rho=-\newr v_{j_0}$ with $\newr=\frac{r}{c_0}$.
In summary, there exist $\rho,\theta \in \R$ such that \eqref{cond with newc1} holds if, and only if, there exist $j_0 \in \ens{1,\ldots,\newn}$ such that
\begin{equation}\label{equiv cond Delta}
\exists \newr \in \R \setminus\ens{0}, \quad \begin{dcases}
(D-d_{j_0}) \seq^0 + \newr A^\tr e_{j_0}=0, \\
\seq^1 + \newr (v_{j_0} \seq^0 -c_0 e_{j_0})=0.
\end{dcases}
\end{equation}
We can check that this condition is equivalent to $\rank \Delta_{j_0}=1$.
Finally, it is clear that $E_1 \subset \ker b^\tr$, i.e. $\ps{b}{\seq^1}{}=0$, if, and only if, $b_{j_0}=0$.

\item
Since $i=2$, we have (for the notations, see Section \ref{sect notaa})
$$
A^\tr=
\begin{pmatrix}
0 & \alpha_{31} & \alpha_{41} \\
\alpha_{13} & 0 & \alpha_{43} \\
\alpha_{14} & \alpha_{34} & 0
\end{pmatrix}
, \quad
D=2\diag(\mu_1,\mu_3,\mu_4) -\Id_3
, \quad
\seq^0=
\begin{pmatrix}
\alpha_{21} \\
\alpha_{23} \\
\alpha_{24}
\end{pmatrix},
\quad
v=
-\frac{1}{2}
\begin{pmatrix}
\alpha_{12} \\
\alpha_{32} \\
\alpha_{42}
\end{pmatrix}.
$$
Clearly, $\ps{b}{\seq^0}{}=0$ is equivalent to
\begin{equation}\label{cond a21}
\alpha_{21} = \alpha_{24}\rho.
\end{equation}
On the other hand, using the characterization \eqref{equiv cond Delta}, we see that $\rank \Delta_2=1$ if, and only if,
\begin{align}
\alpha_{31} &= \sigma(\mu_1-\mu_3)\rho \alpha_{24},
\label{cond a31} \\
\alpha_{34} &=  \sigma(\mu_4-\mu_3)\alpha_{24},
\label{cond a34} \\
\alpha_{41} &= -\frac{\rho}{\sigma}\left(\alpha_{32} +\sigma^2 (\mu_1-\mu_3)\alpha_{23}\right),
\label{cond a41} \\
\alpha_{42}+\alpha_{12}\rho &=\sigma (\alpha_{43}+\alpha_{13}\rho),
\label{cond a43 a13} \\
\alpha_{14} &=-\frac{1}{\rho \sigma}\left(\alpha_{32} + \sigma^2(\mu_4-\mu_3)\alpha_{23}\right),
\label{cond a14}
\end{align}
where $\sigma=-\frac{2}{\newr}$.
Using these conditions we easily check that $((\newk)^\tr,k_2) =(k_{21},k_{23},k_{24},k_{22})$ defined by \eqref{equ k21 k22 k23}-\eqref{wave equ k24} satisfies the kernel equations \eqref{kern equ vec} with $\delta=-1$.

\end{enumerate}

\end{proof}

To conclude this section we will present a method which shows how conditions \eqref{cond a21}-\eqref{cond a14} can also be found from an analytic point of view.

\begin{proof}[Another proof of Proposition \ref{prop caract rank two seq}, item \ref{prop caract rank two seq i2}]
\begin{enumerate}
\item
For every $j$, let us denote by $\opP_j$ the first-order linear partial differential operator
$$\opP_j=\mu_j\px{}+\pxi{}\nu_j.$$

Since we want the orthogonality condition $\ps{b}{\newk(\cdot,0)}{}=0$, we look for a solution satisfying
$$k_{21}=k_{24}\rho,$$
(recall also Remark \ref{rem equiv char}).
In particular, we assume \eqref{cond a21}.
Then, the problem is to find a solution to
\begin{equation}\label{kern equ Pnot}
\begin{dcases}
\begin{aligned}
k_{22}\alpha_{21}+k_{23}\alpha_{31} &=(-\rho\opP_1-\alpha_{41}) k_{24},
\\
\opP_2 k_{22} +k_{23}\alpha_{32}+k_{24}(\alpha_{42}+\alpha_{12}\rho) &=0,
\\
\opP_3 k_{23} +k_{22}\alpha_{23}+k_{24}(\alpha_{43}+\alpha_{13}\rho) &=0,
\\
k_{22}\alpha_{24}+k_{23}\alpha_{34} &= (-\opP_4-\alpha_{14}\rho) k_{24}.
\end{aligned}
\end{dcases}
\end{equation}

Let us denote by
$$
\omega=\det \begin{pmatrix}
\alpha_{21} & \alpha_{31} \\
\alpha_{24} & \alpha_{34}
\end{pmatrix}.
$$
Assume that $\omega \neq 0$ (this will follow a posteriori from \eqref{cond a31}, \eqref{cond a34}, using also that $\sigma,\rho,\alpha_{24} \neq 0$).
Then, the first and fourth equations in \eqref{kern equ Pnot} give
\begin{equation}\label{def k22 k23}
\begin{dcases}
k_{22} =\frac{1}{\omega}\left(-\alpha_{34}\rho\opP_1 +\alpha_{31}\opP_4
-\alpha_{34}\alpha_{41} +\alpha_{31} \alpha_{14}\rho\right) k_{24}, \\
k_{23} =\frac{1}{\omega}\left(\alpha_{24}\rho \opP_1 -\alpha_{21}\opP_4
+\alpha_{24}\alpha_{41} -\alpha_{21} \alpha_{14} \rho\right) k_{24}.
\end{dcases}
\end{equation}

Plugging these relations in the second and third equations in \eqref{kern equ Pnot} give the following two second-order partial differential equations for $k_{24}$:
$$
\opQ k_{24}=0, \quad \tilde{\opQ}  k_{24}=0,
$$
where $\opQ=\opQ^{(2)} +\opQ^{(1)} +\opQ^{(0)}$, with
\begin{align*}
\opQ^{(2)} &=\opP_2\left(-\alpha_{34}\rho\opP_1 +\alpha_{31}\opP_4\right),
\\
\opQ^{(1)} &=(-\alpha_{34}\alpha_{41} +\alpha_{31} \alpha_{14} \rho)\opP_2
+\left(\alpha_{24}\rho \opP_1 -\alpha_{21} \opP_4\right) \alpha_{32},
\\
\opQ^{(0)} &=(\alpha_{24}\alpha_{41} -\alpha_{21} \alpha_{14} \rho) \alpha_{32}+\omega (\alpha_{42}+\alpha_{12}\rho),
\end{align*}
and $\tilde{\opQ}=\tilde{\opQ}^{(2)} +\tilde{\opQ}^{(1)} +\tilde{\opQ}^{(0)}$, with
\begin{align*}
\tilde{\opQ}^{(2)} &=\opP_3 \left(\alpha_{24}\rho\opP_1 -\alpha_{21}\opP_4\right),
\\
\tilde{\opQ}^{(1)} &=(\alpha_{24}\alpha_{41} -\alpha_{21} \alpha_{14} \rho) \opP_3
+\left(-\alpha_{34}\rho\opP_1 +\alpha_{31}\opP_4\right)\alpha_{23},
\\
\tilde{\opQ}^{(0)} &=(-\alpha_{34}\alpha_{41} +\alpha_{31} \alpha_{14} \rho) \alpha_{23} +\omega(\alpha_{43}+\alpha_{13}\rho).
\end{align*}

\item
We are going to find conditions to guarantee that these two equations are compatible.
To this end, it is for instance sufficient to have
\begin{equation}\label{cond diff op r}
\opQ^{(r)}= \sigma \tilde{\opQ}^{(r)}, \quad r=0,1,2,
\end{equation}
for some $\sigma \in \R$.
We first look at the operators of highest order.
Using the identities
\begin{equation}\label{useful ids}
\alpha_{21}=\alpha_{24}\rho, \quad \mu_j-\nu_j=1 \quad (j \neq 2), \quad \mu_2=\nu_2=1,
\end{equation}
we have
\begin{align}
\alpha_{24}\rho\opP_1 -\alpha_{21}\opP_4
&=\alpha_{24}\rho \left(\opP_1-\opP_4\right)
\nonumber
\\
&=\alpha_{24}\rho (\mu_1-\mu_4) \opP_2.
\label{simp Q2}
\end{align}
It follows that
\begin{equation}\label{simp expr Q2}
\tilde{\opQ}^{(2)}=\alpha_{24}\rho (\mu_1-\mu_4) \opP_3 \opP_2.
\end{equation}

Consequently, we see that \eqref{cond diff op r} holds for $r=2$ if we have
\begin{equation}\label{equ operators 2}
-\alpha_{34}\rho\opP_1 +\alpha_{31}\opP_4
=\sigma \alpha_{24}\rho (\mu_1-\mu_4) \opP_3.
\end{equation}
This identity holds if $(\alpha_{34},\alpha_{31})$ satisfies
$$
\begin{pmatrix}
-\rho \mu_1 & \mu_4 \\
-\rho \nu_1 & \nu_4
\end{pmatrix}
\begin{pmatrix}
\alpha_{34} \\
\alpha_{31}
\end{pmatrix}
=\sigma \alpha_{24}\rho(\mu_1-\mu_4)
\begin{pmatrix}
\mu_3 \\
\nu_3
\end{pmatrix},
$$
which is equivalent to \eqref{cond a34}-\eqref{cond a31} (using $\rho \neq 0$ and \eqref{useful ids}).

\item
Let us now compute the first-order differential operators.
We have
\begin{align*}
\opQ^{(1)} &=\left(
-\alpha_{34}\alpha_{41} +\alpha_{31} \alpha_{14} \rho
+\alpha_{24}\rho (\mu_1-\mu_4) \alpha_{32}
\right)\opP_2 \quad \text{ (by \eqref{simp Q2}), }
\\
\tilde{\opQ}^{(1)} &=\left(
\alpha_{24}\alpha_{41} -\alpha_{21} \alpha_{14} \rho
+\sigma \alpha_{24}\rho(\mu_1-\mu_4) \alpha_{23}
\right)\opP_3 \quad \text{ (by \eqref{equ operators 2}). }
\end{align*}
As a result, we have \eqref{cond diff op r} for $r=1$, if $\opQ^{(1)}=\tilde{\opQ}^{(1)}=0$,
that is, if
\begin{equation}\label{rela order zero}
\begin{dcases}
-\alpha_{34}\alpha_{41} +\alpha_{31} \alpha_{14} \rho
=-\alpha_{24}\rho (\mu_1-\mu_4) \alpha_{32},
\\
\alpha_{24}\alpha_{41} -\alpha_{21} \alpha_{14} \rho
=-\sigma \alpha_{24}\rho(\mu_1-\mu_4) \alpha_{23}.
\end{dcases}
\end{equation}
This holds if $(\alpha_{41},\alpha_{14})$ satisfies
$$
\begin{pmatrix}
-\alpha_{34} & \alpha_{31}\rho \\
\alpha_{24} & -\alpha_{21}\rho
\end{pmatrix}
\begin{pmatrix}
\alpha_{41} \\
\alpha_{14}
\end{pmatrix}
=-\alpha_{24}\rho(\mu_1-\mu_4)
\begin{pmatrix}
\alpha_{32} \\
\sigma \alpha_{23}
\end{pmatrix},
$$
which is equivalent to \eqref{cond a41} and \eqref{cond a14} (using \eqref{cond a21}, \eqref{cond a31}, \eqref{cond a34} and $\alpha_{24}, \rho \neq 0$).

\item
Let us now compute the zero order terms.
Using \eqref{rela order zero}, we immediately see that
\begin{align*}
\tilde{\opQ}^{(0)} &=
-\alpha_{24}\rho(\mu_1-\mu_4) \alpha_{32} \alpha_{23}
+\omega (\alpha_{43}+\alpha_{13}\rho),
\\
\opQ^{(0)} &=
-\sigma\alpha_{24}\rho(\mu_1-\mu_4) \alpha_{23} \alpha_{32}
+\omega (\alpha_{42}+\alpha_{12}\rho).
\end{align*}
As a result, we see that \eqref{cond diff op r} holds for $r=0$ if we have condition \eqref{cond a43 a13}.
Moreover, using \eqref{cond a21}, \eqref{cond a31} and \eqref{cond a34}, we have
\begin{equation}\label{comp delta}
\omega=-\sigma \alpha_{24}^2 \rho (\mu_1-\mu_4),
\end{equation}
so that, using again \eqref{cond a21} and the definition of $c_0$, we obtain
$$\opQ^{(0)}=\sigma\alpha_{24}\rho(\mu_1-\mu_4) (2c_0).$$

It follows that $k_{24}$ indeed satisfies the first equation in \eqref{wave equ k24} (recall that $\opQ^{(2)}=\sigma\tilde{\opQ}^{(2)}$ with \eqref{simp expr Q2} and $\opQ^{(1)}=0$).

\item
Using \eqref{equ operators 2}, \eqref{rela order zero}, \eqref{simp Q2} and \eqref{comp delta}, we can simplify the expressions in \eqref{def k22 k23} to obtain
$$
\begin{dcases}
k_{22} =\frac{1}{-\sigma\alpha_{24}}\left(\sigma \opP_3 -\alpha_{32}\right) k_{24}, \\
k_{23} =\frac{1}{-\sigma\alpha_{24}}\left(\opP_2 -\sigma\alpha_{23}\right) k_{24}.
\end{dcases}
$$

In addition, it follows from these formula that the remaining conditions are satisfied.
Indeed, the condition $k_{22}(x,-x)=0$ is exactly the condition that we require for $k_{24}$ at $(x,-x)$ in \eqref{wave equ k24} and the condition $k_{23}(x,x)=\alpha_{23}$ follows from the above expression since $k_{24}(x,x)=\alpha_{24}$ and $(\opP_2 k_{24})(x,x)=\frac{d}{dx} k_{24}(x,x)=0$.

\end{enumerate}
\end{proof}

\begin{remark}
In \cite[Section 3.3]{VK14}, the authors showed that we can solve a kernel system of two equations of the form
$$
\begin{dcases}
\px{k_{21}} -\pxi{k_{21}} +k_{22}\alpha_{21}=0, \\
\px{k_{22}} +\pxi{k_{22}} +k_{21}\alpha_{12}=0, \\
k_{21}(x,x)=\alpha_{21}, \quad k_{22}(x,0)=0,
\end{dcases}
$$
with $\alpha_{21} \neq 0$ by first expressing $k_{22}$ from the first equation and then showing that the resulting second order equation for $k_{21}$ indeed has a solution.
The method we introduced in the second proof of Proposition \ref{prop caract rank two seq}, item \ref{prop caract rank two seq i2}, can be seen as an extension of the method of \cite{VK14} where, instead of dividing by a scalar (namely, $\alpha_{21}$), we invert a matrix.
\end{remark}

\section*{Acknowledgements}

This project was supported by National Natural Science Foundation of China (Nos. 12122110 and 12071258) and National Science Centre, Poland UMO-2020/39/D/ST1/01136.
For the purpose of Open Access, the authors have applied a CC-BY public copyright licence to any Author Accepted Manuscript (AAM) version arising from this submission.

\appendix
\section{Controllability of the equivalent system}\label{app thm G}

In this appendix, we give a simple and direct proof of Corollary \ref{thm G}.
We recall that it can be deduced from Theorem \ref{thm HO21} but this result is based on the Titchmarsh convolution theorem (see \cite{HO21-COCV}) and we show here how to directly prove the corollary without resorting to this difficult result.

\begin{proof}[Proof of Corollary \ref{thm G}]
It is enough to show that, if $(q,f) \in \mathcal{S}_k \setminus \mathcal{S}_{k+1}$ for some $k \in \ens{2,\ldots,n+1}$, then system \eqref{syst G} (with $m=1$) is null controllable in time $T$ if, and only if, $T \geq \tau_k$.
\begin{enumerate}
\item
We first observe that system \eqref{syst G} is equivalent to the same system with $f_1=0$.
This follows from the invertible spatial transformation
$$
\hat{y}_1(t,x)=\tilde{y}_1(t,x)-\int_0^x h(x-\xi)\tilde{y}_1(t,\xi) \, d\xi,
$$
where the kernel $h$ is the solution to
$$
h(x)\lambda_1+\int_0^x h(x-\xi) f_1(\xi) \, d\xi=f_1(x), \quad 0<x<1.
$$
Therefore, for the rest of the proof, we assume that $f_1=0$.

\item
Assume now that $(q,f) \in \mathcal{S}_k \setminus \mathcal{S}_{k+1}$ for some $k \in \ens{2,\ldots,n}$ (the result for $k=n+1$ is trivial).
It will be convenient to use the notation $\qshift_i=q_{i-1}$ for $2 \leq i \leq n$.
Let us write system \eqref{syst G} (with $f_1=0$) component-wise:
$$
\begin{dcases}
\pt{\tilde{y}_1}(t,x)+\lambda_1 \px{\tilde{y}_1}(t,x)=0, \\
\tilde{y}_1(t,1)=\tilde{u}(t),  \\
\tilde{y}_1(0,x)=\tilde{y}_1^0(x),
\end{dcases}
\quad
\begin{dcases}
\pt{\tilde{y}_i}(t,x)+\lambda_i \px{\tilde{y}_i}(t,x)=f_i(x)v(t), \\
\tilde{y}_i(t,0)=\qshift_i v(t), \\
\tilde{y}_i(0,x)=\tilde{y}_i^0(x),
\end{dcases}
$$
for $i \in \ens{2,\ldots,n}$, and where we introduced $v(t)=\tilde{y}_1(t,0)$.
It is clear that this system is null controllable in any time $T \geq \tau_k=\max\ens{T_1+T_k,T_2}$ since in this case taking $\tilde{u}=0$ in $(T-(T_1+T_k),T)$ does the job.
It is the necessary part that requires more work.

\item
First of all, we recall that the condition $T \geq \max\ens{T_1,T_2}$ is always necessary (see e.g. the proof of \cite[Lemma 3.3]{HO21-COCV}).
Under this condition and by mimicking the second step in the proof of \cite[Theorem 3.1]{HO21-COCV}, we see that the null controllability condition $\tilde{y}_k(T,x)=0$ is equivalent to

\begin{equation}\label{cns yd}
\qshift_k\alpha(\tau)
+\int_0^{\tau} \beta(\tau-\sigma) \alpha(\sigma) \, d\sigma=0,
\quad 0<\tau<T_k,
\end{equation}
where $\alpha(\theta)=v(-\theta+T)$ and $\beta(\theta)=f_k(\lambda_k \theta)$ for $0<\theta<T_k$.

\item
We now have two possibilities for \eqref{cns yd}.

\begin{enumerate}
\item
Case $\qshift_k \neq 0$.
Then, by uniqueness of the solution to the Volterra equation of the second kind \eqref{cns yd}, we obtain $\alpha=0$ in $(0,T_k)$.
This means that $v=0$ in $(T-T_k, T)$.
Since this is true for any $\tilde{y}_0^1$, it is possible only if $T_1 \leq T-T_k$, which is the desired condition.

\item
Case $\qshift_k=0$.
Since $(q,f) \in \mathcal{S}_k \setminus \mathcal{S}_{k+1}$, we necessarily have $f_k \neq 0$.
Since $f_k$ is analytic in a neighborhood of $[0,1)$, this implies in particular that there exists $N \geq 1$ such that
$$
f_k^{(N-1)}(0) \neq 0, \quad
f_k^{(\ell)}(0)=0, \quad \forall \ell<N-1.
$$

Then, taking $N$ times the derivative with respect to $\tau$ in \eqref{cns yd} (with $\qshift_k=0$), we obtain the new Volterra equation
$$
c\alpha(\tau)
+\int_0^{\tau} \beta^{(N)}(\tau-\sigma) \alpha(\sigma) \, d\sigma=0,
\quad 0<\tau<T_k,
$$
where $c=\beta^{(N-1)}(0)=f_k^{(N-1)}(0) \lambda_k^{N-1}$.
Therefore, $c \neq 0$ and the situation is now identical to the previous case.

\end{enumerate}

\end{enumerate}

\end{proof}

\section{Solution to the kernel equations}\label{app thm K}

In this appendix, we present a new approach to solve the kernel equations that encompasses in particular the proof of Theorem \ref{thm existence and analyticity}.
We recall that, when considering the kernel equations in the triangle $\Tau=\ens{(x,\xi) \in \R^2 \st 0<\xi<x<1}$, the approach used in all current results in the literature (\cite{CVKB13,DMVK13,HDM15,HDMVK16,HVDMK19,CN19}, etc.) consists in adding ``artificial boundary conditions'' to close the system of kernel equations.
In our approach, we will not consider the condition at $(x,x)$ as a boundary condition but rather as an initial condition.
We will simply let propagate this condition along the characteristics and find the corresponding so-called domain of determinacy, much in the spirit of the reference books \cite{LY85,Bre00}.
Then, another idea of our method is also to solve the equation for $j=i$ and plug it into the other equations of the system to obtain a new system with initial conditions at $(x,x)$ only (as in the proof of Theorem \ref{thm derivatives}).
Moreover, this gives a natural bound in $\abs{x-\xi}$ for the estimates needed to prove the contraction of the mapping defining the integral equations corresponding to the new system (rather than $\abs{x-(1-\epsilon)\xi}$ as in \cite{HDMVK16,HVDMK19}).

All along this appendix, $i \in \ens{1,\ldots,n}$ is fixed and we continue using the notation $k=(k_{ij})_{1 \leq j \leq n}$ to denote the transpose of the $i$-th row of $K$.
We also emphasize that $m \geq 1$ is arbitrary.

First of all, it will be more convenient to work with the kernel equations normalized by $\lambda_i$:
\begin{equation}\label{kern equ compo b}
\begin{dcases}
\px{k_j}(x,\xi)
+\lambdan_j\pxi{k_j}(x,\xi) 
+\sum_{r=1}^n k_r(x,\xi) \mn_{rj}=0,
\\
k_j(x,x)=f_j \quad (j \neq i),
\quad k_i(x,\delta x)=0,
\end{dcases}
\end{equation}
where
$$
\lambdan_j=\frac{\lambda_j}{\lambda_i}
, \quad
\mn_{rj}=\frac{m_{rj}}{\lambda_i}
, \quad
f_j=
\frac{m_{ij}}{\lambda_i-\lambda_j}.
$$

From now on, we will assume for instance that $i \geq m+1$, so that $\lambda_i>0$ and thus, from \eqref{hyp speeds},
\begin{equation}\label{new speed order}
\lambdan_1<\cdots<\lambdan_{i-1}<1<\lambdan_{i+1}<\cdots<\lambdan_n.
\end{equation}
For every $(x,\xi) \in \R \times \R$, we denote by $s \mapsto \zeta_j(s;x,\xi)$ the solution to
$$
\begin{dcases}
\dds \zeta_j(s;x,\xi)=\lambdan_j, \quad \forall s \in \R, \\
\zeta_j(x;x,\xi)=\xi.
\end{dcases}
$$

Let us now consider the more general condition
$$k_j(x,x)=f_j(x) \quad (j \neq i),$$
where $f_j$ is a function defined on an interval of the form $[a,b]$ with $a<0<b$.
Even if $f_j$ is constant in \eqref{kern equ compo b}, we will need to consider space-dependent data to deduce the existence of smooth solutions by an inductive argument.
We will describe the largest domain $D \subset \R^2$ where the system can then be solved along the characteristics.
We first take care of the characteristics for $j \neq i$.
Recalling the ordering \eqref{new speed order}, we introduce
$$
D^{\mathfrak{c}}=\ens{(x,\xi) \in \R^2 \st
\begin{array}{l}
\zeta_{i-1}(x;a,a)<\xi<\zeta_{i-1}(x;b,b)
\\
\zeta_{i+1}(x;b,b)<\xi<\zeta_{i+1}(x;a,a)
\end{array}
},
$$
(see Figure \ref{figure domain Dc}).
Above, we use the usual conventions for $i=1$ and $i=n$.
We now take care of the characteristic for $j=i$.
We can check that the line $\ens{(x,\delta x) \st a<x<b}$ intersects the boundary of $D^{\mathfrak{c}}$ at exactly two points $(c,\delta c)$ and $(d,\delta d)$, with $c,d \in (a,b)$ and $c<0<d$ if $\delta<1$ or $d<0<c$ if $\delta>1$.
Let then
$$
D=\ens{(x,\xi) \in D^{\mathfrak{c}} \st
\zeta_i(x;d,\delta d)< \xi < \zeta_i(x;c,\delta c)
},
$$
(see Figure \ref{figure domain D} with $\delta=-1$) and define $I=(c,d)$.
Here and in what follows, it will be convenient to use the notation $(\alpha,\beta)$ to denote the interval $(\min\ens{\alpha,\beta},\max\ens{\alpha,\beta})$, whatever $\alpha,\beta \in \R$ are (we use a similar notation for $[\alpha,\beta]$).

\def\figlambdaone{-2}
\def\figlambdatwo{-1/5}
\def\figlambdathree{1}
\def\figlambdafour{1.5}
\def\figa{-0.75}
\def\figb{1.15}
\def\xtop{0.5911765}
\def\ytop{1.2617647}
\def\xbot{-0.1911765}
\def\ybot{-0.8617647}
\def\Ymax{2.1}
\def\Ymin{-1.7}

\def\figdelta{-1}
\def\figc{-0.15}
\def\xtopbis{0.9}
\def\ytopbis{1.2}

\def\figd{0.23}
\def\xbotbis{-0.3666667}
\def\ybotbis{-0.8266667}

\begin{figure}[h!]
\begin{minipage}[c]{0.5\textwidth}
\centering

\begin{tikzpicture}[scale=1.5]
\draw[->] (-1,0)--(1.4,0) node[right]{$x$};
\draw[->] (0,\Ymin-0.25)--(0,\Ymax+0.25) node[above]{$\xi$};

\node at (\figa,0) (a) {};
\node [below left=0.05cm and 0.05cm of a] {$a$};
\node at (\figb,0) (b) {};
\node [below right=0.05cm and 0.05cm of b] {$b$};
\draw[dashed] (\figa,\Ymin-0.25)--(\figa,\Ymax+0.25);
\draw[dashed] (\figb,\Ymin-0.25)--(\figb,\Ymax+0.25);

\fill[gray!60,opacity=0.4] (\figa,\figa) -- (\xtop,\ytop) -- (\figb,\figb) -- (\xbot,\ybot) -- (\figa,\figa);

\draw[thick, domain=\figa:\figb] plot (\x, {\x});
\draw[ultra thick, domain=\figa:\figb] plot (\x, {\figlambdatwo*(\x-\figa)+\figa});
\draw[ultra thick, domain=\figa:\figb] plot (\x, {\figlambdatwo*(\x-\figb)+\figb});
\draw[ultra thick, domain=\figa:\figb] plot (\x, {\figlambdafour*(\x-\figb)+\figb});
\draw[ultra thick, domain=\figa:\figb] plot (\x, {\figlambdafour*(\x-\figa)+\figa});

\end{tikzpicture}

\captionof{figure}{Domain $D^\mathfrak{c}$}\label{figure domain Dc}
\end{minipage}
\begin{minipage}[c]{0.5\textwidth}
\centering

\begin{tikzpicture}[scale=1.5]
\draw[->] (-1,0)--(1.4,0) node[right]{$x$};
\draw[->] (0,\Ymin-0.25)--(0,\Ymax+0.25) node[above]{$\xi$};

\node at (\figa,0) (a) {};
\node [below left=0.05cm and 0.05cm of a] {$a$};
\node at (\figb,0) (b) {};
\node [below right=0.05cm and 0.05cm of b] {$b$};
\draw[dashed] (\figa,\Ymin-0.25)--(\figa,\Ymax+0.25);
\draw[dashed] (\figb,\Ymin-0.25)--(\figb,\Ymax+0.25);

\fill[gray!60,opacity=0.4] (\figa,\figa) -- (\xtop,\ytop) -- (\figb,\figb) -- (\xbot,\ybot) -- (\figa,\figa);

\draw[domain=\figa:\figb] plot (\x, {\x});
\draw[domain=\figa:\figb] plot (\x, {\figlambdatwo*(\x-\figa)+\figa});
\draw[domain=\figa:\figb] plot (\x, {\figlambdatwo*(\x-\figb)+\figb});
\draw[domain=\figa:\figb] plot (\x, {\figlambdafour*(\x-\figb)+\figb});
\draw[domain=\figa:\figb] plot (\x, {\figlambdafour*(\x-\figa)+\figa});

\fill[gray!60,opacity=0.8] (\figa,\figa) -- (\figc,\figdelta*\figc) -- (\xtopbis,\ytopbis) -- (\figb,\figb) -- (\figd,\figdelta*\figd) -- (\xbotbis,\ybotbis) -- (\figa,\figa);

\draw[thick, domain=\figa:\figb] plot (\x, {-\x});
\draw[ultra thick, domain=\figa:\figb] plot (\x, {\figlambdathree*(\x-\figc)-\figc});
\draw[ultra thick, domain=\figa:\figb] plot (\x, {\figlambdathree*(\x-\figd)-\figd});

\draw[dashed] (\figc,\figdelta*\figc) -- (\figc,0) node {$\times$} node[below] {$c$};

\draw[dashed] (\figd,\figdelta*\figd) -- (\figd,0) node {$\times$} node[above] {$d$};

\end{tikzpicture}

\captionof{figure}{Domain $D$ (in dark gray)}\label{figure domain D}
\end{minipage}
\end{figure}

We will prove the following result.

\begin{theorem}\label{thm exist kern}
Let $a<0<b$ and $s \in \N$ be fixed.
For any $(f_j)_{j \neq i} \in C^s([a,b])^{n-1}$ and $f_i \in C^s(\clos{I})$, there exists a unique solution $k=(k_j)_{1 \leq j \leq n} \in C^s(\clos{D})^n$ to
\begin{equation}\label{kern equ with BC}
\begin{dcases}
\px{k_j}(x,\xi)
+\lambdan_j\pxi{k_j}(x,\xi) 
+\sum_{r=1}^n k_r(x,\xi) \mn_{rj}=0, \quad (x,\xi) \in D,
\\
k_j(x,x)=f_j(x), \quad x \in (a,b) \quad (j \neq i), \\
k_i(x,\delta x)=f_i(x), \quad x \in I,
\end{dcases}
\end{equation}
Moreover, we have the estimate
\begin{equation}\label{thm exist kern estimate}
\norm{k}_{C^s(\clos{D})^n} \leq C  \max\ens{
\max_{j \neq i}\norm{f_j}_{C^s([a,b])},
\quad
\norm{f_i}_{C^s(\clos{I})}
},
\end{equation}
for some $C>0$ that does not depend on any $f_j$.

\end{theorem}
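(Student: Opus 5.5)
We will follow the strategy announced at the beginning of this appendix: eliminate the component $k_i$, so that only data on the diagonal remain, and then solve the resulting system by a fixed point along the characteristics.

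\emph{Step 1: elimination of $k_i$.} The $i$-th equation has speed $\lambdan_i=1$, and its characteristics are the lines $\xi-x=\mathrm{const}$. Integrating from the point where such a line meets $\{\xi=\delta x\}$ gives
$$
k_i(x,\xi)=f_i\Bigl(\tfrac{x-\xi}{1-\delta}\Bigr)-\int_{\frac{x-\xi}{1-\delta}}^{x}\sum_{r\neq i} \mn_{ri}\, k_r(\sigma,\sigma-x+\xi)\,d\sigma .
$$
Here we use $\mn_{ii}=0$, or we keep this term as a zero-order factor $e^{-\mn_{ii}(\cdot)}$ if it is not zero. The point of intersection has abscissa $\frac{x-\xi}{1-\delta}\in I$ precisely because $(x,\xi)\in D$; this is how $D$ was cut out of $D^{\mathfrak{c}}$.

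\emph{Step 2: integral equations for the other components.} For $j\neq i$ we integrate along the $j$-characteristic $s\mapsto(s,\zeta_j(s;x,\xi))$, backwards to the point where it hits the diagonal. Since $\lambdan_j\neq1$, this happens at $s_j(x,\xi)=\frac{\xi-\lambdan_j x}{1-\lambdan_j}$. This gives
$$
k_j(x,\xi)=f_j(s_j)-\int_{s_j}^{x}\sum_{r}\mn_{rj}\,k_r(s,\zeta_j(s;x,\xi))\,ds .
$$
Substituting the expression of Step 1 for $k_i$ yields a closed system $k^{\mathfrak{c}}=\Phi(f)+\mathcal{L}k^{\mathfrak{c}}$. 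It has to be checked that every point reached stays in $D$, i.e.\ that $D$ is a domain of determinacy. For $D^{\mathfrak{c}}$ this follows from the ordering \eqref{new speed order}: backward characteristics through a point of $D^{\mathfrak{c}}$ have intermediate slopes, so they remain inside the region bounded by the extreme characteristics $\zeta_{i-1},\zeta_{i+1}$ issued from $(a,a)$ and $(b,b)$. For the $i$-characteristic segments of Step 1, the analogous statement uses the boundary lines $\zeta_i(\cdot;c,\delta c)$ and $\zeta_i(\cdot;d,\delta d)$. This geometric verification is the step I expect to be the main obstacle. It requires a case analysis in which the integration paths cross the diagonal (the ``other side'' issue), and it must be handled uniformly in $\delta$.

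\emph{Step 3: contraction.} Every integral in $\mathcal{L}$ runs over a segment whose length is bounded by $C|x-\xi|$. This holds because $|x-s_j|=\frac{|x-\xi|}{|1-\lambdan_j|}$, and $\frac{x-\xi}{1-\delta}$ is also comparable to $|x-\xi|$. Iterating therefore gives
$$
|\mathcal{L}^N g|(x,\xi)\leq \frac{(C|x-\xi|)^N}{N!}\,\|g\|_\infty .
$$
Equivalently, $\mathcal{L}$ is a contraction for a weighted norm of the form $\sup e^{-L|x-\xi|}|g|$. So the Neumann series converges in $C^0(\clos{D})$. This gives existence, uniqueness and the estimate \eqref{thm exist kern estimate} for $s=0$. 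Uniqueness of solutions along characteristics follows from the same integral formulation.

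\emph{Step 4: regularity.} We proceed by induction on $s$. Because the coefficients are constant, the derivative $\partial_x k+\partial_\xi k$ (tangential to the diagonal) solves the same system. Its data are $\frac{d}{dx}f_j$ on the diagonal, and on $\{\xi=\delta x\}$ they are obtained from $f_i'$ combined with the equation itself, expressed through values of $k$ already controlled. The other, transversal, derivative is recovered from the PDE. Applying the $C^{s-1}$ result to these derivatives gives $C^s$ regularity with the estimate. This is why data depending on $x$ were allowed in the statement. The constants remain geometric, which later yields \eqref{estim k} and, letting $a\to-\infty$ and $b\to+\infty$, Theorem~\ref{thm existence and analyticity}.
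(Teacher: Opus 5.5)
Your plan (eliminate $k_i$ along its characteristics, write integral equations for $\knoi$, bound the iterates by $C^N\abs{x-\xi}^N/N!$, then induct on $s$) is the paper's. The gap is in Step~3: the justification you give for the key estimate is false, and the fact that actually makes the estimate true is missing.

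It is not true that every integral in $\mathcal{L}$ runs over a segment of length $O(\abs{x-\xi})$. The inner integral coming from the substituted $k_i$ runs along the $i$-characteristic, from an outer point $(s,\zeta_j(s;x,\xi))$ to the line $\ens{\xi=\delta x}$. Its length is $\abs{\frac{\zeta_j(s;x,\xi)-\delta s}{1-\delta}}$, which measures the distance to that line, not to the diagonal. Already in your Step~1 formula, at a diagonal point $(x,x)$ with $x\neq 0$, the integral runs over $[0,x]$ although $\abs{x-\xi}=0$. That the foot abscissa $\frac{x-\xi}{1-\delta}$ is $O(\abs{x-\xi})$ says nothing about the segment length, so segment lengths cannot drive the iteration.

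What closes the induction is that $\lambdan_i=1$. The $i$-characteristics are parallel to the diagonal, so
\[
\eta-\zeta_i\bigl(\eta;s,\zeta_j(s;x,\xi)\bigr)=s-\zeta_j(s;x,\xi)
\]
along the whole inner segment. Consequently:
\begin{itemize}
\item the induction hypothesis gives the same weight $C^N\abs{s-\zeta_j(s;x,\xi)}^N/N!$ at every inner point;
\item the inner integral only costs a bounded factor, namely the maximal length of $i$-segments in the bounded set $\clos{D}$;
\item the gain $1/(N+1)$ comes from the outer integral alone, through $\abs{s-\zeta_j(s;x,\xi)}=\abs{1-\lambdan_j}\,\abs{s-\sigma_j(x,\xi)}$.
\end{itemize}
Your weighted-norm variant with $e^{-L\abs{x-\xi}}$ needs exactly the same invariance. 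This is the one nontrivial point the paper singles out in its induction. You even record in Step~1 that the $i$-characteristics are the lines $\xi-x=\mathrm{const}$, but you never use it where it matters.

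The same observation also removes your anticipated ``main obstacle'' in Step~2. $D$ is the convex parallelogram $D^{\mathfrak{c}}$ cut by two lines parallel to the diagonal, one on each side of it. No integration path crosses the diagonal, and convexity of $\clos{D}$ keeps every path inside once its endpoints are.

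There is a smaller slip in Step~4. The transversal derivative cannot be ``recovered from the PDE'' for $j=i$, because $\partial_x+\partial_\xi$ is exactly the characteristic derivative of the $i$-th equation. The equation only gives $(\partial_x+\partial_\xi)k_i=-\sum_{r}\mn_{ri}k_r$, which is also the datum of the differentiated $i$-component on $\ens{\xi=\delta x}$. It says nothing about $\pxi{k_i}$. That derivative must come from differentiating your Step~1 formula, and this is where $f_i\in C^s$ is really used. For $j\neq i$ your argument is fine, since $\lambdan_j\neq 1$.

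Identifying $(\partial_x+\partial_\xi)k$ with the $C^{s-1}$ solution of the differentiated problem also needs an argument, for example difference quotients along $(1,1)$, which preserve the diagonal and the $i$-characteristics. The paper only cites Bressan for this step, so you are at the same level of detail there. Both Step~4 points are easily repaired; the Step~3 issue requires the missing invariance above.
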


For $s=0$, by solution we mean ``solution along the characteristics'', see below.

The first part of Theorem \ref{thm existence and analyticity} follows from the previous result and the following simple observation:
\begin{equation}\label{basic prop D}
\forall V \subset \R^2, \exists a<0<b, \quad V \subset D.
\end{equation}

On the other hand, using that the coefficients of the system are constant and arguing as in the proof of \cite[Lemma 6.2]{CN19}, we can show that \eqref{thm exist kern estimate} holds with $C=R^s$ for some $R>0$ that does not depend on $s$.
This establishes the estimate in Theorem \ref{thm existence and analyticity}.

Let us now prove Theorem \ref{thm exist kern}.
We start with a description of the key properties satisfied by the point where the $j$-th characteristic intersects the corresponding data line.

\begin{lemma}\label{prop char remain in D}
For every $j \in \ens{1,\ldots,n}$, there exists $\sigma_j \in C^{\infty}(\clos{D})$ such that, for every $(x,\xi) \in \clos{D}$, we have:
\begin{itemize}
\item
$\zeta_j(\sigma_j(x,\xi);x,\xi)=\sigma_j(x,\xi)$ with $\sigma_j(x,\xi) \in [a,b]$ for $j \neq i$ and $\zeta_i(\sigma_i(x,\xi);x,\xi)=\delta \sigma_i(x,\xi)$ with $\sigma_i(x,\xi) \in \clos{I}$.

\item
$(s,\zeta_j(s;x,\xi)) \in \clos{D}$ for every $s \in [\sigma_j(x,\xi), x]$.

\item
For every $j \neq i$, we have
\begin{equation}\label{estim sigmaj-x}
\abs{\sigma_j(x,\xi)-x} \leq C \abs{x-\xi},
\end{equation}
for some $C>0$ that does not depend on $j,x,\xi$.
\end{itemize}

\end{lemma}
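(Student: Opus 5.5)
The plan is to compute $\sigma_j$ explicitly, since all characteristics are straight lines. For $j \neq i$ the characteristic through $(x,\xi)$ is $s \mapsto \xi + \lambdan_j (s-x)$. It meets the diagonal $\xi=s$ when $\xi+\lambdan_j(s-x)=s$. Because $\lambdan_j \neq 1$, this gives
$$\sigma_j(x,\xi)=\frac{\xi-\lambdan_j x}{1-\lambdan_j}=x+\frac{\xi-x}{1-\lambdan_j}.$$
This is affine, hence $C^\infty$. It also yields \eqref{estim sigmaj-x} at once, with $C=\max_{j\neq i} 1/\abs{1-\lambdan_j}$, which is finite and independent of $x,\xi$.

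For $j=i$ we have $\lambdan_i=1$, so the characteristic is $s\mapsto \xi+s-x$. It meets the line $\{(s,\delta s)\}$ when $\xi-x=(\delta-1)s$, that is, $\sigma_i(x,\xi)=(x-\xi)/(1-\delta)$. This is again affine, and well defined because $\delta\neq1$. The first bullet is then the defining identity in each case.

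The remaining and main step is geometric: show $\sigma_j(x,\xi)\in[a,b]$ (resp. $\clos{I}$), and show that the segment $\{(s,\zeta_j(s;x,\xi)) : s\in[\sigma_j,x]\}$ stays in $\clos{D}$. I would use convexity. By definition, $D$ is an intersection of open strips bounded by lines of slope $\lambdan_{i-1}$, $\lambdan_{i+1}$ and $1$, so $D$ is convex. The segment joins $(x,\xi)\in\clos{D}$ to its foot point $P_j=(\sigma_j,\sigma_j)$ or $(\sigma_i,\delta\sigma_i)$, so by convexity it suffices to show that $P_j\in\clos{D}$.

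To check that $P_j\in\clos{D}$, I would exploit the ordering \eqref{new speed order}. The strip bounded by slope-$\lambdan_{i-1}$ lines through $(a,a)$ and $(b,b)$ is invariant under moving along any characteristic whose slope lies between $\lambdan_1$ and $\lambdan_{i-1}$, up to the point where it reaches the diagonal. More precisely, I would argue with the linear functionals $\ell_{\kappa}(x,\xi)=\xi-\kappa x$ for $\kappa=\lambdan_{i-1},\lambdan_{i+1},1$. The region $D$ is $\{\ell_\kappa\in J_\kappa\}$ for suitable intervals $J_\kappa$, and $J_\kappa$ is exactly the image under $\ell_\kappa$ of the diagonal segment $[a,b]$, respectively of the segment $\{(s,\delta s): s\in \clos{I}\}$.

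Along the diagonal the map $s\mapsto\ell_\kappa(s,s)=(1-\kappa)s$ is monotone. Writing $\ell_\kappa(x,\xi)$ in terms of $\sigma_j$ and comparing slopes then shows $\sigma_j\in[a,b]$. For instance, when $j<i$ one has $\lambdan_j\le\lambdan_{i-1}$, and the sign of $\lambdan_j-\kappa$ makes $(1-\kappa)\sigma_j$ a convex combination of $\ell_\kappa(x,\xi)$-type bounds. The case $j=i$ uses the construction of $c,d$ as the intersection points of $\{(x,\delta x)\}$ with $\partial D^{\mathfrak{c}}$, so that $\{(s,\delta s): s\in\clos{I}\}\subset \clos{D^{\mathfrak c}}$, together with the slope-1 constraint defining $D$.

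The obstacle I expect is this case bookkeeping: handling $j<i$ versus $j>i$, the conventions when $i=1$ or $i=n$, and the sign of $\delta-1$. I would organize it by checking each constraint $\ell_\kappa\in J_\kappa$ separately at the foot point.
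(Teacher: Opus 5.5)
Your explicit formulas for $\sigma_j$ and $\sigma_i$ are correct, and so are the estimate \eqref{estim sigmaj-x}, the convexity reduction (it suffices that the foot point $P_j$ lies in $\clos{D}$), and the whole case $j=i$. This is the explicit verification the paper only alludes to.

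The gap is in the case $j\neq i$, at the one step that carries the content: proving $\sigma_j(x,\xi)\in[a,b]$. At a diagonal point $(s,s)$, both constraints with $\kappa=\lambdan_{i\pm1}$ read $s\in[a,b]$, and the slope-$1$ constraint holds automatically. So ``checking each constraint at the foot point'' merely restates the goal.

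The mechanism you offer is that the slope-$\lambdan_{i-1}$ strip is invariant along characteristics of smaller slope, up to the diagonal. This is false. Take $a=-1$, $b=1$, $\lambdan_{i-1}=1/2$, $\lambdan_j=-1$. The point $(10,5)$ lies in the strip $\ens{-1<2\xi-x<1}$, but its characteristic $\xi=15-s$ meets the diagonal at $(15/2,15/2)$, outside the strip. No single strip can work, because $x$ is unbounded on it. You have to use the two strips, whose slopes lie on opposite sides of $1$, together.

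The missing idea is short. Set $t=\xi-x$ and $\theta_k=1/(1-\lambdan_k)$ for $k\neq i$. Then $\sigma_k(x,\xi)=x+\theta_k t$, which is affine in $\theta_k$. The map $\kappa\mapsto 1/(1-\kappa)$ is increasing on each side of $1$, positive on the left and negative on the right. Hence \eqref{new speed order} gives
\begin{itemize}
\item $\theta_{i+1}<0<\theta_j\leq\theta_{i-1}$ if $j<i$;
\item $\theta_{i+1}\leq\theta_j<0<\theta_{i-1}$ if $j>i$.
\end{itemize}
So $\sigma_j(x,\xi)$ is a convex combination of $\sigma_{i-1}(x,\xi)$ and $\sigma_{i+1}(x,\xi)$. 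Both lie in $[a,b]$ on $\clos{D^{\mathfrak{c}}}$ by the very definition of $D^{\mathfrak{c}}$. Then $(\sigma_j,\sigma_j)\in\clos{D}$, since $\sigma_i(s,s)=0\in I$, and your convexity argument concludes.

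For $i=1$ or $i=n$, the convention must be read as $\theta_0=0$, resp. $\theta_{n+1}=0$. That is, the missing strip is replaced by $a<x<b$, the dashed vertical lines in Figure~\ref{figure domain Dc}. This is not mere bookkeeping: if the missing strip is simply dropped, $x$ is no longer confined to $[a,b]$ and $\sigma_j\in[a,b]$ can fail.
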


We point out that $\zeta_j$ and $\sigma_j$ are explicit.
In particular, this is how we prove estimate \eqref{estim sigmaj-x}.

Now, instead of writing \eqref{kern equ with BC} along all the characteristics (as it is usually done), we first replace $k_i$ by formally solving the corresponding equation (recall that $\mn_{ii}=0$):
\begin{equation}\label{equation ki}
k_i(x,\xi)=f_i(\sigma_i(x,\xi))
-\int_{\sigma_i(x,\xi)}^x \sum_{r \neq i} k_r(\eta,\zeta_i(\eta;x,\xi)) \mn_{ri} \, d\eta.
\end{equation}
Let us introduce the following notations to exclude the $i$-th components: $\knoi=(k_j)_{j \neq i}$, $\fnoi=(f_j)_{j \neq i}$, $\sigmanoi=(\sigma_j)_{j \neq i}$, $\zetanoi=(\zeta_j)_{j \neq i}$, $\Mnoi=(\mn_{rj})_{r,j \neq i}$, $\psi=(\mn_{ij})_{j \neq i}$ and $w=(\mn_{ji})_{j \neq i}$.
Then, plugging the previous expression of $k_i$ in \eqref{kern equ with BC} and integrating along the characteristics, we can transform this system into the following system of integral equations for $\knoi$:
\begin{align}
\knoi_{\ell}(x,\xi)
=& \fnoi_\ell(\sigmanoi_\ell(x,\xi))
-\int_{\sigmanoi_\ell(x,\xi)}^x \sum_{r=1}^{n-1} \knoi_r(s,\zetanoi_\ell(s;x,\xi))\mnoi_{r \ell} \, ds
\nonumber
\\
&
- \int_{\sigmanoi_\ell(x,\xi)}^x f_i(\sigmanoi_i(s,\zetanoi_\ell(s;x,\xi))) \psi_\ell \, ds 
\nonumber
\\
& + \int_{\sigmanoi_\ell(x,\xi)}^x \left(\int_{\sigmanoi_i(s,\zetanoi_\ell(s;x,\xi))}^s \sum_{r=1}^{n-1} \knoi_r(\eta,\zeta_i(\eta;s,\zetanoi_\ell(s;x,\xi))) w_r \, d\eta\right) \psi_\ell \, ds,
\label{int equ}
\end{align}
for every $\ell \in \ens{1,\ldots,n-1}$ and $(x,\xi) \in \clos{D}$.

All the quantities in \eqref{equation ki} and \eqref{int equ} are well defined thanks to Lemma \ref{prop char remain in D}.
It remains to prove the existence and uniqueness of a $C^s$ solution $\knoi$ to this system of integral equations.
We start with $s=0$.
As usual, we use the Banach fixed point theorem and suitable estimates.
A solution to this system is a fixed point of the map $F(\knoi)=u^0+\Phi \knoi$, where
$$
u^0_\ell(x,\xi)=
\fnoi_\ell(\sigmanoi_\ell(x,\xi))
- \int_{\sigmanoi_\ell(x,\xi)}^x f_i(\sigmanoi_i(s,\zetanoi_\ell(s;x,\xi)) \psi_\ell \, ds,
$$
and $\Phi$ is the linear map $\Phi=\Phi_1+\Phi_2$ with
$$
(\Phi_1 \knoi)_\ell(x,\xi)
=
-\int_{\sigmanoi_\ell(x,\xi)}^x \sum_{r=1}^{n-1} \knoi_r(s,\zetanoi_\ell(s;x,\xi))\mnoi_{r \ell} \, ds,
$$
and
$$
(\Phi_2 \knoi)_\ell(x,\xi)
=\int_{\sigmanoi_\ell(x,\xi)}^x \left(\int_{\sigmanoi_i(s,\zetanoi_\ell(s;x,\xi))}^s \sum_{r=1}^{n-1} \knoi_r(\eta,\zeta_i(\eta;s,\zetanoi_\ell(s;x,\xi))) w_r \, d\eta\right) \psi_\ell \, ds,
$$
for every $\ell \in \ens{1,\ldots,n-1}$ and $(x,\xi) \in \clos{D}$.

Let us now precisely set the functional framework.
Let $B=C^0(\clos{D})^{n-1}$ and consider the standard norm
$\norm{\knoi}_B=\max_{1 \leq \ell \leq n-1} \max_{(x,\xi) \in \clos{D}} \abs{\knoi_\ell(x,\xi)}$.
Clearly, $B$ is a Banach space and $F(B) \subset B$.
Let us now prove that $F^N$ is a contraction for $N \in \N^*$ large enough.
This is equivalent to show that $\Phi^N$ is a contraction.
To this end, it is sufficient to prove the following key estimate:

\begin{lemma}
There exists $C>0$ such that, for every $N \in \N^*$, we have
$$
\abs{(\Phi^N \knoi)_\ell(x,\xi)}
\leq
\frac{C^N \abs{x-\xi}^N}{N!}
\norm{\knoi}_B,
$$
for every $\knoi \in B$, $\ell \in \ens{1,\ldots,n-1}$ and $(x,\xi) \in \clos{D}$.

\end{lemma}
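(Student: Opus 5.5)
We argue by induction on $N$, using as induction hypothesis the exact statement of the lemma: for every $\knoi \in B$ and every $(x,\xi) \in \clos{D}$, $\abs{(\Phi^N \knoi)_\ell(x,\xi)} \leq C^N \abs{x-\xi}^N \norm{\knoi}_B/N!$. The quantity that drives the argument is $\abs{x-\xi}$, the distance to the diagonal, which we track along each characteristic. The geometric facts needed are these. Along the $\ell$-th characteristic through $(x,\xi)$ ($\ell \neq i$), the point $(s,\zetanoi_\ell(s;x,\xi))$ satisfies
$$s-\zetanoi_\ell(s;x,\xi)=(x-\xi)\frac{s-\sigmanoi_\ell}{x-\sigmanoi_\ell},$$
because $s-\zeta_\ell(s)$ is affine in $s$ and vanishes at $s=\sigmanoi_\ell$. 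Hence
$$\abs{s-\zetanoi_\ell(s;x,\xi)}=\abs{x-\xi}\,\frac{\abs{s-\sigmanoi_\ell}}{\abs{x-\sigmanoi_\ell}}.$$
Moreover, $\abs{x-\sigmanoi_\ell}$ is comparable to $\abs{x-\xi}$: the bound $\abs{\sigmanoi_\ell-x}\leq C\abs{x-\xi}$ is \eqref{estim sigmaj-x}, and the reverse bound holds since $\abs{x-\xi}=\abs{1-\lambdan_\ell}\abs{x-\sigmanoi_\ell}$.

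\textbf{Case $\Phi_1$.} Since $\abs{(s-\zeta_\ell(s))-\sigma}$ is a linear function of $s$ with slope $\abs{1-\lambdan_\ell}\geq c>0$ (the speeds $\lambdan_\ell$ are distinct from $1$), we can change variables $\tau=\abs{s-\zetanoi_\ell(s;x,\xi)}$. Then
$$\int_{\sigmanoi_\ell}^x \frac{C^{N}\abs{s-\zeta_\ell(s)}^{N}}{N!}\, ds \leq \frac{1}{\abs{1-\lambdan_\ell}}\int_0^{\abs{x-\xi}}\frac{C^N\tau^N}{N!}\,d\tau=\frac{C^N\abs{x-\xi}^{N+1}}{\abs{1-\lambdan_\ell}(N+1)!}.$$
This is exactly the factorial gain needed.

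\textbf{Case $\Phi_2$.} There is an inner integral along the $i$-th characteristic (slope $1$, parallel to the diagonal). On it, $\eta-\zeta_i(\eta;s,\zeta)=s-\zeta$ is constant, so the induction bound is constant along it. The inner integral has length $\abs{s-\sigma_i(s,\zeta)}$. Because the line $\xi=\delta x$ with $\delta\neq1$ is transversal to the slope-$1$ lines, we get $\abs{s-\sigma_i(s,\zeta)}=\abs{s-\zeta}/\abs{1-\delta}$. This linear-in-distance estimate is the crucial point; if $\Phi_2$ only gave a bound by a bounded length, we would lose the factorial. The inner integral is therefore bounded by $\frac{1}{\abs{1-\delta}}\cdot C^N\abs{s-\zeta}^{N+1}/N!$.

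Integrating over the outer characteristic as in the $\Phi_1$ case produces $\abs{x-\xi}^{N+2}/(N+2)!$ times $(N+2)$ from the inner integral. We then write $\abs{x-\xi}^{N+2}\leq \mathrm{diam}(D)\,\abs{x-\xi}^{N+1}$, using that $D$ is bounded, and the factor $(N+2)/(N+2)!=1/(N+1)!$ gives exactly the desired form. The constant $C$ is chosen once, at least $\max(\abs{\Mnoi},\abs{w}\abs{\psi})$ times $(n-1)$ times the geometric constants times $(1+\mathrm{diam}\,D)$.

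\textbf{Expanding $\Phi^N$.} To avoid cross terms, we do not expand $\Phi^N=(\Phi_1+\Phi_2)^N$. Instead, we apply the step $\Phi(\Phi^{N}\knoi)$ using the pointwise bound on $\Phi^N\knoi$ as a function of $\abs{\cdot-\cdot}$, which is what the induction hypothesis gives. The base case $N=1$ is the same computation with $N=0$.

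\textbf{Main obstacle.} The step I expect to be hardest is justifying that all points encountered stay in $\clos{D}$, which is given by Lemma \ref{prop char remain in D}, and making the change of variable for the $\ell$-characteristic uniform when $\sigmanoi_\ell$ and $x$ may lie on either side of $0$. This is handled by the explicit linear formula above, which holds independently of sign.
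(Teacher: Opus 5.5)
Your induction scheme, via $\Phi^{N+1}\knoi=\Phi_1\Phi^N\knoi+\Phi_2\Phi^N\knoi$ with the induction bound applied pointwise along characteristics, is the paper's. Your $\Phi_1$ step is correct. The exact relations $s-\zetanoi_\ell(s;x,\xi)=(1-\lambdan_\ell)(s-\sigmanoi_\ell)$ and $x-\xi=(1-\lambdan_\ell)(x-\sigmanoi_\ell)$ are slightly sharper than the paper's separate constants $C_3$, $C_4$.

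The gap is in the $\Phi_2$ step, exactly at the point you call crucial. The inner integral runs along the slope-one characteristic from $(s,\zeta)$ to the line $\xi=\delta x$. It meets that line at $\sigma_i(s,\zeta)=\frac{s-\zeta}{1-\delta}$, so its length is
$$\abs{s-\sigma_i(s,\zeta)}=\frac{\abs{\zeta-\delta s}}{\abs{1-\delta}},$$
and not $\abs{s-\zeta}/\abs{1-\delta}$, which is $\abs{\sigma_i(s,\zeta)}$. This length does not vanish on the diagonal. At $(s,s)$ with $s\neq 0$, the $i$-th characteristic is the diagonal itself and the length is $\abs{s}$. So your bound $C^N\abs{s-\zeta}^{N+1}/(\abs{1-\delta}\,N!)$ for the inner integral is false. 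The extra power of $\abs{x-\xi}$ that you then trade against $\mathrm{diam}(D)$ does not exist.

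The repair is simple and is exactly the paper's argument. Your claim that a merely bounded inner length would lose the factorial is also incorrect. Since $\eta-\zeta_i(\eta;s,\zeta)=s-\zeta$, the induction bound on the integrand is constant along the inner characteristic. Hence the inner integral is at most $L\,\big(\sum_r\abs{w_r}\big)\,C^N\abs{s-\zetanoi_\ell(s;x,\xi)}^N\norm{\knoi}_B/N!$, where $L=\max_{(s,\zeta)\in\clos{D}}\abs{s-\sigma_i(s,\zeta)}<\infty$. This $L$ is what enters the paper's constant $C_2$.

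The outer integral along the $\ell$-th characteristic then yields $\abs{x-\xi}^{N+1}/(\abs{1-\lambdan_\ell}(N+1))$, exactly as for $\Phi_1$. The factorial gain comes from the outer integral alone. Choosing $C\geq\max_\ell\big(\sum_r\abs{\mnoi_{r\ell}}+L\abs{\psi_\ell}\sum_r\abs{w_r}\big)/\abs{1-\lambdan_\ell}$ closes the induction. The base case is included, using $\abs{(\Phi^0\knoi)_r}\leq\norm{\knoi}_B$, and no diameter argument is needed.
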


\begin{proof}
We prove the property by induction on $N$.
Let us first consider $N=1$.
We have
$$
\abs{(\Phi_1 \knoi)_\ell(x,\xi)} \leq C_1 \abs{x-\sigmanoi_\ell(x,\xi)} \norm{\knoi}_B,
$$
with $C_1=\max_{\ell} \sum_{r} \abs{\mnoi_{r\ell}} \geq 0$.
Similarly,
$$
\abs{(\Phi_2 \knoi)_j(x,\xi)} \leq C_2 \abs{x-\sigmanoi_\ell(x,\xi)} \norm{\knoi}_B,
$$
with $C_2=\max_{\ell, (x,\xi), s} \abs{s-\sigmanoi_i(s,\zetanoi_\ell(s;x,\xi))} \sum_{r} \abs{w_r} \abs{\psi_\ell} \geq 0$.
Finally, we have
\begin{equation}\label{estim x minus sigmax}
\abs{x-\sigmanoi_\ell(x,\xi)} \leq C_3 \abs{x-\xi},
\end{equation}
for some $C_3>0$ that does not depend on $\ell,x,\xi$ (see \eqref{estim sigmaj-x}).
This proves the property for $N=1$.

Let us now assume that the property holds for $N$ and let us prove it for $N+1$.
We have
$$
\abs{(\Phi_1 \Phi^N \knoi)_\ell(x,\xi)} \leq 
\int_{[\sigmanoi_\ell(x,\xi),x]} \sum_{r=1}^{n-1} \abs{(\Phi^N \knoi)_r(s,\zetanoi_\ell(s;x,\xi))} \abs{\mnoi_{r \ell}} \, ds,
$$
Using the induction assumption, we get
$$
\abs{(\Phi_1 \Phi^N \knoi)_\ell(x,\xi)} \leq 
C_1 \frac{C^N}{N!}
\norm{\knoi}_B
\int_{[\sigmanoi_\ell(x,\xi),x]} \abs{s-\zetanoi_\ell(s;x,\xi)}^N \, ds.
$$
Similarly, noting that $\eta-\zeta_i(\eta;s,\zetanoi_\ell(s;x,\xi))=s-\zetanoi_\ell(s;x,\xi)$, we get
$$
\abs{(\Phi_2 \Phi^N \knoi)_\ell(x,\xi)} \leq 
C_2 \frac{C^N}{N!}
\norm{\knoi}_B
\int_{[\sigmanoi_\ell(x,\xi),x]} \abs{s-\zetanoi_\ell(s;x,\xi)}^N \, ds.
$$
Now observe that $\abs{s-\zetanoi_\ell(s;x,\xi)}
\leq C_4 \abs{s-\sigmanoi_\ell(x,\xi)}$ for some $C_4>0$ that does not depend on $\ell,s,x,\xi$.
It follows that
$$
\int_{[\sigmanoi_\ell(x,\xi),x]} \abs{s-\zetanoi_\ell(s;x,\xi)}^N \, ds
\leq C_4 \int_{[\sigmanoi_\ell(x,\xi),x]} \abs{s-\sigmanoi_\ell(x,\xi)}^N \, ds
=C_4 \frac{\abs{x-\sigmanoi_\ell(x,\xi)}^{N+1}}{N+1}.
$$
We conclude thanks to the estimate \eqref{estim x minus sigmax}.

\end{proof}

Finally, the estimate \eqref{thm exist kern estimate} can be deduced from the identities $\knoi=F^N(\knoi)-F^N(0)+F^N(0) =\Phi^N(\knoi)-\Phi^N(0) +\sum_{r=0}^N \Phi^r u^0$, combined with the fact that $\Phi^N$ is a contraction and that $u^0$ can be estimated by the right-hand side of \eqref{thm exist kern estimate} (with $s=0$).
This concludes the proof of Theorem \ref{thm exist kern} for $s=0$.

To prove the result for $s \geq 1$ we can argue as in the proof of \cite[Theorem 3.6]{Bre00} and then use an induction argument.

\begin{remark}
The proof above can be adapted to deal with space-dependent systems, i.e. when $\lambda_j$ and $m_{rj}$ depend on $x$.
The additional condition for $k_i$ has to be modified though, but we can for instance consider $k_i(x,0)=f_i(x)$.
Note that we still have explicit formulas for the corresponding $\zeta_j$ and $\sigma_j$.
\end{remark}

\begin{remark}
Our approach can be used to recover existence results in the triangle $\Tau$.
To this end, we simply extend the parameters $\lambda_j$ and $m_{rj}$ to $[a,b] \supset [0,1]$ in a smooth way.
Then, for $a,b$ large enough, the domain $D$ will contain the triangle $\Tau$ (recall \eqref{basic prop D}) and we apply Theorem \ref{thm exist kern} in this $D$.
This approach is different from all the previous ones in the literature, which consisted in adding ``artificial boundary conditions'' at some parts of the boundary of $\Tau$.
Note in addition that extending $\lambda_j$ and $m_{rj}$ outside $[0,1]$ in a smooth way is easier than building artificial boundary conditions that satisfy compatibility conditions associated with the kernel equations.
\end{remark}

\bibliographystyle{amsalpha}
\bibliography{biblio}

\end{document}